\newcommand{\Z}{ {\mathbb Z}   }
\newcommand{\N}{ {\mathbb N}   }
\newcommand{\C}{ {\mathbb C}   }
\newcommand{\T}{ {\mathbb T}   }
\newcommand{\R}{ {\mathbb R}   }
\newcommand{\liealg}[1]{{\rm Lie}\, #1\,}
\newcommand{\lie}[1]{\mathfrak{#1}}
\renewcommand{\t}{\lie{t}}
\newcommand{\g}{\lie{g}}
\newcommand{\SO}[1]{{\rm SO}_{#1}}
\newcommand{\SU}[1]{{\rm SU}_{#1}}
\newcommand{\Aut}[1]{\textrm{Aut}(#1)}
\newcommand{\Graph}[1]{\mathrm{graph}(#1)}
\newcommand{\lp}[2]{L^{#1}(#2)}
\newcommand{\linfty}[1]{\lp{\infty}{#1}}
\newcommand{\means}[2]{\mathfrak{m}(#1)^{#2}}
\newcommand{\hone}[1]{[#1,S^1]}
\newcommand{\im}[1]{\mathrm{Im}\,#1}
\newcommand{\Hom}[1]{\mathrm{Hom}(#1)}
\newcommand{\haarm}[1][]{m_{#1}}
\newcommand{\textem}[1]{{\em #1}}
\newcommand{\reg}[1]{{\rm Reg}\, #1}
\newcommand{\mmap}{\Psi}
\newcommand{\CAd}[1]{{\rm Ad}^*_{#1}}
\newcommand{\cad}[1]{{\rm ad}^*_{#1}}
\renewcommand{\d}{{\rm d}}
\newcommand{\ie}[0]{\hbox{\it i.e.\ }}
\newcommand{\interalia}[0]{\hbox{\it inter alia\ }}
\newcommand{\viz}[0]{\textit{viz.}}
\newcommand \rank {{\rm rank\ }}
\newcommand \supp {{\rm supp\ }}
\newcommand{\set}[1]{\left\{ #1 \right\}}
\newcommand{\universalcover}[1]{\tilde{#1}}
\newcommand{\abeliancover}[1]{\hat{#1}}
\newcommand{\finitecover}[1]{\check{#1}}
\newcommand{\commutator}[1]{#1_{1}}
\newcommand{\commutatorsg}[1]{[#1,#1]}
\newcommand{\Kappa}[0]{\mathcal{K}}
\newcommand{\st}[0]{\,:\,}
\newcommand{\typee}[0]{type (E)}
\newcommand{\Beta}[0]{\textrm{B}}
\newcommand{\cprime}{$'$}
\newcommand{\Arnold}[1]{\ifthenelse{\equal{#1}{}}{Arnol\cprime{}d}{ARNOL\cprime{}D}}
\newcommand{\cA}{\mathcal{A}}
\newcommand{\cM}{\mathcal{M}}
\newcommand{\cN}{\mathcal{N}}
\newcommand{\cO}{\mathcal{O}}
\newcommand \cL {{\mathcal L}}
\newcommand{\calM}{\mathfrak{M}}
\newcommand \cS {{\mathcal S}}
\renewcommand \a {\alpha}
\newcommand\beqa[1]{ \begin{eqnarray} \label{#1}}
\newcommand{\eeqa}{ \end{eqnarray} }
\newcommand{\beqano}{ \begin{eqnarray*} }
\newcommand{\eeqano}{ \end{eqnarray*} }
\renewcommand{\to}[0]{\longrightarrow}
\newcommand{\kp}[2]{\langle #1 , #2 \rangle}
\newcommand{\connectsum}[2]{#1 \# #2}
\newcommand{\avg}[2][]{
  \ifthenelse{\equal{#1}{}}{
    \ifthenelse{\equal{#2}{}}{
      \mathfrak{m}%
    }{%
      \overline{#2}%
    }%
  }{
    \ifthenelse{\equal{#2}{}}{%
      \mathfrak{m}_{#1}%
    }{%
      \mathfrak{m}_{#1}(#2)}}}
\newcommand{\pb}[2]{\left\{ #1,#2 \right\}}
\def\th@defn{%
  \thm@notefont{\bfseries\slshape} 
  \normalfont 
}
\def\th@theorema{%
  \thm@notefont{\bfseries} 
  \itshape 
}
\newtheorem{question}{Question}
\newtheorem*{question*}{Question}
\theoremstyle{theorema}
\newtheorem{theorem}{Theorem}[section]
\newtheorem*{theorem*}{Theorem}
\newtheorem{lemma}{Lemma}[section]
\newtheorem*{lemma*}{Lemma}
\theoremstyle{plain}
\newtheorem{proposition}{Proposition}[section]
\newtheorem*{proposition*}{Proposition}
\theoremstyle{theorema}
\newtheorem*{corollary*}{Corollary}
\theoremstyle{remark}
\newtheorem{remark}{Remark}[section]
\newtheorem*{remark*}{Remark}
\theoremstyle{defn}
\newtheorem{definition}{Definition}[section]
\newtheorem*{definition*}{Definition}
\newtheorem*{example*}{Example}
\newtheorem*{axiom*}{Axiom}
\newcounter{tem}
\renewcommand{\thetem}{\roman{tem}}
\newcommand{\tem}{\addtocounter{tem}{1}(\thetem)\ }
\newenvironment{itemise}[1][6mm]{
\setcounter{tem}{0}

\def\lmi{\leftmargini}
\setlength{\leftmargini}{#1}
\begin{enumerate}}{
\end{enumerate}
\setlength{\leftmargini}{\lmi}}
\newcommand{\inclcomments}[0]{1}
\newcommand{\includecomments}[1]{%
  \ifthenelse{\equal{#1}{}}{%
    \inclcomments{}%
  }{%
    \renewcommand{\inclcomments}[0]{#1}}%
}
\newsavebox{\mcommentbox}
\newenvironment{mcomment}[1][{true}]{%
  \def\includethiscomment{#1}
  \begin{lrbox}{\mcommentbox}%
    \begin{minipage}{\marginparwidth}%
      \raggedright\footnotesize\color{red}}{%
    \end{minipage}%
  \end{lrbox}%
  \marginpar{%
    \ifthenelse{\equal{{\inclcomments}}{{false}}\or\equal{{\includethiscomment}}{{seen}}}{%
      }{\usebox{\mcommentbox}}
  }%
}
\definecolor{gray}{rgb}{0.6,.6,0.6}
\definecolor{green}{rgb}{0,1,0}
\begin{document}

\title[Weak Liouville-\Arnold{!} Theorems]{Weak Liouville-\Arnold{!} Theorems \& Their Implications}
\author{Leo T. Butler \& Alfonso Sorrentino}
\address{Department of Mathematics, Central Michigan University, Mt. Pleasant, MI, USA, 48859}
\email{l.butler@cmich.edu}
\address{Dipartimento di Matematica,
 Università degli Studi ``Roma Tre'',
 Largo S. Leonardo Murialdo 1,
 00146 Rome, Italy}
\email{sorrentino@mat.uniroma3.it}
\date{\today}
\subjclass[2000]{37J50 (primary); 37J35, 53D12, 70H08 (secondary)}
\thanks{L. B. thanks K. F. Siburg for helpful discussions and the MFO for its hospitality. A. S. would like to acknowledge the support of the {\it Herchel-Smith foundation} and the French ANR project {\it Hamilton-Jacobi et th\'eorie KAM faible}.}
\begin{abstract}
  This paper studies the existence of invariant smooth Lagrangian graphs for Tonelli Hamiltonian systems with symmetries. In particular, we consider Tonelli Hamiltonians with $n$ independent but not necessarily involutive constants of motion and obtain two theorems reminiscent of the Liouville-\Arnold{} theorem.
Moreover, we also obtain results on the structure of the configuration spaces of such systems that are reminiscent of results on the configuration space of completely integrable Tonelli Hamiltonians.
\end{abstract}
\maketitle

\section{Introduction}
\label{sec:introduction}

\begin{mcomment}[seen]
  I added an \verb#mcomment# environment for marginal comments. You
  can turn comments off, without deleting them, by changing
  \verb#\includecomments{true}# to \verb#\includecomments{false}# on
  line 3. You can stop the display of a comment by replacing \verb#\begin{mcomment}#
    with \verb#\begin{mcomment}[seen]#.

  There is no need to highlight your changes in red, version control
  (\verb#subversion#) lets me see them.

  You can delete this comment or mark it as seen.
\end{mcomment}

In the study of Hamiltonian systems, a special role is played by \textem{invariant Lagrangian manifolds}. 
These objects arise quite naturally in many physical and geometric problems and share a deep relation with the dynamics of the system and with the Hamiltonian itself.
Our concern in this paper is with Hamiltonian systems that possess invariant Lagrangian \textem{graphs}, or more precisely, with conditions that imply the existence of such graphs.
Specifically, we address the following question:
\begin{question}
\label{mainquestion}
When does a Hamiltonian system possess an invariant smooth Lagrangian graph?
\end{question}

It is natural to expect that ``sufficiently'' symmetric systems ought to possess an abundance of invariant Lagrangian graphs.
Inspired by the results in \cite{Sor09}, this paper demonstrates, with two different notions of symmetry, conditions that imply the existence of such graphs.
This approach to Question \ref{mainquestion} leads us to two theorems which in important aspects mirror the classical theorem
of Liouville-\Arnold{}. 
While on the one hand the first of these theorems can be seen as a (non-trivial) generalisation of the main theorem in \cite{Sor09} (see Remark \ref{remmaintheorem} for more details), on the other hand our present analysis extends well beyond, providing a much deeper insight into the nature and the properties of the so-called {\it weakly-integrable systems}. There is a large literature on the structure of the configuration space of a completely integrable
Tonelli Hamiltonian, see \interalia \cite{MR556099,MR897007,MR1428676,MR2136534}; in pursuing the analogy between the present paper's \textem{weak} Liouville-\Arnold{} theorems
and the classical theorem, we have proven two results on the topological structure of the configuration spaces of weakly-integrable
systems. Indeed, we believe that the following is an interesting question
\begin{question}
  \label{mainquestion-config}
  If a Hamiltonian system possesses an invariant smooth Lagrangian graph, what is true of its configuration space?
\end{question}
\noindent
To address each of these questions, we use two notions of ``symmetry'' in this paper. Let us introduce those:

\subsection*{Classical Symmetries}
Let us recall some terminology used to describe classical symmetries.
The cotangent bundle, $T^*M$, of a smooth manifold $M$ is equipped with a canonical Poisson structure $\pb{\cdot}{\cdot}$.
Given a smooth function $H$, the vector field $X_H = \pb{H}{\,}$ is a \textem{Hamiltonian system} with Hamiltonian $H$.
The skew-symmetry of $\pb{\cdot}{\cdot}$ implies that if $\pb{H}{F} \equiv 0$, then the vector field $X_H$ is tangent to the level sets of $F$; and, the Jacobi identity implies it commutes with $X_F$.
In such a situation, these Hamiltonians are said to \textem{Poisson-commute}, or be \textem{in involution}, and $F$ is said to be a \textem{constant of motion}, or \textem{first integral}.
The Liouville-\Arnold{} theorem describes the situation when $H$ has $n$ independent, Poisson commuting integrals.

\begin{theorem*}[Liouville-\Arnold{}] Let $(V,\omega)$ be a symplectic manifold with $\dim V=2n$ and let $H: V \longrightarrow \R$ be a proper Hamiltonian. Suppose that there exists $n$ integrals of motion $F_1, \ldots, F_n : V \longrightarrow   \R$ such that:
\begin{itemise}
\item $F_1,\;\ldots,\; F_n$ are $C^2$ and functionally independent almost everywhere on $V$;
\item $F_1,\ldots, F_n$ are pairwise in involution, \ie $\pb{F_i}{F_j}=0$ for all $i,j=1,\ldots n$.
\end{itemise}
Suppose the non-empty regular level set $\Lambda_{a} := \set{F_1=a_1,
  \ldots, F_n=a_n}$ is connected. Then $\Lambda_a$ is an $n$-torus, $\T^n$
and there is a neighbourhood $\cO$ of $0 \in H^1(\Lambda_a;\R)$ such
that for each $c' \in \cO$ there is a unique smooth Lagrangian
$\Lambda_{c'}$ that is a graph over $\Lambda_a$ with cohomology class
$c'$. Moreover, the flow of $X_H|\Lambda_{c'}$ is a rigid rotation.
\end{theorem*}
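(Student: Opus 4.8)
The plan is to follow the classical two-stage strategy: first identify $\Lambda_a$ with a torus by producing a transitive action of commuting complete flows, and then build action--angle coordinates in a tubular neighbourhood so as to parametrise the nearby invariant tori by their cohomology classes.

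First I would extract the structure carried by $\Lambda_a$. Since $a$ is a regular value, $\d F_1,\ldots,\d F_n$ are pointwise independent on $\Lambda_a$, so $\Lambda_a$ is a smooth $n$-manifold; nondegeneracy of $\omega$ converts them into $n$ pointwise independent Hamiltonian fields $X_{F_1},\ldots,X_{F_n}$. The involutivity $\pb{F_i}{F_j}=0$ makes each $X_{F_j}$ tangent to the level set $\Lambda_a$, and by the Jacobi-identity remark recalled above it makes the fields commute, $[X_{F_i},X_{F_j}]=0$. Because each $F_i$ is an integral, $X_{F_i}(H)=\pb{F_i}{H}=0$, so $H$ is constant along each flow; as the $X_{F_i}$ span $T\Lambda_a$, the function $H$ is locally constant and hence constant on the connected set $\Lambda_a$, giving $\Lambda_a\subseteq H^{-1}(c)$ for a single value $c$. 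Properness of $H$ makes $H^{-1}(c)$ compact, so the closed subset $\Lambda_a$ is compact; the tangent fields are therefore complete and their joint flow is a genuine $\R^n$-action. Its orbits are open (the fields span) and hence also closed, so connectedness forces the action to be transitive; the stabiliser is a discrete subgroup $\Gamma$, and compactness forces $\Gamma\cong\Z^n$ to be a full-rank lattice, whence $\Lambda_a\cong\R^n/\Gamma\cong\T^n$. Finally $\Lambda_a$ is Lagrangian, since $\dim\Lambda_a=n=\tfrac12\dim V$ and $\omega(X_{F_i},X_{F_j})=\pb{F_i}{F_j}=0$ force $\omega|_{\Lambda_a}=0$.

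Next I would pass to a neighbourhood. Weinstein's Lagrangian neighbourhood theorem identifies a neighbourhood of the Lagrangian torus $\Lambda_a$ symplectically with a neighbourhood of the zero section of $T^*\T^n$; the argument above, applied to $b$ near $a$, shows that the regular level sets $\Lambda_b$ remain Lagrangian tori and foliate this neighbourhood. Writing $\omega=\d\lambda$ there, I would define the action variables $I_j(b)=\tfrac{1}{2\pi}\oint_{\gamma_j}\lambda$ by integrating over a basis $\gamma_1,\ldots,\gamma_n$ of $H_1(\Lambda_b;\Z)$ transported continuously from $\Lambda_a$; these are well defined because $\Lambda_b$ is Lagrangian, so the periods depend only on homology classes. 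The map $b\mapsto I(b)$ is a local diffeomorphism -- the quantitative heart of the theorem -- and in the resulting coordinates $\Lambda_b$ is exactly the graph of the closed $1$-form of cohomology class $I(b)\in\R^n\cong H^1(\Lambda_a;\R)$. Setting $c'=I(b)$ and inverting produces, for each $c'$ in a neighbourhood $\cO$ of $0$, an invariant level set $\Lambda_{c'}$ presented as a Lagrangian graph of the prescribed class; uniqueness among the invariant (equivalently, $F$-level) graphs follows from the injectivity of $b\mapsto I(b)$, so that each class is realised by exactly one level set.

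For the dynamics, in action--angle coordinates $(\theta,I)$ one has $\omega=\sum_j \d I_j\wedge\d\theta_j$, and each $F_i$, being constant on the tori $\{I=\mathrm{const}\}$, is a function of $I$ alone; hence $H=h(I)$ on the neighbourhood. Then $\dot I_j=0$ and $\dot\theta_j=\partial h/\partial I_j$, so the restricted flow on $\Lambda_{c'}$ is $\theta\mapsto\theta+t\,\nu(c')$ with constant frequency $\nu(c')=\nabla h(I)$, a rigid rotation. I expect the construction and regularity of the action map to be the principal obstacle: proving that the periods $I_j(b)$ are smooth, that $b\mapsto I(b)$ is a local diffeomorphism, and that conjugate angle coordinates can be chosen to linearise the flow. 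The torus identification and the Lagrangian property are comparatively direct; the delicate part is converting the qualitative foliation-by-tori picture into the precise cohomological parametrisation with uniqueness and linear dynamics.
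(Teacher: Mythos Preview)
The paper does not give its own proof of this statement: the Liouville--\Arnold{} theorem is quoted as a classical result in the introduction, and the remark immediately following it simply points to the literature (``There are numerous proofs of this theorem in its modern formulation, see \textit{inter alia} \ldots''). So there is no in-paper proof to compare against.

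That said, your sketch is the standard two-stage argument one finds in those references (e.g.\ Arnold's book): first obtain a transitive $\R^n$-action on $\Lambda_a$ by commuting complete Hamiltonian flows and identify the stabiliser with a full-rank lattice, then build action--angle coordinates via periods of the Liouville form and read off linear dynamics. The steps are in the right order and the logical dependencies are sound; in particular, your use of properness of $H$ to force compactness of $\Lambda_a$ (via $H|_{\Lambda_a}=\textrm{const}$ and $\Lambda_a$ closed in $H^{-1}(c)$) correctly handles the slightly non-standard hypothesis in the paper's formulation. You also correctly flag the genuinely delicate point --- smoothness and local invertibility of the period map $b\mapsto I(b)$ and the existence of conjugate angle coordinates --- as the place where real work is required; everything else is routine. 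One small comment on uniqueness: as stated in the theorem the claim is uniqueness among smooth Lagrangian graphs over $\Lambda_a$ of a given cohomology class, and your argument only gives uniqueness among the $F$-level sets. To get the full statement you would invoke that, in action--angle coordinates $(\theta,I)$ on $T^*\T^n$, a Lagrangian graph is the graph of a closed $1$-form, hence $I=c'+du(\theta)$, and an additional convexity/invariance argument (or simply the observation that in these coordinates a Lagrangian \emph{section} of class $c'$ is $\{I=c'\}$ up to an exact form, and invariance under the torus flows forces $du=0$) pins it down.
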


\begin{remark*}
  There are numerous proofs of this theorem in its modern formulation, see
  \interalia \cite{Mineur,Arnoldbook,MR1185565,MR596430,MR0451301}.
  The map $F:=(F_1,\ldots, F_n)$ is referred to as an \textem{integral
    map}, \textem{first-integral map} or a \textem{momentum map}.
The invariance of the level set $\Lambda_a$  simply follows from $F$ being an integral of motion; the fact that it is a Lagrangian torus and that the Hamiltonian flow is conjugate to a rigid rotation, strongly relies on these integrals being pairwise in involution and independent.
\end{remark*}

Inspired by the Liouville-\Arnold{} theorem, we address Question \ref{mainquestion} in the case of systems that possess a sufficiently large number of symmetries. Let us recall the definition of a \textem{weakly integrable} system.

\begin{definition}[Weak integrability \cite{Sor09}]
  \label{def:1}
  Let $H \in C^2(T^*M)$. If
  there is a $C^2$ map $F : T^*M^n \longrightarrow \R^n$ whose singular set is nowhere
  dense, and $F$ Poisson-commutes with $H$, then we say that $H$ is
  \textem{weakly integrable}.
\end{definition}

\subsection{Results}
\label{sec:results}

Recall that a Hamiltonian $H\in C^2(T^*M)$ is \textem{Tonelli} if it is fibrewise strictly convex and enjoys fibrewise superlinear growth\footnote{Section \ref{sec:am-theory} provides a synopsis of Mather theory and Fathi's weak KAM theory.}.
This paper's first result is

\begin{theorem}[Weak Liouville-\Arnold{}]
\label{mainthm}
Let $M$ be a closed manifold of dimension $n$ and $H:T^*M \longrightarrow \R$ a weakly integrable Tonelli Hamiltonian with integral map
$F: T^*M \longrightarrow \R^n$. If for some cohomology class $c\in H^1(M;\R)$ the corresponding Aubry set $\cA^*_c \subset \reg{F}$, then there exists an open neighborhood $\cO$ of $c$ in $H^1(M;\R)$ such that the following holds.
\begin{itemise}
\item For each $c'\in \cO$ there exists a smooth invariant Lagrangian graph $\Lambda_{c'}$ of cohomology class $c'$, which admits the structure of a smooth $\T^d$-bundle over a base $B^{n-d}$ that is parallelisable, for some $d>0$. 
\item The motion on each $\Lambda_{c'}$ is Schwartzman strictly ergodic (see \cite{FGS}), \ie all invariant probability measures have the same rotation vector and the union of their supports equals $\Lambda_{c'}$. In particular, all orbits  are conjugate by a smooth diffeomorphism isotopic to the identity.
\item Mather's $\alpha$-function $\alpha_H: H^1(M;\R) \longrightarrow \R $ is differentiable at all  $c'\in\cO$ and its convex conjugate
$\beta_H: H_1(M;\R) \longrightarrow \R$ is differentiable at all  rotation vectors $h\in\partial \alpha_H(\cO)$, where $\partial \alpha_H(\cO)$ denotes the set of subderivatives of $\alpha_H$ at some element of $\cO$.
\end{itemise}
\end{theorem}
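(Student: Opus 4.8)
The plan is to show that the hypothesis forces the Aubry set $\cA^*_c$ to be itself a smooth invariant Lagrangian graph over all of $M$, to exhibit its torus fibration, and then to obtain the dynamical and convex-analytic conclusions from Mather and weak KAM theory. First I would localise near the Aubry set. As $\reg{F}$ is open and contains $\cA^*_c$, and each component of $F$ Poisson-commutes with $H$, the fibres of $F$ are $X_H$-invariant and, over $\reg{F}$, are smooth $n$-dimensional submanifolds foliating a neighbourhood of $\cA^*_c$. Since a continuous first integral is constant along orbits, hence along chain-transitive sets, $F$ is constant on each static class of the Aubry set, so every static class lies inside this smooth invariant foliation; this trades the a priori merely Lipschitz Aubry set for an ambient smooth invariant structure transverse to the vertical fibres of $T^*M$.

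The key step is then a regularity upgrade. By Mather's graph theorem $\cA^*_c$ is a Lipschitz graph over its projection, contained in the graph $\{(x,\eta_c+du_c(x))\}$ of a weak KAM solution $u_c$, with $\eta_c$ a smooth closed $1$-form of class $c$. Because this Lipschitz graph is constrained to lie in the smooth regular level sets of $F$, which meet the vertical fibres transversally, the calibrated curves over a base point cannot bifurcate; I would use this to exclude the shocks of $u_c$ and conclude that $u_c$ is $C^1$, and then, by bootstrapping in the classical Hamilton--Jacobi equation $H(x,\eta_c+du_c)=\alpha_H(c)$, smooth. Its graph is therefore a globally defined smooth section over $M$, it is Lagrangian because the graph of a closed $1$-form always is, and it is fully invariant because $X_H$ is tangent to the graph of a classical solution. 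This exhibits $\cA^*_c$ as a smooth invariant Lagrangian graph $\Lambda_c\cong M$ of class $c$. The fibration is read off from $f:=F|_{\Lambda_c}$, which has constant rank $n-d$: its fibres are compact connected isotropic $d$-dimensional submanifolds which, carrying a transitive Abelian symmetry in the spirit of the Liouville--\Arnold{} and Mishchenko--Fomenko theories of (non-commutative) integrability, are tori $\T^d$; they fibre $\Lambda_c\cong M$ over the base $B^{n-d}$ parametrised by the $n-d$ independent integrals, whose globally defined, pointwise-independent differentials furnish a parallelisation. Here $d>0$ because, were $f$ of full rank, $F$ would be a local coordinate system and, being constant on orbits, would force the Aubry dynamics to be trivial, contradicting its recurrence.

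With $\Lambda_c=\cA^*_c$ realised as a union of static classes supported on these isotropic tori, the framework of \cite{FGS} applies to give that the flow on $\Lambda_c$ is Schwartzman strictly ergodic: all invariant probability measures share a single rotation vector, their supports cover $\Lambda_c$, and the orbits are conjugate by identity-isotopic diffeomorphisms. Uniqueness of the rotation vector of the Mather set is precisely the differentiability of $\alpha_H$ at $c$. To obtain the neighbourhood $\cO$ I would use the upper semicontinuity of $c'\mapsto\cA^*_{c'}$: since $\reg{F}$ is an open neighbourhood of $\cA^*_c$, there is an open $\cO\ni c$ with $\cA^*_{c'}\subset\reg{F}$ for all $c'\in\cO$, and the whole construction applies verbatim to each $c'$, producing $\Lambda_{c'}$ and the differentiability of $\alpha_H$ throughout $\cO$. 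Finally, distinctness of the graphs makes $c'\mapsto\nabla\alpha_H(c')$ injective, hence $\alpha_H$ strictly convex on $\cO$, and by the Fenchel duality between $\alpha_H$ and $\beta_H$ this yields the differentiability of $\beta_H$ at every $h\in\partial\alpha_H(\cO)$.

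I expect the main obstacle to be the regularity upgrade of the second paragraph: converting the qualitative inclusion $\cA^*_c\subset\reg{F}$ into the quantitative statement that the Lipschitz weak KAM graph is a globally defined smooth section of $T^*M$, and simultaneously organising its common level sets into an isotropic torus bundle over a parallelisable base. This is exactly where weak KAM regularity theory must be reconciled with the (non-commutative) integrability picture, and handling non-involutive integrals along the Lagrangian graph is the delicate point; the convex-analytic conclusions are comparatively soft once the graph and its fibration are in hand.
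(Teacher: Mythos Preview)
Your proposal has two genuine gaps.

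First, your mechanism for the torus fibration does not work. You read off the fibres from $f=F|_{\Lambda_c}$ and invoke a ``transitive Abelian symmetry in the spirit of Liouville--\Arnold{} and Mishchenko--Fomenko''. But the $F_i$ are \emph{not} in involution --- that is precisely the weak-integrability setting --- so the restricted vector fields $Y_i=X_{F_i}|_\Lambda$ need not commute, there is no abelian action on the putative fibres, and Mishchenko--Fomenko's completeness hypotheses are absent. Nor is there any reason for $F|_\Lambda$ to have constant rank. The paper's argument is quite different and this is its main new idea: since $H$ Poisson-commutes with each $F_i$, the flow of $Y=X_H|_\Lambda$ lies in the \emph{centre} $Z$ of the (generally non-abelian) group $\Gamma$ generated by the flows of the $Y_i$. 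Declaring $\{Y_i\}$ orthonormal defines a Riemannian metric that $Z$ preserves; by Montgomery--Zippin the closure $\bar Z$ is a compact connected abelian Lie group, hence a torus $\T^d$ with $d>0$. Since $\Gamma$ acts transitively on $\Lambda$ (Stefan--Sussman) and $\bar Z$ centralises $\Gamma$, all $\bar Z$-orbits are conjugate, $\bar Z$ acts freely, and $\Lambda$ becomes a principal $\T^d$-bundle. The torus fibres are these $\bar Z$-orbits, not level sets of $f$, and the parallelisation of $B=\Lambda/\T^d$ comes from the remaining $Y_i$, which are $\T^d$-invariant and descend.

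Second, you pass to a neighbourhood $\cO$ via upper semicontinuity of $c'\mapsto\cA^*_{c'}$, but the Aubry set is \emph{not} upper semicontinuous in general (Bernard has counterexamples). The paper first shows $\cA^*_c=\cN^*_c$ --- the projected Aubry set is all of $M$, so the critical subsolution is unique --- and then uses upper semicontinuity of the Ma\~n\'e set, which does hold. A smaller point: your ``regularity upgrade'' via excluding shocks of $u_c$ is not the operative mechanism. What forces $\cA^*_c$ to be a full smooth $n$-dimensional graph is that $\cA^*_c$ is invariant under every flow $\Phi^t_{F_i}$ (Proposition~\ref{prop:invariance}), so the $n$ independent $X_{F_i}$ are tangent to it, giving $\rank T^G\cA^*_c\geq n$ everywhere; smoothness then comes from $\cA^*_c$ sitting inside a regular level set of $F$ (Proposition~\ref{Prop1Sor09} and Remark~\ref{RemSor09}).
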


\begin{remark*}
  \begin{itemise}[0mm]
  \tem We named this theorem as such because it drops the involutivity hypothesis of the classical theorem and still obtains results that are quite analogous.
  \tem The theorem remains true if one replaces the hypothesis $\cA^*_c \subset \reg{F}$ with $\cM^*_c \subset \reg{F}$, where $\cM^*_c$ denotes the {\it Mather set}. 
  \tem We conjecture that {weak integrability  implies that $\dim H^1(M;\R) \leq \dim M$ with equality if and only if $M$ is a torus} even without the a priori assumption $\cA^*_c \subset \reg{F}$. 
    \end{itemise}
\end{remark*}

\begin{remark}\label{remmaintheorem}
This theorem extends and improves  the main result in \cite{Sor09} in many non-trivial respects. 
 \begin{itemise}
	\item First of all, we provide a description of the topological structure of these invariant Lagrangian graphs $\Lambda_{c'}$, showing that they admit the structure of a smooth $\T^d$-bundle over a parallelisable base.
	\item Then, we prove that each $\Lambda_{c'}$ has a well-defined rotation vector, which implies the differentiability of Mather's $\alpha$ function.
	\item Moreover, we prove that the flow of $X_H|\Lambda_{c'}$ is a
  	rotation on the $\T^d$ fibres of $\Lambda_{c'}$ with rotation vector
  	$h_{c'}=\partial \alpha_H(c')$, where $\partial \alpha_H(c')$ is the
  	derivative of $\alpha_H$ at $c'$. This is analogous to what happens in the classical Liouville-\Arnold{}
  	theorem, where the rotation vector is the derivative of $H$ at $c'$.
  \end{itemise}
\end{remark}

Let us then pursue the analogy with the Liouville-\Arnold{} theorem and
complete integrability and turn now to the implications of Theorem
\ref{mainthm} for the topology of the configuration space $M$. 
Recall that a smooth manifold is \textem{irreducible} if,
when written as a connect sum, one of the summands is a standard
sphere.
In $3$-manifold topology, a central role is played by those closed
$3$-manifolds which contain a non-separating \textem{incompressible}
surface, or dually, which have non-vanishing first Betti number. Such
manifolds are called \textem{Haken}; it is an outstanding conjecture
that every irreducible $3$-manifold with infinite fundamental group
has a finite covering that is Haken \cite[Questions
1.1--1.3]{MR739142}. This conjecture is implied by the 
virtually fibred conjecture \cite{MR2399130}. Given the proof of the
geometrisation conjecture, the virtual Haken conjecture is proven for
all cases but hyperbolic $3$-manifolds. Thurston and Dunfield have
shown there is good reason to believe the conjecture is true in this
case \cite{MR1988291}.

\begin{theorem}
  \label{maincor}
  Assume the hypotheses of Theorem \ref{mainthm}. Then $M$ is
  diffeomorphic to a trivial $\T^d$-bundle over a parallelisable base
  $B$ such that all finite covering spaces of $B$ have zero first
  Betti number. Therefore
  \begin{itemise}
    \item $\dim M \leq 3$ implies that $M$ is diffeomorphic to a torus;
    \item $\dim M=4$ implies, assuming the virtual Haken conjecture,
      that $M$ is diffeomorphic to either $\T^4$ or $\T^1 \times E$,
      where $E$ is an orientable $3$-manifold finitely covered by
      $S^3$.
    \item If $\dim H^1(M;\R) \geq \dim M$, then $\dim H^1(M;\R) = \dim M$
      and $M$ is diffeomorphic to $\T^n=\R^n/\Z^n$.
  \end{itemise}
\end{theorem}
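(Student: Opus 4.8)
The plan is to transport the fibration of Theorem \ref{mainthm} from the Lagrangian graph to $M$ itself, upgrade it across finite covers, and then read off the topology. First I would note that since $\Lambda_{c'}$ is a graph, the bundle projection $T^*M\to M$ restricts to a diffeomorphism $\Lambda_{c'}\to M$; pushing the structure of Theorem \ref{mainthm} through it presents $M$ as a smooth $\T^d$-bundle $\pi\colon M\to B$ over a parallelisable base $B=B^{n-d}$ with $d>0$. The fibrewise-rotation description of $X_H|\Lambda_{c'}$ in Remark \ref{remmaintheorem} makes the fibres the orbits of a free $\T^d$-action, so $\pi$ is principal; a global section, produced either by the closed $1$-form whose graph is $\Lambda_{c'}$ or by a single leaf of the Aubry--Mather family, then trivialises $\pi$ and yields $M\cong B\times\T^d$. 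Triviality is only actually needed to obtain the product form $\T^1\times E$ in the four-dimensional case, since there an $S^1$-bundle over $B^3$ can carry a nonzero (torsion) Euler class.

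The heart of the matter is the claim that every finite cover $\check B\to B$ has vanishing first Betti number. Here I would use that weak integrability together with the Tonelli and Aubry-set hypotheses all pull back along any finite cover $\check M\to M$, so that Theorem \ref{mainthm} applies verbatim to $\check M=\T^d\times\check B$. Its dynamics is again a fibrewise rotation, so every rotation vector lies in the image $V$ of $H_1(\text{fibre};\R)\to H_1(\check M;\R)$, while classes pulled back from $H^1(\check B;\R)$ annihilate $V$ and hence have zero Schwartzman pairing with the (unique, by strict ergodicity) rotation vector. The technical core, which I expect to be the genuine obstacle, is to promote this to the statement that the rotational directions exhaust $H^1(\check M;\R)$ modulo the base, i.e. $b_1(\check B)=0$: this is where the differentiability of $\alpha_H$ and $\beta_H$ furnished by Theorem \ref{mainthm} must be combined with the fact that any $b_1(\check B)>0$ would, by re-applying the weak Liouville--\Arnold{} mechanism on $\check M$, produce an extra cohomology direction realisable by torus rotations, contradicting that $\check B$ is the rotation-free quotient.

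Granting the structural statement, the three consequences reduce to low-dimensional topology. For $\dim M\le 3$ I would enumerate the admissible pairs $(d,B^{n-d})$: whenever $0<d<n$ the base is a closed parallelisable manifold of dimension $1$ or $2$, hence $S^1$, $\T^2$ or the Klein bottle, each of which has a finite cover with $b_1\neq 0$ and is therefore excluded; only $d=n$ survives, giving $B=\mathrm{pt}$ and $M\cong\T^n$. For $\dim M=4$ the same enumeration kills $d=2,3$, leaves $M\cong\T^4$ when $d=4$, and when $d=1$ gives $M\cong\T^1\times E$ with $E=B^3$ a closed orientable (equivalently, parallelisable) $3$-manifold all of whose finite covers have $b_1=0$. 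For the last item, the Leray--Serre sequence of $\T^d\to M\to B$ together with $b_1(B)=0$ gives $\dim H^1(M;\R)\le d\le n$, so the hypothesis $\dim H^1(M;\R)\ge n$ forces $d=n$, whence $B=\mathrm{pt}$ and $M\cong\T^n$.

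The remaining delicate point is the identification of $E$ in the four-dimensional case, which I would carry out through its prime decomposition. An $S^2\times S^1$ summand gives $b_1(E)\ge 1$, so every prime factor is irreducible. An irreducible summand with infinite $\pi_1$ has, by the virtual Haken conjecture in the form stated in the excerpt, a finite cover of positive first Betti number, which I would spread to a genuine finite cover of $E$ with $b_1>0$, contradicting the hypothesis; propagating positive virtual $b_1$ from a single connected summand to the whole connected sum is the fiddly step. So every prime factor has finite fundamental group; and if two or more such factors occurred, $\pi_1(E)$ would be a nontrivial free product of finite groups, hence infinite and virtually free, and an Euler-characteristic count would exhibit a finite-index free subgroup of rank $\ge 1$, again yielding a finite cover with $b_1>0$. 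Therefore $E$ is a single irreducible factor with finite $\pi_1$, so by elliptisation it is a spherical space form, finitely covered by $S^3$, as required.
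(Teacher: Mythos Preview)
Your overall architecture is close to the paper's, but two load-bearing steps are not actually carried out, and one of them is wrong as stated.

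\textbf{Triviality of the $\T^d$-bundle.} Your sentence ``a global section, produced either by the closed $1$-form whose graph is $\Lambda_{c'}$ or by a single leaf of the Aubry--Mather family, then trivialises $\pi$'' does not work. The closed $1$-form $\eta_{c'}$ is a section of $T^*M\to M$, not of $M\to B$; likewise each $\Lambda_{c'}$ is a copy of $M$, not a section of the principal bundle over $B$. Nothing in Theorem~\ref{mainthm} hands you a section of $M\to B$. The paper proves triviality by an honest obstruction-theoretic argument: it first establishes that the fibre inclusion $\T^d\hookrightarrow M$ is injective on $H_1$ (equivalently $d=b_1(M)$, see below), and then shows via the Leray--Serre spectral sequence that this forces the Chern class in $H^2(B;H_1(\T^d))$ to vanish. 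You need some argument of this type; there is no free section.

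\textbf{The equality $d=b_1(M)$ and $b_1(\check B)=0$.} You correctly identify this as the ``technical core'' and then do not supply it. The paper's mechanism is specific and not guessable from the statement: it first proves $H_1(B;\R)=0$ by noting that $\partial\alpha_H:\cO\to H_1(M;\R)$ is a homeomorphism onto an open set while every rotation vector lies in the kernel of $\pi_{c,*}:H_1(M;\R)\to H_1(B;\R)$, so this kernel contains an open set and hence is everything. For the sharper fact $d=b_1(M)$, the paper perturbs $H$ to an auxiliary Tonelli Hamiltonian $G=H+F^*\alpha$ chosen so that the vertical part of $X_G|\Lambda_c$ generates a dense subgroup of the $\T^d$-fibre for generic $c$; Lemma~\ref{le:unique-rotn-vec} then gives differentiability of $\alpha_G$ and hence injectivity of $\partial\alpha_G:\cO\to H_1(M;\R)$, which is impossible if $d>b_1(M)$. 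For finite covers, the paper shows $\check d=d$ because the framing $X_{F_i}|\Lambda_c$ pulls back through the local symplectomorphism $\Kappa$, and then reruns the same $\partial\alpha$ argument upstairs. Your sketch (``re-applying the weak Liouville--\Arnold{} mechanism on $\check M$'') gestures at this but omits both the open-image argument for $b_1(B)=0$ and the auxiliary-Hamiltonian trick for $d=b_1(M)$.

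\textbf{Minor comparisons.} Given the structural statement, your treatment of (i) is fine and matches the paper. Your argument for (iii) via $b_1(M)\le d\le n$ is actually cleaner than the paper's, which instead shows the $\Lambda_{c'}$ foliate an open set, invokes Proposition~\ref{prop2Sor09} to force genuine involution there, and then cites the classical Liouville--\Arnold{} theorem; but your version only works once $b_1(B)=0$ is in hand. For (ii), your prime-decomposition argument is a reasonable alternative to the paper's (which routes through Kneser, a lemma of Boileau--Wang on non-perfect summands, and Stallings' theorem), but note that your ``spread a finite cover with $b_1>0$ from one summand to the connected sum'' step needs a Kurosh-type argument that you have only flagged as ``fiddly'' and not supplied.
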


\subsection*{Non-classical Symmetries}

In the second part of this article, we investigate the case in which the system's
symmetries are not classical and do not come from conserved
quantities, but are induced by invariance under the action of an
amenable Lie group on the universal cover of the manifold. This action
need not descend to the quotient and is generally only evident in
statistical properties of orbits. In particular, these symmetries may
only manifest themselves in the structure of the action-minimizing
sets.

Recall that a topological group is
\textem{amenable} if it admits a left-invariant, finitely additive,
Borel probability measure. Due to the Levi decomposition, an amenable
Lie group is a semi-direct product of its solvable radical and a
compact subgroup. A solvable Lie group is said to be
\textem{exponential} or \textem{\typee{}} if the exponential map of
the Lie algebra is surjective; we will say an amenable Lie group is of
\typee{} if its radical is of \typee. For each bi-invariant $1$-form
$\phi$ on the simply-connected amenable Lie group $G$ with lattice subgroup $\Gamma$,
let $\Lambda_c = \Gamma\cdot\Graph{\phi} \subset T^*(\Gamma \backslash G)$
be the Lagrangian graph of cohomology class $c$. The union of such
graphs is a submanifold $\cM \subset T^*(\Gamma \backslash G)$ naturally
diffeomorphic to $H^1(\Gamma \backslash G;\R) \times \Gamma \backslash
G$ and this diffeomorphism sends $\Lambda_c$ to $\set{c} \times \Gamma
\backslash G$.

\begin{theorem}
  \label{thm:2}
  Let $G$ be a simply-connected amenable Lie group and let $\Gamma \lhd G$ be
  a lattice subgroup, $M=\Gamma\backslash G$ and $H$ be induced by a
  left-invariant $C^r$ Tonelli Hamiltonian on $T^*G$. Then
  \begin{itemise}
    \item for all $c \in H^1(M;\R)$, the Mather set $\cM^*_c(H)$ equals
    the Lagrangian graph $\Lambda_c$;
    \item the flow of $X_H | \Lambda_c$ is a right-translation by a
    $1$-parameter subgroup of $G$;
    \item the motion on $\Lambda_c$ is Schwartzman strictly ergodic;
    \item Mather's $\alpha$ function $\alpha_H : H^1(M;\R) \to \R$ is $C^r$.
  \end{itemise}
\end{theorem}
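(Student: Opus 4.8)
The plan is to pull everything back to the Lie algebra via the left-trivialisation $T^*G \cong G \times \g^*$, under which the left-invariant Hamiltonian becomes a function $h$ of $\mu \in \g^*$ alone and the associated Lagrangian a function $\ell$ of the body velocity $w \in \g$. A bi-invariant $1$-form $\phi$ corresponds to an element $\mu_0 \in \g^*$ fixed by the coadjoint action, equivalently $\cad{\xi}\mu_0 = 0$ for all $\xi \in \g$, equivalently $\mu_0$ annihilates $[\g,\g]$; these are exactly the closed left-invariant $1$-forms, so by the identification of $H^1(M;\R)$ with bi-invariant forms recalled above each class $c$ is represented by a unique such $\mu_0 = \mu_0(c)$, and the correspondence $c \mapsto \mu_0(c)$ is the linear isomorphism $H^1(M;\R)\to(\g/[\g,\g])^*\hookrightarrow\g^*$. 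First I would read the dynamics off the Euler--Arnold (Lie--Poisson) equations $\dot\mu = \cad{dh(\mu)}\mu$ and $g^{-1}\dot g = dh(\mu)$: since $\mu_0$ is coadjoint-fixed we get $\dot\mu \equiv 0$, so the graph $\Lambda_c = \set{\mu = \mu_0}$ is invariant, and with $\xi_0 := dh(\mu_0) \in \g$ the base equation becomes $g^{-1}\dot g = \xi_0$, whence $g(t) = g(0)\exp(t\xi_0)$. This is precisely right-translation by the $1$-parameter subgroup $\exp(t\xi_0)$, giving the second assertion.

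The variational core is a single global inequality. Taking $\phi_c$ (which is closed) as the representative of $c$ in Mather's minimisation, left-invariance of both $L$ and $\phi_c$ gives, for a tangent vector with body velocity $w$,
\[ (L - \phi_c)(g,w) = \ell(w) - \kp{\mu_0}{w} \geq -h(\mu_0), \]
by Fenchel's inequality for the Legendre-dual pair $(\ell,h)$, with equality — by the strict convexity built into the Tonelli hypothesis — exactly when $w = dh(\mu_0) = \xi_0$, i.e. on the section whose Legendre image is $\Lambda_c$. Since $G$ carries a lattice it is unimodular, so Haar measure descends to a right-invariant probability measure $m$ on $M$ of full support, whose lift to $\Lambda_c$ is flow-invariant and realises the value $-h(\mu_0)$. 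Hence $\alpha_H(c) = h(\mu_0)$, every invariant measure satisfies $\int (L-\phi_c)\,\d\mu \geq -\alpha_H(c)$, and equality — that is, $c$-minimality — holds iff $\mu$ is supported in $\Lambda_c$. Thus the $c$-minimising measures are exactly the invariant measures carried by $\Lambda_c$: their supports all lie in $\Lambda_c$, while $m$ already has support equal to $\Lambda_c$, so $\cM^*_c = \Lambda_c$, proving the first assertion.

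For Schwartzman strict ergodicity I would compute the rotation vector against the left-invariant closed forms, which span $H^1(M;\R)$: if $\beta$ is such a form then its value on the generator $\xi_0^{L}$ of the flow is the constant $\kp{\beta}{\xi_0}$, so $\int_M \beta(X)\,\d\mu = \kp{\beta}{\xi_0}$ for every invariant probability measure $\mu$, independently of $\mu$; together with the full support of $m$ (hence of the union of all supports) this is exactly the assertion that the motion on $\Lambda_c$ is Schwartzman strictly ergodic. The regularity of $\alpha_H$ is then immediate from the formula above: $\alpha_H(c) = h(\mu_0(c))$ is the composition of the linear map $c\mapsto\mu_0(c)$ with the $C^r$ function $h$, hence $C^r$.

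The dynamical and variational steps above are short and robust; I expect the genuine work to lie in the structural prerequisite on which the left-invariant picture rests, namely the identification $H^1(M;\R) \cong (\g/[\g,\g])^*$ by bi-invariant forms, i.e. that every cohomology class is realised by such a form. This is a de Rham computation on the solvmanifold (or solvable-by-compact) $M = \Gamma\backslash G$ of Nomizu--Hattori--Mostow type, and it is exactly where amenability and the \typee{} hypothesis enter; once it is in hand — together with unimodularity of $G$, which follows from the existence of the lattice — all four assertions follow from the two displays above.
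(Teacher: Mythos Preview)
Your argument is correct and takes a genuinely different route from the paper's. You work \emph{forwards}: represent $c$ by a bi-invariant $1$-form $\mu_0$, observe that $\Lambda_c=\{\mu=\mu_0\}$ is invariant with right-translation dynamics via the Lie--Poisson equations, and then pin down $\cM^*_c=\Lambda_c$ by the pointwise Fenchel inequality $(L-\phi_c)(g,w)=\ell(w)-\kp{\mu_0}{w}\geq -h(\mu_0)$, with equality exactly on $\Lambda_c$, the minimum being realised by the descended Haar measure. The paper instead works \emph{backwards}: it starts from a single $c$-minimising recurrent orbit (furnished by Mather theory), left-translates it by all of $G$, and uses the amenable-averaging and rotation-vector machinery of Section~\ref{sec:amenable-group} (Lemma~\ref{lem:min}) to show the resulting $G$-invariant measure is still $c$-minimising with full-support projection; Mather's graph theorem then forces $\supp\mu$ to be a smooth graph, and left-invariance of the lift makes $\eta$ bi-invariant. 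In effect, the paper \emph{deduces} the identification $H^1(M;\R)\cong(\g/[\g,\g])^*$ as a by-product of Mather theory plus averaging, whereas you take it as a structural input (your Nomizu--Hattori--Mostow remark). Your route is shorter and more transparent; the paper's buys the averaging machinery and generalised rotation vectors as tools of independent interest, and avoids having to quote the de Rham identification in advance.

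One small correction: the \typee{} hypothesis is not part of Theorem~\ref{thm:2}; it only enters in Theorem~\ref{thm:2-implications}. For the degree-one identification you need here, amenability (solvable-by-compact) together with the existence of a lattice suffices, so you should not list \typee{} as an ingredient of this step.
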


\begin{remark}
  \label{re:2}
  (i--ii) provide analogues to the Lagrangian tori and action-angle
  coordinates in the classical Liouville-\Arnold{} theorem. However,
  there are some oddities: for example, it is possible that these
  right-translations have positive topological entropy. Indeed, this
  is \textem{exactly} what happens in the $Sol$ $3$-manifold examples
  of Bolsinov-Taimanov \cite{MR1741286} (in that example, all such Tonelli
  Hamiltonians are also completely integrable).
  Moreover, in this case we can prove that the {\it frequency map} $\partial \alpha_H$ has the same regularity as the Hamiltonian vector field (see iv). Whether or not the same property holds in Theorem \ref{mainthm} remains an open question. Observe that this problem is strictly related to the regularity of the family $\{\Lambda_{c'}\}$ as a function of $c'$.
\end{remark}

To prove this theorem we introduce a generalised notion of rotation vector and  a novel averaging procedure (see section \ref{sec:amenable-group}), which are likely to be of independent interest.\\

Finally, under some additional assumptions we can complete Theorem \ref{thm:2} and prove the following implications for the topology of the configuration space.

\begin{theorem}
  \label{thm:2-implications}
  Assume the hypotheses of Theorem \ref{thm:2}. Assume additionally
  that $G$ is of \typee{}.
  If $H$ is weakly integrable with integral map $F : T^* M \to \R^n$ and there is a $C^1$ Lagrangian graph
  $\Lambda \subset H^{-1}(h)$ and $\Lambda \cap \reg{F} \neq
  \emptyset$, then $M$ is finitely covered by a compact reductive
  Lie group with a non-trivial centre.
\end{theorem}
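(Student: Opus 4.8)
The plan is to feed the single regular point into the weak Liouville--\Arnold{} machinery in order to pin down the diffeomorphism type of $M$, and then to read off the Lie-theoretic consequence from the structure of $G$. First I would identify the given graph with a Mather set. A $C^1$ Lagrangian graph contained in an energy level is a $C^1$ critical subsolution, hence invariant and carrying the whole minimizing dynamics of its cohomology class $c=[\Lambda]$; by Theorem \ref{thm:2} the Mather set $\cM^*_c$ is itself the full Lagrangian graph $\Lambda_c=\Gamma\cdot\Graph{\phi}$, so $\Lambda=\Lambda_c=\cM^*_c$ and $h=\alpha_H(c)$. Each $X_{F_i}$ Poisson--commutes with $H$, so its flow commutes with that of $X_H$ and therefore permutes the Mather sets; since it is isotopic to the identity it fixes the cohomology label and hence preserves $\Lambda$, whence $X_{F_i}$ is tangent to $\Lambda$. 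Because $\Lambda$ is Lagrangian this forces $\{F_i,F_j\}|_\Lambda=0$: the integrals are \emph{involutive along $\Lambda$}. At the regular point $p_0\in\Lambda\cap\reg{F}$ the $dF_i$, hence the $X_{F_i}$, are independent, and $dF_j(X_{F_i}(p_0))=\{F_j,F_i\}(p_0)=0$; thus the $n$ vectors $X_{F_i}(p_0)$ lie in both $T_{p_0}\Lambda$ and $\ker dF(p_0)$, each of dimension $n$, so $T_{p_0}\Lambda=\ker dF(p_0)=\mathrm{span}\{X_{F_i}(p_0)\}$. Near $p_0$ the graph therefore coincides with a regular, Lagrangian level of $F$ carrying the fields $X_{F_i}$, which is exactly the local picture of the classical theorem.

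Next I would invoke Theorems \ref{mainthm} and \ref{maincor}. Using that $\reg{F}$ is open and $X_H$--invariant together with the Schwartzman strict ergodicity of Theorem \ref{thm:2}, I would upgrade $\Lambda\cap\reg{F}\neq\emptyset$ to $\cM^*_c\subset\reg{F}$, which by the Remark following Theorem \ref{mainthm} is enough to run that theorem. Theorem \ref{maincor} then yields a diffeomorphism $M\cong\T^d\times B$ with $d>0$, where $B^{n-d}$ is parallelisable and every finite cover of $B$ has vanishing first Betti number.

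Finally I would match this against the group structure. Since $G$ is simply connected, amenable and of \typee{}, the Levi decomposition together with amenability gives $\g=\liealg{G}=\r\rtimes\s$ with $\r$ an exponential solvable radical and $\s$ a simply connected compact semisimple Levi factor, so $G=R\rtimes S$. The product splitting $M\cong\T^d\times B$ and the Betti-number constraint on all finite covers of $B$ force the solvmanifold determined by $R$ to be flat, that is $R$ abelian, and force the semidirect product to be direct with the resulting $\R^d$-factor central; concretely $G\cong\R^{d}\times S$ with $d>0$ and $\R^d$ central. The lattice $\Gamma$ then meets the $\R^d$-factor in a cocompact $\Z^d$ and projects to a finite subgroup of $S$, so $M=\Gamma\backslash G$ is finitely covered by $\T^{d}\times S$, a compact reductive Lie group whose centre contains $\T^d$ and is therefore non-trivial.

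The hard part will be the last step: converting the purely topological rigidity of Theorem \ref{maincor} (the product $\T^d\times B$ and the finite-cover Betti-number hypothesis) into the \emph{algebraic} statement that the exponential radical $\r$ is abelian and splits off a central $\R^d$. This is exactly where \typee{} and the regular-point hypothesis must be used in an essential way: by Remark \ref{re:2} the Bolsinov--Taimanov $Sol$ examples are completely integrable, of \typee{}, and carry positive-entropy translation flows, so they \emph{must} be excluded here, and they are excluded precisely because for them no invariant Lagrangian graph can meet $\reg{F}$ --- the hyperbolic, non-abelian exponential radical is incompatible with the local torus picture produced at $p_0$. Making this incompatibility quantitative, for instance through an entropy or equicontinuity estimate that rules out expanding directions in $\r$, is the crux of the argument.
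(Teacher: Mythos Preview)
Your broad strategy matches the paper's: identify $\Lambda$ with a Mather set via Theorem~\ref{thm:2}, feed it into Theorems~\ref{mainthm}--\ref{maincor} to get $M\cong\T^d\times B$ with the Betti-number constraint on finite covers of $B$, then extract the Lie-theoretic conclusion. The divergence is entirely in your final step, and there is a genuine gap there.

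You correctly flag the last step as the hard part, but your speculation about how to fill it --- entropy/equicontinuity estimates ruling out expanding directions in the radical, and the Bolsinov--Taimanov $Sol$ examples as the obstruction to be killed --- is not the route the paper takes, and your tentative conclusion $G\cong\R^d\times S$ with $\R^d$ central is stronger than what is actually proved (and likely false in general: the compact factor may act nontrivially on the abelian radical). The paper argues purely algebraically, using three ingredients you do not mention. First, the detailed structure theory of lattices in \typee{} amenable Lie groups developed in section~\ref{sec:translations-on-amena-folds} (diagram~\eqref{al:comm-dia-amgp}): one knows $\Gamma\cap N$ is a lattice in the commutator $N=[S,S]$ of the radical, and the splitting $\pi_1(M)\cong\Z^b\oplus\pi_1(B)$ forced by $M\cong\T^b\times B$ then makes $N$ trivial, so the radical is abelian of dimension $b$. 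Second, $\pi_1(B)$ is virtually polycyclic (inherited from $\Gamma$), and a virtually polycyclic group either is finite or has a finite-index subgroup with nonzero first Betti number; the finite-cover Betti constraint from Theorem~\ref{maincor} forces the former, so $\pi_1(B)$ is finite and sits (up to conjugation) in the compact factor $K$. Third, there is a concrete deformation of the lattice generators $\epsilon_i=e_i\delta_i$ (with $e_i$ in the radical, $\delta_i\in K$) through a one-parameter family $\Gamma_t$ to a lattice $\Gamma_0$ whose torsion-free part acts by pure translations, yielding a finite cover $\T^b\times K$ of $M$. No dynamical or entropy argument enters.

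A secondary point: your upgrade from $\Lambda\cap\reg{F}\neq\emptyset$ to $\cM^*_c\subset\reg{F}$ via Schwartzman strict ergodicity is not justified as stated. Strict ergodicity in the Schwartzman sense says invariant measures share a rotation vector and their supports fill $\Lambda$; it does \emph{not} say orbits are dense, so an open $X_H$-invariant set need not be all of $\Lambda$. The paper simply invokes Theorem~\ref{maincor} without comment here; if you want to be careful you should supply an argument (e.g.\ via the Stefan--Sussman orbit of the $X_{F_i}|\Lambda$ through a regular point being open, combined with $X_{F_i}$-invariance of $\reg{F}$).
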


\subsection{Methodological remarks}
\label{sec:meth-rmks}

From a superficial perspective, theorems \ref{mainthm} and \ref{thm:2} appear quite distinct.
However, they are quite intimately related.
In trying to weaken the Liouville-\Arnold{} theorem, one must find a substitute for its involutivity hypothesis.
Our substitute is to apply Mather theory and Fathi's weak KAM theory to
systems with symmetry.
It is natural to wonder if non-classical symmetries might also leave traces of their existence in the form of invariant Lagrangian graphs--complete solutions of the Hamilton-Jacobi equation.
Theorem~\ref{thm:2} shows that certain types of symmetry, that need not be associated with conserved quantities, do manifest themselves in this fashion.

\section{Action-minimizing sets and integrals of motion}
\label{sec:am-theory}

In the study of weakly integrable systems, or more generally of convex and superlinear Hamiltonian systems, the main idea behind dropping the hypothesis on the involution of the integrals of motion 
consists in studying the relationship between the existence of integrals of motion and the structure of some invariant sets obtained by {\em action-minimizing methods}, which are generally called {\em Mather, Aubry} and {\em Ma\~n\'e sets}.

In this section we want to provide a brief description of this theory, originally developed by John Mather, and the main properties of these sets. We refer the reader to  \cite{Fathibook,Mather91, Mather93, ManeI, SorNotes} for more exhaustive presentations of this material. 
Roughly speaking these action-minimizing sets represent a generalisation of invariant Lagrangian graphs, in the sense that, although they are not necessarily submanifolds, nor even connected,  they still enjoy many similar properties. 
What  is crucial for our study of weakly integrable systems is that these sets have an intrinsic Lagrangian structure, which implies many of their symplectic properties, including a forced {\em local involution} of the integrals of motion, as noticed in \cite{Sor09}.

More specifically, we are interested in studying the existence of {\em action-minimizing invariant probability measures}  and {\em action-minimizing orbits} in the following setting. 

Let $H: T^* M \rightarrow \R$ be a $C^2$ Hamiltonian, which is strictly convex and uniformly superlinear in the fibres. $H$ is called  a {\em Tonelli Hamiltonian}. This Hamiltonian defines a vector field on $T^*M$, known as {\em Hamiltonian vector field}, that can be defined as the unique vector field $X_H$ such that $\omega(X_H,\cdot)= dH$, where $\omega$ is the canonical symplectic form on $T^*M$. We  call the associated flow {\em Hamiltonian flow} and denote it by $\Phi^t_H$.

To any Tonelli Hamiltonian system one can also associate an equivalent dynamical system in the tangent bundle ${TM}$, called {\em Lagrangian system}. Let us consider the associated {\em Tonelli Lagrangian}  $L: T M \rightarrow \R$, defined as
$L(x,v):= \max_{p\in T_x^*M} \left( \langle p,v \rangle - H(x,p) \right)$. It is possible to check that $L$ is also 
 strictly convex and uniformly superlinear in the fibres. In particular this Lagrangian defines a flow on ${T M}$, known as {\em Euler-Lagrange flow} and denoted by $\Phi_L^t$, which can be obtained by integrating the so-called {\em Euler-Lagrange equations}: 
 $$
 \frac{d}{dt} \frac{\partial L}{\partial v}(x,v) = \frac{\partial L}{\partial x}(x,v).
 $$ 
The Hamiltonian and Lagrangian flows are totally equivalent from a dynamical system point of view, in the sense that there exists a {conjugation} between the two. In other words, there exists a diffeomorphism $\cL_L: T M \longrightarrow T^*M$, called {\em Legendre transform}, defined by
$\cL_L(x,v)=(x, \frac{\partial L}{\partial v}(x,v))$, such that $\Phi_H^t = \cL \circ \Phi_L^t \circ \cL^{-1}$.

In classical mechanics, a special role in the study of Hamiltonian dynamics is represented by invariant Lagrangian graphs, \ie graphs of the form
$\Lambda:=\{(x,\eta(x)):\; x\in M\}$ that are Lagrangian (\ie $\omega\big|_{\Lambda}\equiv 0$) and invariant under the Hamiltonian flow $\Phi_H^t$. Recall that being a Lagrangian graph in $T^*M$ is equivalent to say that $\eta$ is a closed $1$-form (\cite[Section 3.2]{Cannas}). These graphs satisfy many interesting properties, but unfortunately they are quite rare. The theory that we are going to describe aims to provide a generalisation of these graphs; namely, we shall construct several compact invariant subsets of the phase space, which are not necessarily submanifolds, but that are contained in Lipschitz Lagrangian graphs and enjoy similar interesting properties.

Let us start by recalling that the Euler-Lagrange flow $\Phi_L^t$ can be also characterised in a more variational way, introducing the so-called {\em Lagrangian action}. Given an absolutely continuous curve $\gamma:[a,b]\longrightarrow M$, we define its action as $A_L(\gamma) =\int_a^b L(\gamma(t),\dot{\gamma}(t))\,dt$. It is a classical result that 
a curve $\gamma:[a,b]\longrightarrow M$ is a solution of the Euler-Lagrange equations if and only if it is a critical point  of $A_L$, restricted to the set of all curves connecting $\gamma(a)$ to $\gamma(b)$ in time $b-a$. However, in general, these extrema are not minima (except if their time-length $b-a$ is very small). Whence the idea of considering minimizing objects and seeing if - whenever they exist - they enjoy special properties or possess a more distinguished structure.

Mather's approach is indeed based  this idea and is concerned with the study of invariant probability measures and orbits that minimize the Lagrangian action (by action of a measure, we mean the collective average action of the orbits in its support, \ie the integral of the Lagrangian against the measure).
It is quite easy to prove (see \cite[Lemma 3.1]{FGS} and \cite[Section 3]{SorNotes}) that invariant probability measures (resp. Hamiltonian orbits) contained in an invariant Lagrangian graph $\Lambda$ (actually its pull-back using $\cL$) minimize the {Lagrangian action} of $L-\eta$, which we shall denote $A_{L-\eta}$, over the set 
$\calM(L)$ of all invariant probability measures for $\Phi_L^t$ (resp. over the set of all curves with the same end-points and defined for the same time interval).
This idea  of changing Lagrangian (which is at the same time a necessity) plays an important role as it allows one to magnify some motions rather than others. For instance, consider the case of an integrable system: one cannot expect to recover all these motions (which foliate the whole phase space) by just minimizing the same Lagrangian action! What is important to point out is that even if we modify $L$, because of the closedness of $\eta$  we do not change the associated Euler-Lagrange flow, \ie $L-\eta$ has the same Euler-Lagrange flow as $L$ (see \cite[p. 177]{Mather91} or \cite[Lemma 4.6]{SorNotes}). This is a crucial step in Mather's approach 
in  \cite{Mather91}:  consider a family of modified Tonelli Lagrangians given by  $L_{\eta}(x,v)=L(x,v)-\langle \eta(x),v\rangle$, where $\eta$ is a closed $1$-form on $M$. These Lagrangians  have the same Euler-Lagrange flow as $L$, but different action-minimizing orbits and measures. Moreover, these action-minimizing objects depend only on the cohomology class of $\eta$ \cite[Lemma p.176]{Mather91}.

Hence,  for each $c \in{\rm H}^1(M;\R)$, if we choose $\eta_c$ to be any smooth closed $1$-form on $M$ with cohomology class $[\eta_c]=c$, we can study action-minimizing invariant probability measures (or orbits) for $L_{\eta_c}:= L-\eta_c$. In particular, this allows one to define  several compact invariant subsets of $TM$:

\begin{itemize}
\item $\widetilde{\cM}_c(L)$, the {\em Mather set of cohomology class $c$}, given by the union of the supports of all invariant probability measures that minimize the action of $L_{\eta_c}$ ($c$-{\em action minimizing measure} or {\em Mather's measures of cohomology class} $c$). See \cite{Mather91}.
\item $\widetilde{\cN}_c(L)$, the {\em Ma\~n\'e set of cohomology class $c$}, given by the union of  all orbits that minimize the action   of $L_{\eta_c}$ on the finite time interval $[a,b]$, for any $a<b$. These orbits are called {\em $c$- global minimizers} or {\em $c$-semi static curves}. \cite{Mather91, Mather93, ManeI}.
\item $\widetilde{\cA}_c(L)$, the {\em Aubry set of cohomology class $c$}, given by the union of  the so called $c-${\em regular  minimizers}  of $L_{\eta_c}$ (or $c$-{\em static curves}). These are special kind of $c$-global minimizers that, roughly speaking, do not only minimize the Lagrangian action to go from the starting point to the end-point, but that - up to a change of sign - also minimize  the action to go backwards, \ie from the end-point to the starting one. A precise definition would require a longer discussion. Since we are not using this definition in the following, we refer the interested reader to \cite{Mather93, ManeI, SorNotes}.
\end{itemize}

\begin{remark}
\begin{itemise}
\item These sets are non-empty, compact, invariant and moreover they satisfy the following inclusions:
$$\widetilde{\cM}_c(L) \subseteq \widetilde{\cA}_c(L) \subseteq \widetilde{\cN}_c(L) \subseteq TM\,.$$
\item The most important feature of the Mather set and the Aubry set is the so-called {\em graph property}, namely they are contained in Lipschitz graphs over $M$ ({\em Mather's graph theorem} \cite[Theorem 2]{Mather91}). More specifically, if $\pi: TM \rightarrow M$ denotes the canonical projection along the fibres, then $\pi|{\widetilde{\cA}_c(L)}$ is injective and its inverse $\big(\pi|\widetilde{\cA}_c(L)\big)^{-1}\!\!\!: \pi\big(\widetilde{\cA}_c(L)\big) \longrightarrow \widetilde{\cA}_c(L)$ is Lipschitz. The same is true for the Mather set (it follows from the above inclusion). 
Observe that in general the Ma\~n\'e set does not  necessarily satisfy the graph property.
\item As we have mentioned above, when there is an invariant Lagrangian graph $\Lambda$ of cohomology class $c$ (\ie it is the graph of a closed $1$-form of cohomology class $c$), then $\widetilde{\cN}_c(L)=\cL^{-1}(\Lambda)$. A priori $\widetilde{\cA}_c(L) \subseteq \cL_L^{-1}(\Lambda)$ and $\widetilde{\cM}_c(L)\subseteq \cL_L^{-1}(\Lambda)$. In particular  $\widetilde{\cM}_c(L)= \cL_L^{-1}(\Lambda)$ if and only if the whole Lagrangian graph is the support of an invariant probability measure (\ie the motion on it is recurrent).
\item Similarly to what happens for invariant Lagrangian graphs, the energy $E(x,v)=\left\langle \frac{\partial L}{\partial v}(x,v), v \right\rangle - L(x,v)$ (\ie the pull-back of the Hamiltonian to $TM$ using the Legendre transform) is constant on these sets, \ie for any $c\in H^1(M;\R)$ the corresponding sets  lie in the same energy level $\a_H(c)$. Moreover, Carneiro \cite{Carneiro} proved a characterization of this energy value in terms of the minimal Lagrangian action of $L-\eta_c$. More specifically:
$$
\a_H(c) = - \min_{\mu \in \calM(L)} A_{L-\eta_c}(\mu).
$$
This defines a function $\a_H: H^1(M;\R)\longrightarrow \R$ that is generally called {\em Mather's $\a$-function} or {\em effective Hamiltonian} (see also \cite[p. 177]{Mather91}). 
\item  It is possible to show that Mather's $\a$-function is convex and superlinear \cite[Theorem 1]{Mather91}. In particular, one can consider its convex conjugate, using  Fenchel duality, which is a function on the dual space $(H^1(M;\R))^*\simeq H_1(M;\R)$ and is given by:
\beqano
\beta_H: H_1(M;\R) &\longrightarrow & \R \\
h &\longmapsto& \max_{c\in H^1(M;\R)} \left(\langle c, h\rangle - \alpha_H(c) \right).
\eeqano
This function is also convex and superlinear and is usually called {\em Mather's $\beta$-function}, or {\em effective Lagrangian}. It has also a meaning in terms of the minimal Lagrangian action. In fact, one can interpret elements in $H_1(M;\R)$ as rotation vectors of invariant probability measures \cite[p. 177]{Mather91} (or `Schwartzman asymptotic cycles'  \cite{Schwartzman}). In particular $\beta_H(h)$ represents the minimal Lagrangian action of $L$ over the set of all invariant probability measures with rotation vector $h$. Observe that in this case we do not need to modify the Lagrangian, since the constraint on the rotation vector will play somehow the role of the previous modification (it is in some sense the same idea as with Lagrange multipliers and constrained extrema of a function). We refer the reader to \cite{Mather91, SorNotes} for a more detailed discussion on the relation between these two different kinds of action-minimizing processes.
\end{itemise}
\end{remark}

Using the duality between Lagrangian and Hamiltonian, 
via the {Legendre transform} introduced above, one can define
the analogue of the Mather, Aubry and Ma\~n\'e sets in the cotangent bundle, simply considering 
$$\cM^*_c(H)=\cL_L\big(\widetilde{\cM}_c(L)\big),\qquad
\cA^*_c(H)=\cL_L\big(\widetilde{\cA}_c(L)\big) \qquad {\rm and} \qquad
\cN^*_c(H)=\cL_L\big(\widetilde{\cN}_c(L)\big).
$$
These sets  continue to satisfy the properties mentioned above, including the graph theorem.
Moreover, it follows from Carneiro's result \cite{Carneiro}, that they are contained in the energy level $\{H(x,p)=\a_H(c)\}$.
However, one could try to define these objects directly in the cotangent bundle. For any cohomology class $c$, let us fix a representative $\eta_c$. Observe  that if  $\Lambda:=\{(x,\eta(x)):\; x\in M\}$ is an invariant Lagrangian graph of cohomology class $c$, \ie $\eta=\eta_c +du$ for some $u:M\rightarrow \R$, then $H(x, \eta_c+du(x))= {\rm const}.$ Therefore, the Lagrangian graph is a solution (and of course a subsolution) of Hamilton-Jacobi equation $H(x, \eta_c+du(x))= k$, for some $k \in \R$. In general solutions of this equation, in the classical sense,  do not exist. However Albert Fathi proved  that it is always possible to find {\em weak solutions}, in the {\em viscosity sense}, and use them to recover the above results. This theory, that can be considered as the analytic counterpart of the variational approach discussed above, is nowadays called {\em weak KAM theory}. We refer the reader to \cite{Fathibook} for a more complete and precise presentation. 

It turns out that for a given cohomology class $c$ these weak solutions can exist only in a specific energy level, that - quite surprisingly - coincides with Mather's value $\a_H(c)$. This is also the least energy value for which Hamilton-Jacobi equation can have subsolutions:
\begin{equation}
  \label{eq:hj-inequation}
H(x,\eta_c + du(x)) \leq k
\end{equation}
where $u\in C^1(M)$. Observe that the existence of  $C^1$-subsolutions corresponding to $k=\a_H(c)$ is a non-trivial result due to Fathi and Siconolfi \cite{FathiSiconolfi}. Moreover they proved that these subsolutions are dense in the set of Lipschitz subsolutions. We shall call these subsolutions, {\em $\eta_c$-critical subsolutions}. Patrick Bernard \cite{Bernardc11} improved this result proving the existence and the denseness of $C^{1,1}$ $\eta_c$-critical subsolutions, which is the best result that one can generally expect to find. The main problem in fact is represented by the Aubry set itself, that plays the role of a {\em non-removable intersection} (see also \cite{PPS}). More specifically, for any  $\eta_c$-critical subsolution $u$, the value of  $\eta_c + d_xu$ is prescribed on $\pi(\cA_c^*(H))$, where $\pi:T^*M \longrightarrow M$ is the canonical projection. Therefore, if the Aubry set is not sufficiently smooth (it is at least Lipschitz), then these subsolutions cannot be smoother. However, on the other hand this obstacle provides a new characterization of the Aubry set in terms of these subsolutions. Namely, if one denotes  by $\cS_{\eta_c}$ the set of $C^{1,1}$ $\eta_c$-critical subsolutions, then: 
\beqa{Aubry} {\cA}_c^*(H)=\bigcap_{u\in \cS_{\eta_c}} \left\{(x,\eta_c + d_xu):\; x\in M\right\}.\eeqa

As we have already recalled,  in $T^*M$, with the standard symplectic form, there is a $1$-$1$ correspondence between Lagrangian graphs and closed $1$-forms (see for instance \cite[Section 3.2]{Cannas}). Therefore, we could interpret the graphs of the differentials of these critical subsolutions as {Lipschitz} Lagrangian graphs in $T^*M$. Therefore the Aubry set can be seen as the intersection of these {distinguished}  Lagrangian graphs and it is exactly this property that provides to this set the intrinsic Lagrangian structure mentioned above and that will play a crucial role in our proof. 

In \cite{Sor09}, in fact,  Sorrentino used this characterization to study the relation between the existence of integrals of motion and the size of the above action-minimizing sets.
Let $H$ be a Tonelli Hamiltonian on $T^*M$ and let $F$ be an integral of motion of $H$. 
If we denote by $\Phi_H$ and $\Phi_F$ the respective flows, then:

\begin{proposition}[see Lemma 2.2 in \cite{Sor09}] \label{prop:invariance}
The Mather set $\cM^*_c(H)$ and the Aubry set $\cA^*_c(H)$ are invariant under the action of $\Phi^t_F$, for each $t\in \R$ and for each $c\in H^1(M;\R)$. 
\end{proposition}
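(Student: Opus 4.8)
The plan is to exploit that $\pb{H}{F}=0$ makes $\Phi^t_F$ a symplectomorphism that commutes with the Hamiltonian flow and preserves every structure defining the action-minimizing sets. First I would record three consequences of the hypothesis. (i) Since $[X_H,X_F]=X_{\pb{H}{F}}=0$, the flows commute, $\Phi^t_H\circ\Phi^s_F=\Phi^s_F\circ\Phi^t_H$, and $X_H$ is $\Phi^s_F$-invariant, $(\Phi^s_F)_*X_H=X_H$. (ii) Since $\d H(X_F)=\pb{H}{F}=0$, the energy is preserved, $H\circ\Phi^s_F=H$, so $\Phi^s_F$ fixes every sublevel set $\{H\le k\}$, in particular $\{H\le\alpha_H(c)\}$. (iii) Writing $\theta$ for the tautological one-form on $T^*M$ and $\pi^*\eta_c$ for the pullback of the closed representative, Cartan's formula gives $\mathcal{L}_{X_F}\theta=\d(\theta(X_F)-F)$ and $\mathcal{L}_{X_F}\pi^*\eta_c=\d(\pi^*\eta_c(X_F))$, both exact, so $(\Phi^s_F)^*\theta-\theta=\d G_s$ and $(\Phi^s_F)^*\pi^*\eta_c-\pi^*\eta_c=\d K_s$ for functions $G_s,K_s$ on $T^*M$; in particular $\Phi^s_F$ fixes the periods of $\theta$ along closed curves, hence preserves the cohomology class of any Lagrangian graph.

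For the Mather set I would argue at the level of measures. Transporting to $T^*M$ via $\cL_L$ and writing $\tilde\mu=(\cL_L)_*\mu$, the $c$-action of an invariant measure $\mu$ equals $\int_{T^*M}\big(\theta(X_H)-\pi^*\eta_c(X_H)-H\big)\,\d\tilde\mu$, because $L=\theta(X_H)-H$ and $\langle\eta_c,v\rangle=\pi^*\eta_c(X_H)$ along the flow. By (i), if $\tilde\mu$ is $\Phi_H$-invariant then so is $(\Phi^s_F)_*\tilde\mu$. Using (i)–(iii), the integrand composed with $\Phi^s_F$ equals itself plus $X_H(G_s-K_s)$ — the $H$-term is unchanged, $X_H$ is carried to itself, and the one-forms change by exact terms — and $\int_{T^*M}X_H(G_s-K_s)\,\d\tilde\mu=0$ by invariance of $\tilde\mu$. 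Hence $\Phi^s_F$ preserves the $c$-action of every invariant measure, so it permutes the $c$-minimizing measures; since $\cM^*_c(H)$ is the closure of the union of their supports and $\Phi^s_F$ is a homeomorphism, $\Phi^s_F(\cM^*_c(H))=\cM^*_c(H)$.

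For the Aubry set I would use the characterization \eqref{Aubry}, $\cA^*_c(H)=\bigcap_{u\in\cS_{\eta_c}}\Graph{\eta_c+\d u}$, and try to show that $\Phi^s_F$ permutes the family of critical-subsolution graphs $\Lambda_u:=\Graph{\eta_c+\d u}$. By (ii) the image $\Phi^s_F(\Lambda_u)$ is a Lagrangian submanifold contained in $\{H\le\alpha_H(c)\}$, and by (iii) it has cohomology class $c$; so the only thing to check is that it is again a graph over $M$, in which case it is $\Lambda_{u_s}$ for a $C^{1,1}$ critical subsolution $u_s$, $\Phi^s_F$ permutes $\{\Lambda_u\}_{u\in\cS_{\eta_c}}$, and therefore fixes their intersection $\cA^*_c(H)$. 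The graph property is the main obstacle, since for large $s$ the flow $\Phi^s_F$ may fold $\Lambda_u$ along the fibres. I would circumvent this in two steps. First, critical subsolutions are equi-Lipschitz — their graphs lie in the fibrewise-compact set $\{H\le\alpha_H(c)\}$, so $\|\eta_c+\d u\|_\infty$ is bounded uniformly in $u$ — whence the base maps $x\mapsto\pi\big(\Phi^s_F(x,\eta_c(x)+\d_xu(x))\big)=x+s\,\partial_pF(x,\cdot)+O(s^2)$ are, for $|s|$ below a threshold $\varepsilon$ that is uniform in $u$, Lipschitz perturbations of the identity with constant $<1$, hence homeomorphisms of the closed manifold $M$; thus $\Phi^s_F(\Lambda_u)$ is a graph for all $u$ and all $|s|<\varepsilon$. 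This gives invariance of $\cA^*_c(H)$ for $|s|<\varepsilon$, which I would then upgrade to all $s\in\R$ using the group property $\Phi^{s+t}_F=\Phi^s_F\circ\Phi^t_F$. The delicate point throughout is precisely this uniform-in-$u$ control of the graph property, for which the equi-Lipschitz bound on critical subsolutions is the essential input.
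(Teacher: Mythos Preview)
The paper does not give its own proof of this proposition; it simply quotes the result from \cite{Sor09} (Lemma~2.2 there). So there is no in-paper argument to compare against, and I assess your proposal on its own.

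Your argument for the Mather set is correct. Using $(\Phi^s_F)^*\theta=\theta+\d G_s$, $(\Phi^s_F)^*\pi^*\eta_c=\pi^*\eta_c+\d K_s$, $(\Phi^s_F)_*X_H=X_H$ and $H\circ\Phi^s_F=H$, the integrand $\theta(X_H)-\pi^*\eta_c(X_H)-H$ changes under $\Phi^s_F$ by $X_H(G_s-K_s)$, which integrates to zero against any $\Phi^t_H$-invariant measure. Hence $(\Phi^s_F)_*$ preserves the $c$-action on $\calM(L)$ and permutes the $c$-minimizing measures, giving $\Phi^s_F(\cM^*_c)=\cM^*_c$.

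Your argument for the Aubry set has a genuine gap, and it is precisely at the point you flag as ``delicate''. For the base map
\[
\psi^u_s(x)=\pi\big(\Phi^s_F(x,\eta_c(x)+\d_xu)\big)=x+s\,\partial_pF(x,\eta_c(x)+\d_xu)+O(s^2)
\]
to be a Lipschitz perturbation of the identity with constant $<1$, you need a bound on the Lipschitz constant of $x\mapsto \partial_pF(x,\eta_c(x)+\d_xu)$, and that involves the Lipschitz constant of $\d u$, i.e.\ the $C^{1,1}$ seminorm of $u$. What the sublevel constraint $\{H\le\alpha_H(c)\}$ gives you is only a uniform bound on $\|\eta_c+\d u\|_\infty$, i.e.\ equi-Lipschitz \emph{$u$}, not equi-Lipschitz \emph{$\d u$}. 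These are different: away from the projected Aubry set one can modify a critical subsolution by adding a bump with small $C^1$ norm but arbitrarily large second derivatives, remaining in $\cS_{\eta_c}$. So there is no uniform $C^{1,1}$ bound over $\cS_{\eta_c}$, your $\varepsilon$ is not uniform in $u$, and the iteration via the group property fails too (after one step $u_s$ may have larger $C^{1,1}$ norm, shrinking the next admissible step; nothing prevents $\sum\varepsilon_n<\infty$).

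Two repairs are available. First, restrict to a subfamily $\cS'\subset\cS_{\eta_c}$ with a \emph{uniform} $C^{1,1}$ bound whose graphs still intersect to $\cA^*_c$ --- Bernard's regularisation \cite{Bernardc11} supplies such a family --- but you must then argue that the image family $\{\Phi^s_F(\Lambda_u):u\in\cS'\}$ is again rich enough to cut out $\cA^*_c$, which is an additional step. Second, and more efficiently, bypass the graph issue by invoking the symplectic invariance of the Aubry set under Hamiltonian diffeomorphisms preserving the Tonelli property: $\Phi^s_F$ is Hamiltonian, $(\Phi^s_F)^*\theta-\theta$ is exact (your point (iii)), and $(\Phi^s_F)^*H=H$ is Tonelli, so $\Phi^s_F(\cA^*_c(H))=\cA^*_c((\Phi^s_F)^*H)=\cA^*_c(H)$ directly.
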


Moreover one can study the implications of the existence of {\em independent} integrals of motion, \ie integrals of motion whose differentials are linearly independent, as vectors, at each point of these sets.  It follows from the above proposition that this  relates to  the size of the Mather and Aubry sets of $H$. 
In order to make clear what we mean by the `size' of these sets, let us introduce some notion of tangent space.
We call {\em generalised tangent space} to $\cM^*_c(H)$ (resp.~$\cA^*_c(H)$) at a point $(x,p)$, the set of all vectors that are tangent to curves in $\cM^*_c(H)$ (resp.~$\cA^*_c(H)$) at $(x,p)$. We denote it by $T^G_{(x,p)}\cM^*_c(H)$ (resp.~$T^G_{(x,p)}\cA^*_c(H))$ and define its {\em rank} to be the largest number of linearly independent vectors that it contains. Then:

\begin{proposition}[See Proposition 2.4 in \cite{Sor09}]\label{Prop1Sor09}
Let $H$ be a Tonelli Hamiltonian on $T^*M$ and suppose that there exist $k$ independent integrals of motion on $\cM^*_c(H)$ (resp. $\cA^*_c(H)$).
Then, $\rank T^G_{(x,p)}\cM^*_c(H) \geq k$ (resp. $\rank T^G_{(x,p)}\cA^*_c(H) \geq k$)  at all points $(x,p)\in \cM^*_c(H)$ (resp. $(x,p)\in \cA^*_c(H)$).
\end{proposition}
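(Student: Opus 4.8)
The plan is to exhibit, at each point $(x,p)$ of the Mather set (resp. Aubry set), an explicit family of $k$ linearly independent vectors lying in the generalized tangent space, realised as the velocities of the orbits of the Hamiltonian flows of the integrals of motion. The whole argument rests on just two ingredients: the invariance statement of Proposition \ref{prop:invariance} and the non-degeneracy of the canonical symplectic form $\omega$.

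First I would fix $k$ integrals of motion $F_1,\ldots,F_k$ whose differentials $dF_1,\ldots,dF_k$ are linearly independent at every point of $\cM^*_c(H)$ (resp. $\cA^*_c(H)$), and fix a point $(x,p)$ in that set. For each $i$, Proposition \ref{prop:invariance} guarantees that the orbit $t\mapsto \Phi^t_{F_i}(x,p)$ remains entirely inside $\cM^*_c(H)$ (resp. $\cA^*_c(H)$). Since $F_i\in C^2$ its flow is $C^1$, so this orbit is a genuine curve lying in the set and passing through $(x,p)$, and its velocity at $t=0$ is the Hamiltonian vector field $X_{F_i}(x,p)$. By the very definition of the generalized tangent space as the collection of vectors tangent to curves contained in $\cM^*_c(H)$, we conclude that $X_{F_1}(x,p),\ldots,X_{F_k}(x,p)\in T^G_{(x,p)}\cM^*_c(H)$.

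It then remains to check that these $k$ vectors are linearly independent. Here I would use that $X_{F_i}$ is characterised by $\omega(X_{F_i},\cdot)=dF_i$, so that $X_{F_i}=\flat^{-1}(dF_i)$, where $\flat: v\mapsto \omega(v,\cdot)$ is the musical isomorphism $T_{(x,p)}(T^*M)\to T^*_{(x,p)}(T^*M)$ induced by the non-degenerate form $\omega$. Being a linear isomorphism, $\flat^{-1}$ carries the linearly independent covectors $dF_1,\ldots,dF_k$ to linearly independent vectors $X_{F_1},\ldots,X_{F_k}$. Hence the generalized tangent space contains $k$ linearly independent vectors, so its rank is at least $k$, which is the assertion; the argument for the Aubry set is verbatim the same.

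There is no serious obstacle once Proposition \ref{prop:invariance} is available: the statement is a short consequence of it together with linear algebra. The only point requiring a little care is that $\cM^*_c(H)$ and $\cA^*_c(H)$ need not be submanifolds, so one cannot speak of their tangent spaces in the usual sense. This is precisely why the relevant notion is the generalized tangent space defined through tangents to curves lying in the set, and the invariance of the set under each $\Phi^t_{F_i}$ is exactly what certifies that the orbit curves are admissible competitors in that definition.
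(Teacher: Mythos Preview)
Your argument is correct. The paper does not actually supply a proof of this proposition; it is quoted from \cite{Sor09} and only the statement is reproduced here. Your proof is precisely the natural one and matches the argument in the cited reference: invariance of $\cM^*_c(H)$ and $\cA^*_c(H)$ under each $\Phi^t_{F_i}$ (Proposition~\ref{prop:invariance}) places the Hamiltonian vector fields $X_{F_i}(x,p)$ in the generalized tangent space, and the symplectic musical isomorphism converts linear independence of the $dF_i$ into linear independence of the $X_{F_i}$.
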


\begin{remark}\label{RemSor09} In particular, the existence of the maximum possible number of integrals of motion (\ie $k=n$) implies that these sets are invariant smooth Lagrangian graphs (see \cite[Remark 3.5]{Sor09} or \cite[Lemma 3.4 and Lemma 3.6]{Sor09}). In particular, smoothness is a consequence of the fact that these graphs lie in level sets of the integral map, which is non-degenerate. \end{remark}

However the most important peculiarity of these action-minimizing sets observed in \cite{Sor09}, at least as far as we are concerned,  is that they force the integrals of motion to Poisson-commute on them. In fact, using the characterization of the Aubry set in terms of critical subsolutions of Hamilton-Jacobi  and its symplectic interpretation given above (see (\ref{Aubry}) and the subsequent comment), one can recover the involution property of the integrals of motion, at least locally.

\begin{proposition}[See Proposition 2.7 in \cite{Sor09}]\label{prop2Sor09}
Let $H$ be a Tonelli Hamiltonian on $T^*M$ and let $F_1$ and $F_2$ be two integrals of motion. Then for each $c\in H^1(M;\R)$ we have that $\{F_1,F_2\}(x,\hat{\pi}_c^{-1}\!(x))=0$ for all $x\in \overline{{\rm Int}\big(\cA_c(H)\big)}$, where $\hat{\pi}_c=\pi|\cA^*_c(H)$   and $\cA_c(H)=\pi\big(\cA^*_c(H)\big)$.
\end{proposition}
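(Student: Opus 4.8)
The plan is to exploit the intrinsic Lagrangian structure of the Aubry set encoded in (\ref{Aubry}) together with the invariance furnished by Proposition \ref{prop:invariance}. Writing $\pb{F_1}{F_2}=\omega(X_{F_1},X_{F_2})$ (the sign being immaterial since we shall show the bracket vanishes), the assertion amounts to proving that the two Hamiltonian vector fields $X_{F_1}$ and $X_{F_2}$ span an isotropic plane at every point of $\cA^*_c(H)$ lying over $\overline{\mathrm{Int}(\cA_c(H))}$. First I would record that, by Proposition \ref{prop:invariance}, the flows $\Phi^t_{F_1}$ and $\Phi^t_{F_2}$ preserve $\cA^*_c(H)$; hence for every $(x,p)\in\cA^*_c(H)$ the orbits $t\mapsto\Phi^t_{F_i}(x,p)$ remain in $\cA^*_c(H)$, and differentiating at $t=0$ places their velocities $X_{F_i}(x,p)$ in the generalized tangent space $T^G_{(x,p)}\cA^*_c(H)$.

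Second, over the interior of the projection the Aubry set becomes a genuine Lipschitz Lagrangian graph. Fix a $C^{1,1}$ $\eta_c$-critical subsolution $u\in\cS_{\eta_c}$; by (\ref{Aubry}) the value of $\eta_c+d_xu$ is prescribed on $\cA_c(H)$, so over the open set $\mathrm{Int}(\cA_c(H))$ the Aubry set coincides with the graph of the Lipschitz closed $1$-form $\sigma:=\eta_c+du$. Because $\sigma$ is Lipschitz, Rademacher's theorem guarantees it is differentiable on a subset of full measure in $\mathrm{Int}(\cA_c(H))$; and since $\sigma$ is closed, its a.e.-derivative $D\sigma$ is symmetric, so at each such point the tangent plane to $\Graph{\sigma}$ is an $n$-dimensional Lagrangian subspace of $T^*M$.

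Third, I would combine these two facts pointwise. At a differentiability point $x$ of $\sigma$ in $\mathrm{Int}(\cA_c(H))$, for small $t$ the projected orbit $\pi(\Phi^t_{F_i}(x,\sigma(x)))$ stays inside the open set $\mathrm{Int}(\cA_c(H))$, so the orbit itself stays on $\Graph{\sigma}$; differentiating at $t=0$ places $X_{F_i}(x,\sigma(x))$ in the Lagrangian tangent plane $T_{(x,\sigma(x))}\Graph{\sigma}$. As both vector fields then lie in this Lagrangian, hence isotropic, plane, we get $\pb{F_1}{F_2}(x,\sigma(x))=\omega(X_{F_1},X_{F_2})=0$ at every such $x$. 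Finally, the map $x\mapsto\pb{F_1}{F_2}(x,\hat{\pi}_c^{-1}(x))$ is continuous (composition of the continuous bracket with the Lipschitz inverse graph map of Mather's graph theorem) and vanishes on a full-measure subset of $\mathrm{Int}(\cA_c(H))$; continuity then forces it to vanish on all of $\mathrm{Int}(\cA_c(H))$ and therefore on its closure $\overline{\mathrm{Int}(\cA_c(H))}$.

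The main obstacle is the low regularity of the Aubry set: it is only a Lipschitz graph, so tangent planes and the Lagrangian condition are available merely almost everywhere. The crux is thus the a.e. argument of the second step—Rademacher differentiability together with the symmetry of the distributional derivative of a closed Lipschitz form—which is precisely what licenses the claim that $\Graph{\sigma}$ carries a bona fide Lagrangian tangent plane at almost every interior point; the passage from ``almost every point'' to the whole closure of the interior then rests essentially on the continuity of $\pb{F_1}{F_2}$ along the Lipschitz graph.
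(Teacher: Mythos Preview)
Your argument is correct and follows precisely the strategy the paper itself sketches just before stating the proposition: exploit the invariance of $\cA^*_c(H)$ under $\Phi^t_{F_i}$ (Proposition~\ref{prop:invariance}) together with the characterisation (\ref{Aubry}) of the Aubry set as an intersection of Lipschitz Lagrangian graphs. The paper does not supply its own proof here but defers to \cite{Sor09}, and your approach---restricting to the graph of a single $C^{1,1}$ subsolution over $\mathrm{Int}(\cA_c(H))$, using Rademacher and the a.e.\ symmetry of the Hessian of a $C^{1,1}$ function to get Lagrangian tangent planes almost everywhere, then closing up by continuity---is essentially the argument of that reference.
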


\begin{remark}
Observe that the above set $\overline{{\rm Int}\big(\cA_c(H)\big)}$ may be empty. What the proposition says is that whenever it is non-empty, the integrals of motion are forced to Poisson-commute on it. In the cases that we shall be considering hereafter, $\cA_c(H)=M$ and therefore it is not empty.
\end{remark}

\section{Proof of Theorem \ref{mainthm}}
\label{sec:proofs}

\begin{proposition}
  \label{thm:1} 
  Let $\Lambda \subset H^{-1}(h)$ be a $C^1$ Lagrangian graph. If $H$
  is a weakly integrable Tonelli Hamiltonian and $\Lambda \subset
  \reg{F}$, then $M$ admits the structure of a smooth $\T^d$-bundle
  over a parallelisable base $B^{n-d}$ for some $d>0$.
\end{proposition}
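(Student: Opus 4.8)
The plan is to exhibit $\Lambda$ (and hence $M$, via the diffeomorphism $\pi|_\Lambda$) as the total space of a smooth torus bundle, by extracting from the integrals $F_1,\dots,F_n$ and the \emph{forced} involution on $\Lambda$ an invariant integrable distribution whose leaves are tori. First I would pin down the weak-KAM picture. Writing $\Lambda=\{(x,\eta(x))\}$ with $\eta$ closed and $[\eta]=c$, the identity $H(x,\eta(x))\equiv h$ exhibits $\eta=\eta_c+\d u$ as a classical ($C^1$) solution of the critical Hamilton--Jacobi equation; hence $h=\alpha_H(c)$, the graph $\Lambda$ is $\Phi_H^t$-invariant, and $\Lambda=\cN^*_c(H)\supseteq\cA^*_c(H)\supseteq\cM^*_c(H)$. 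Since $\cA^*_c(H)\subseteq\Lambda\subset\reg F$, the integrals are independent along the action-minimizing sets, so Proposition~\ref{Prop1Sor09} gives $\rank T^G\cA^*_c=n$ (the graph property forces the reverse inequality), which by Remark~\ref{RemSor09} forces the relevant minimizing set to be a smooth Lagrangian graph. The upshot I want is twofold: $\Lambda$ is smooth, and Proposition~\ref{prop2Sor09} applies, delivering the \emph{forced involution} $\pb{F_i}{F_j}\equiv 0$ on $\Lambda$.

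Granting the involution, I would then build the distribution. Along $\Lambda$ the fields $X_{F_1},\dots,X_{F_n}$ are pointwise independent (because $\Lambda\subset\reg F$) and, since $\d F_k(X_{F_i})=\pb{F_k}{F_i}=0$ on $\Lambda$, they span an isotropic $n$-dimensional — hence Lagrangian — subbundle $\mathcal V=\langle X_{F_i}\rangle=\ker \d F$ of $T(T^*M)|_\Lambda$. I set
\[
D_{(x,p)}\;=\;T_{(x,p)}\Lambda\,\cap\,\ker \d F_{(x,p)}\;=\;\ker\,\d(F|_\Lambda)_{(x,p)},
\]
the intersection of two Lagrangian subbundles. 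The crucial point is that $F|_\Lambda\colon\Lambda\to\R^n$ has locally constant rank $n-d$; this I would obtain from the $\Phi_{F_i}$-invariance of the minimizing sets (Proposition~\ref{prop:invariance}) together with upper semicontinuity of the rank, the symmetry forcing the generic value to be attained at every point. Given constant rank, $D$ is a smooth integrable distribution of rank $d$ whose leaves are the connected components of the level sets of $F|_\Lambda$. Moreover $X_H\in D$: indeed $\d F_i(X_H)=\pb{F_i}{H}=0$ places $X_H\in\ker\d F$, while invariance places $X_H\in T\Lambda$; as the minimizing dynamics on $\Lambda=\cN^*_c$ is non-trivial, $X_H$ does not vanish, so $d>0$.

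Finally I would identify leaves and base. Each leaf is a compact connected $d$-manifold tangent to $D$, and along it the tangent directions are exactly those combinations $\sum_i a_i X_{F_i}$ that stay tangent to $\Lambda$; using the involution on $\Lambda$ one checks these restricted fields commute, so the leaf carries $d$ commuting, pointwise-independent, complete vector fields and is therefore a torus $\T^d$ by the classical Liouville--\Arnold{} argument. The leaf space $B^{n-d}=\Lambda/\mathcal F$ is then a smooth closed manifold and $\Lambda\to B$ a smooth $\T^d$-bundle; transporting through $\pi|_\Lambda\colon\Lambda\to M$ gives the asserted structure on $M$. For parallelisability of $B$ I would use that $F|_\Lambda$, being constant on leaves, descends to an immersion $\overline{F}\colon B\to\R^n$, whose component differentials are globally defined $1$-forms spanning $T^*B$, from which a trivialisation of $T^*B$ (hence of $TB$) is built.

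The step I expect to be genuinely hard is the constant-rank statement for $F|_\Lambda$, and the closely related commutation of the restricted symmetry fields. The involution $\pb{F_i}{F_j}=0$ is known only \emph{on} $\Lambda$, not in a neighbourhood, so $[X_{F_i},X_{F_j}]=X_{\pb{F_i}{F_j}}$ need not vanish off $\Lambda$; one must show that the $\Lambda$-tangential part of these brackets integrates to a bona fide $\T^d$-action (rather than a more general, possibly non-abelian, homogeneous structure of the kind that arises in Theorem~\ref{thm:2}). Controlling this, together with promoting the constant-rank foliation to a locally trivial fibre bundle and securing the parallelisability of the quotient, is where the real work lies.
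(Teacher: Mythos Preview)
Your proposal contains a genuine gap that collapses the whole decomposition. You invoke Proposition~\ref{prop:invariance} only to control the rank of $F|_\Lambda$, but that proposition says more: since each $F_i$ is an integral of motion of $H$, the Aubry set $\cA^*_c(H)=\Lambda$ is invariant under \emph{every} flow $\Phi^t_{F_i}$, so \emph{all} of the $X_{F_i}$ are tangent to $\Lambda$. As they are independent on $\Lambda\subset\reg F$ and $\dim\Lambda=n$, one has $T\Lambda=\operatorname{span}\{X_{F_i}\}$. Because $\Lambda$ is Lagrangian and both $X_{F_i},X_{F_j}\in T\Lambda$, $\{F_i,F_j\}=\omega(X_{F_i},X_{F_j})=0$ on $\Lambda$; equivalently $T\Lambda=\ker\d F$ along $\Lambda$. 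Hence your ``two Lagrangian subbundles'' $T\Lambda$ and $\ker\d F$ coincide, $D=T\Lambda$, $F|_\Lambda$ is \emph{constant}, and your foliation has a single leaf $\Lambda$ with $d=n$. You are then forced to show $\Lambda\cong\T^n$ via $n$ commuting restricted fields --- exactly the step you yourself flagged as hard, and it is in general false: $[X_{F_i},X_{F_j}]=X_{\{F_i,F_j\}}$ need not vanish on $\Lambda$ merely because $\{F_i,F_j\}$ does, so the $Y_i=X_{F_i}|_\Lambda$ frame $T\Lambda$ but need not commute. Your descended immersion $\overline F:B\to\R^n$ likewise degenerates to a constant map on a point.

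The paper sidesteps this entirely. It never attempts to make the $Y_i$ commute with one another; it uses only that $Y=X_H|_\Lambda$ commutes with each $Y_i$, which holds because $\{H,F_i\}=0$ \emph{globally}, not merely on $\Lambda$. One forms the group $\Gamma$ of diffeomorphisms of $\Lambda$ generated by the flows $\phi^i$ of the $Y_i$; by Stefan--Sussman, $\Gamma$ acts transitively on $\Lambda$, and the flow $\phi_t$ of $Y$ lies in the centre $Z$ of $\Gamma$. Declaring $\{Y_i\}$ to be an orthonormal frame defines a Riemannian metric on $\Lambda$ for which $Z$ acts by isometries; by Montgomery--Zippin the closure $\bar Z$ is a compact connected abelian Lie group, i.e.\ a torus $\T^d$ with $d>0$ (it contains $\{\phi_t\}$). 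Since $\bar Z$ centralises $\Gamma$, transitivity makes all $\bar Z$-orbits conjugate, so $\bar Z$ acts freely and $\Lambda$ is a principal $\T^d$-bundle. Finally, after a linear change one arranges $Y_1,\dots,Y_d$ tangent and $Y_{d+1},\dots,Y_n$ normal to the fibres; being $\T^d$-invariant, the latter descend to a framing of $TB$. The torus thus arises from the closure of the \emph{centre} of the symmetry group, not from level sets of $F|_\Lambda$.
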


\begin{proof}[Proof (Proposition \ref{thm:1})]
  Since $\Lambda$ is a $C^1$ Lagrangian graph that lies in an energy
  surface of $H$, $\Lambda$ is the graph of a $C^1$ closed $1$-form
  $\lambda$ with cohomology class $c$. It follows that $\lambda$
  solves the Hamilton-Jacobi equation and from (\ref{Aubry}) that $\cA^*_c(H) \subseteq \Lambda$ (see also \cite[Section 3]{Sor09}). Moreover, Proposition \ref{Prop1Sor09} and Remark \ref{RemSor09} allow us to conclude that $\cA^*_c(H) = \Lambda$.
  Therefore, Proposition \ref{prop:invariance} implies
  that each vector field $X_{F_i}, i=1,\ldots,n$ is tangent to
  $\Lambda$. Let $Y=X_H|\Lambda$ and $Y_i=X_{F_i}|\Lambda$. Since
  $\Lambda \subset \reg{F}$, $\{Y_i\}$ is a framing of $T\Lambda$.
  
  Let $\phi^i$ (resp. $\phi$) be the flow of $Y_i$ (resp. $Y$). Let
  $\Gamma$ be the group of diffeomorphisms generated by the flows
  $\phi^i$ and $\phi$. The Stefan-Sussman orbit theorem implies that
  $\Lambda$ is the orbit of $\Gamma$: $\Lambda = \set{ \prod_{j=1}^m
    \phi^{i_j}_{t_j}(p)\ : \ t_j \in \R, m\in\N}$ for any
  $p\in\Lambda$~\cite{MR0310922,MR0321133,MR0362395}.
  Since $H$ Poisson-commutes with each of the $F_i$, the vector
  field $Y$ commutes with $Y_i$ for all $i$. Therefore, the flow
  $\phi$ of $Y$ commutes with each $\phi^i$, \ie $\phi$ lies in the
  centre $Z$ of $\Gamma$.

  Let $p \in \Lambda$ be a given point and $q \in \Lambda$ a second
  point. Let $\Phi = \prod_{j=1}^m \phi^{i_j}_{t_j} $ be an element in
  $\Gamma$ satisfying $\Phi(p)=q$. If $\varphi_t$ is a $1$-parameter
  subgroup of $Z$, then $\varphi_t(q) = \Phi(\varphi_t(p))$ for all
  $t\in\R$. Therefore, each orbit of $\varphi$ is conjugate by a
  smooth conjugacy isotopic to the identity. We have seen that $\phi_t
  \in Z$ for all $t$, and the above shows that each orbit of $\phi_t$
  (indeed, of $Z$) is conjugate.

  Define a smooth Riemannian metric $g$ on $\Lambda$ by defining
  $\{Y_i\}$ to be an orthonormal framing of $T\Lambda$. Then, we see
  that each element in $Z$ preserves $g$. Therefore $Z$ is a group of
  isometries of a compact Riemannian manifold. The closure of $Z$ in
  the group of $C^1$ diffeomorphisms of $\Lambda$, $\bar{Z}$, is
  therefore a compact connected abelian Lie group by the Montgomery-Zippin
  theorem~\cite{MR0073104}. Therefore, $\bar{Z}$ is a $d$-dimensional
  torus for some $d>0$ (since it contains the $1$-parameter group
  $\phi_t$).

  Since $Z$ centralises $\Gamma$, so does its closure
  $\bar{Z}$. Therefore, each orbit of $\bar{Z}$ is conjugate. It
  follows that $\bar{Z}$ acts freely on $\Lambda$. This gives
  $\Lambda$ the structure of a principal $\T^d$-bundle.

  Finally, let $p \in \Lambda$ be given. Possibly after a linear
  change of basis, we can suppose that $Y_i$, $i=1,\ldots,d$, is a
  basis of the tangent space to the $\T^d$-orbit through $p$, and
  $Y_i$, $i=d+1,\ldots,n$ is a basis of the orthogonal
  complement. Therefore, $Y_i$, $i=d+1,\ldots,n$ is a basis of the 
  orthogonal complement to the fibre at all points on
  $\Lambda$. Since each vector field $Y_i$ is $\T^d$-invariant, it
  descends to $B=\Lambda/\T^d$. Therefore, the vector fields on $B$
  induced by $Y_i$, $i=d+1,\ldots,n$, frame $TB$.
\end{proof}

\begin{remark}\label{rem:1}
  { A few remarks are in order. First, there is a $\xi \in
    \t=\liealg{\T^d}$ such that $\exp(t\xi)\cdot p = \phi_t(p)$ for
    all $t\in\R$ and $p\in\Lambda$. This follows from the fact that
    $\{\phi_t\} \subset \bar{Z}$ is a $1$-parameter
    subgroup. Therefore, there is a torus $T$ of dimension $c\leq d$
    which is the closure of $\{\exp(t\xi)\}$ in $\T^d$ such that each
    orbit closure of $\phi$ is the orbit of $T$. Second, for almost
    all constants $(\alpha_i) \in \R^d$, the vector field $Y_{\alpha}
    = Y + \sum_{i=1}^d \alpha_i Y_i$ will have dense orbits in each
    $\T^d$ orbit. Third,
    since each orbit of $\phi$ is conjugate by a diffeomorphism
    isotopic to $1$, the asymptotic homology of $\Lambda$ is unique (see \cite[Proposition A.1]{FGS}).
    Finally, if, as in Theorem \ref{mainthm}, one has an
    upper semicontinuous family of such Lagrangian graphs
    $\Lambda_{c'}$, then the dimension $d'$ of the torus is an
    upper semicontinuous function of $c'$.
  }
\end{remark}

\begin{proof}[Proof (Theorem \ref{mainthm})] 
Since $\cA^*_c$ is contained in the set of regular points of $F$,  it follows from Proposition \ref{Prop1Sor09} and Remark \ref{RemSor09} that the Aubry set $\cA_c^*$ is a $C^1$ invariant Lagrangian graph $\Lambda_c$ of cohomology class $c$ and that it coincides with the Mather set $\cM^*_c$ (see also \cite[Lemmas 3.4 \& 3.6]{Sor09}). Therefore, $\Lambda_c$ supports an invariant probability measure of full support.
In particular, since all $c$-critical subsolutions of the Hamilton-Jacobi equation \eqref{eq:hj-inequation}, with $k=\alpha_H(c)$, have the same differential on the (projected) Aubry set \cite[Theorem 4.11.5]{Fathibook}, it follows that, up to constants, there exists a unique $c$-critical subsolution, which is indeed a solution. It follows then that the Ma\~n\'e set $\cN_c^*=\cA^*_c$ (see \cite[Definition 5.2.5]{Fathibook}).  We can use the upper semicontinuity of the Ma\~n\'e set (see for instance \cite[Proposition 13]{Arn08}) to deduce that the Ma\~n\'e set corresponding to nearby cohomology classes must also lie in $\reg{F}$ (note in fact that in general the Aubry set is not upper semicontinuous \cite{Bern08}). Hence, there  exists an open neighborhood $\cO$ of $c$ in $H^1(M;\R)$ such that  $\cA_{c'}^* \subseteq \cN_{c'}^* \subset \reg{F}$ for all $c'\in \cO$ and applying the same argument as above, we can conclude that each $\cA_{c'}^*$ is a smooth invariant Lagrangian graph of cohomology class $c'$ and that it coincides with the Mather set $\cM^*_{c'}$.

At this point (i) and (ii) follow from Proposition \ref{thm:1} and Remark \ref{rem:1}.

The proof of (iii) is the same as in \cite[Corollary 3.8]{Sor09}, but in this case we also know that these graphs are Schwartzman uniquely ergodic, \ie all invariant probability measures on  $\Lambda_{c'}$ have  the same rotation vector $h_{c'} \in H_1(M;\R)$ (see Remark \ref{rem:1}). 
The differentiability of $\alpha_H$ follows then from  \cite[Corollary 3.6]{FGS}.
The differentiability of $\beta_H$ follows the disjointness of these graphs (see for instance \cite[Theorem 3.3]{FGS} or \cite[Remark 4.26 (ii)]{SorNotes}).

\end{proof}

\begin{proof}[Proof (Theorem \ref{maincor})]
Let $d$ be the largest dimension of the torus fibre of $\Lambda_{c}$
for $c \in \cO$. The upper semicontinuity of this dimension implies
that there is an open set on which the dimension of the fibre equals
$d$; without loss of generality, it can be supposed that this open set
is $\cO$. By (iii) of Theorem \ref{mainthm}, Mather's
$\alpha$-function is differentiable on $\cO$. Since $\alpha_H$ is a
locally Lipschitz function, it is continuously
differentiable on $\cO$. Therefore, the map
\begin{align*}
  \xymatrix{
    c \ar@{|->}[r]   & h=\partial \alpha_H(c), &&
    \cO \ar[r]^(.35){\partial \alpha_H} & H_1(M;\R)
  }
\end{align*}
is continuous and one-to-one (by \cite[Theorem 3.3]{FGS}) and
hence a homeomorphism onto its image.

Let $b_1(M) = \dim H_1(M;\R)$ be the first Betti number of $M$. Since
the rotation vector of $Y=X_H | \Lambda_c$ is the image of a cycle in
$H_1(\T^d;\R)$, $d \geq b_1(M)$. To prove that $d = b_1(M)$, we need a
few lemmata. (We follow the notation of the proof of Theorem
\ref{mainthm}.)

\begin{lemma}
  \label{le:h1b}
  $H_1(B;\R) = \set{0}$.
\end{lemma}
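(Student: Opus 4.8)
The plan is to identify $M$ with the Lagrangian graph $\Lambda_c$ (the graph projection being a diffeomorphism) and to exploit the principal $\T^d$-bundle structure $\T^d \hookrightarrow M \xrightarrow{\ p\ } B$ produced in Proposition~\ref{thm:1}. Writing $\iota\colon \T^d \to M$ for the inclusion of a fibre, the homotopy exact sequence of this fibration gives an exact sequence of groups $\pi_1(\T^d)\xrightarrow{\iota_*}\pi_1(M)\xrightarrow{p_*}\pi_1(B)\to 1$; abelianising and tensoring with $\R$ (both being right-exact operations) yields the exact sequence
\[ H_1(\T^d;\R)\xrightarrow{\ \iota_*\ } H_1(M;\R)\xrightarrow{\ p_*\ } H_1(B;\R)\to 0. \]
Thus $H_1(B;\R)=\im{(\mathrm{coker}\,\iota_*)}$ in the sense that it is the cokernel of $\iota_*$, and the whole lemma reduces to proving that $\iota_*$ is surjective, i.e. that the fibre subspace $V:=\im{\iota_*}\subseteq H_1(M;\R)$ is all of $H_1(M;\R)$.

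To establish $V=H_1(M;\R)$ I would combine two facts. First, the map $\partial\alpha_H\colon \cO \to H_1(M;\R)$, $c'\mapsto h_{c'}$, is continuous and injective (as already noted, by \cite[Theorem 3.3]{FGS}); since $\dim H^1(M;\R)=\dim H_1(M;\R)=b_1(M)$ and $\cO$ is open, invariance of domain shows that the (nonempty) image $\partial\alpha_H(\cO)$ is an \emph{open} subset of $H_1(M;\R)$. Second, by Theorem~\ref{mainthm}(ii) and Remark~\ref{rem:1} the motion on each $\Lambda_{c'}$ is the restriction of the $1$-parameter subgroup $\exp(t\xi_{c'})$ of the fibre torus, so every orbit remains inside a single $\T^d$-fibre; consequently the rotation vector $h_{c'}$ is represented by cycles lying in a fibre, whence $h_{c'}\in \im{\iota_{c'*}}$, where $\iota_{c'}\colon\T^d\to M$ is the fibre inclusion for the bundle structure carried by $\Lambda_{c'}$.

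The key point — and the step I expect to be the main obstacle — is to verify that this fibre subspace does not depend on $c'$, so that all the rotation vectors land in the single subspace $V=\im{\iota_*}$. For this I would argue that the family $\{\Lambda_{c'}\}_{c'\in\cO}$ is continuous and that each $\Lambda_{c'}$ is carried onto $M$ by its graph projection, so the induced principal $\T^d$-actions on $M$ vary continuously with $c'$; the fibre inclusions $\iota_{c'}$ are then mutually homotopic over the connected set $\cO$, whence $\iota_{c'*}=\iota_*$ on homology and $\im{\iota_{c'*}}=V$ for every $c'$ (homology being a homotopy invariant, the image of the fibre class cannot jump). Granting this, $\partial\alpha_H(\cO)\subseteq V$; but a nonempty open subset of $H_1(M;\R)$ contains a ball and so cannot lie in any proper linear subspace, forcing $V=H_1(M;\R)$. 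Hence $\iota_*$ is surjective and, by the exact sequence above, $H_1(B;\R)=\set{0}$.
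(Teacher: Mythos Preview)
Your argument is essentially the paper's, recast in dual form: where the paper fixes $\pi_{c,*}\colon H_1(M;\R)\to H_1(B;\R)$ and argues that $\pi_{c,*}(\partial\alpha_H(\cO))$ is simultaneously open (since $\pi_{c,*}$ is a surjective linear map and $\partial\alpha_H(\cO)$ is open) and $\{0\}$, you work with the fibre inclusion and show $\im\iota_*$ contains the open set $\partial\alpha_H(\cO)$ --- these are equivalent via the exact sequence you wrote down, since $\im\iota_*=\ker\pi_{c,*}$. The point you single out as the main obstacle (constancy of the fibre subspace as $c'$ varies over $\cO$) is exactly the step the paper leaves implicit; your continuity/homotopy argument is a reasonable way to address it, and it is neither more nor less rigorous than what the paper supplies.
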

\begin{proof}
  Let $c \in \cO$. The rotation vector of $Y$ projects to $0$ in
  $H_1(B;\R)$, since $Y$ is tangent to the $\T^d$ fibres of
  $\Lambda_c$. But the projection map $\pi_c : \Lambda_c \to B$ is
  surjective on $H_1$ and $\pi_{c,*}(\partial \alpha_H(\cO))$ is open
  since $\pi_{c,*}$ is an open map. These facts imply $H_1(B;\R)$ is
  trivial.
\end{proof}

\begin{lemma}
  \label{le:unique-rotn-vec}
  For all $f \in F^* C^{\infty}(\R^n)$ and $c \in \cO$ the rotation
  set of $Y_f = X_f | \Lambda_c$ contains a unique point.
\end{lemma}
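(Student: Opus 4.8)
The plan is to prove that the Schwartzman rotation set of $Y_f$ reduces to a point by showing that the pairing $\langle [\alpha],\rho(\mu)\rangle=\int_{\Lambda_c}\alpha(Y_f)\,d\mu$ is independent of the $Y_f$-invariant probability measure $\mu$ for every class $[\alpha]\in H^1(\Lambda_c;\R)$. Since the rotation vector $\rho$ is affine in $\mu$ and the rotation set is its image (a compact convex set), measure-independence of all such pairings forces the set to be a single point.

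First I would pin down the shape of $Y_f$. Writing $f=g\circ F$, we have $\pb{f}{H}=\sum_i(\partial_i g\circ F)\,\pb{F_i}{H}\equiv 0$, so $X_f$ is an integral of motion and, by Proposition \ref{prop:invariance}, is tangent to $\cA^*_c=\Lambda_c$. As $\Lambda_c$ is a graph over all of $M$, one has $\cA_c(H)=M$, and Proposition \ref{prop2Sor09} gives $\pb{F_i}{F_j}\equiv 0$ on $\Lambda_c$. Since $\{Y_j\}$ frames $T\Lambda_c$, the derivative of $F_i$ along each $Y_j$ is $\pb{F_i}{F_j}=0$; hence $dF_i|\Lambda_c=0$ and, $\Lambda_c$ being connected, $F\equiv a$ is constant on it. Therefore $Y_f=X_f|\Lambda_c=\sum_{i=1}^n c_iY_i$ with \emph{constant} coefficients $c_i=\partial_i g(a)$.

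Next I would use the principal $\T^d$-bundle structure of Proposition \ref{thm:1}: $\bar{Z}=\T^d$ is generated by the vertical fields $Y_1,\dots,Y_d$, while $Y_{d+1},\dots,Y_n$ descend to a frame $\bar{Y}_{d+1},\dots,\bar{Y}_n$ of $TB$. Split $Y_f=Y_f^{v}+Y_f^{h}$ into its vertical and horizontal parts; since $[Y_i,Y_j]=0$ whenever $i\le d$ (a torus generator against a $\T^d$-invariant field) these commute, and $\bar{Z}$ commutes with the flow of $Y_f$. Averaging any invariant $\mu$ over $\bar{Z}$ yields a $\bar{Z}$-invariant, $Y_f$-invariant measure $\tilde\mu$ with the \emph{same} rotation vector: representing $[\alpha]$ by a $\bar{Z}$-invariant closed form makes $\alpha(Y_f)$ a $\bar{Z}$-invariant function, whose integral is unchanged under the averaging. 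So I may assume $\mu=\mathrm{Haar}\times\bar\mu$ for a $\bar{Y}_f$-invariant measure $\bar\mu$ on $B$. The vertical part is a rigid torus rotation with constant vector determined by $(c_1,\dots,c_d)$, so it contributes a fixed class $\iota_*\eta$; and by Lemma \ref{le:h1b} the projection of $\rho(\mu)$ to $H_1(B;\R)=0$ vanishes, confirming $\rho(\mu)$ is ``vertical''.

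The main obstacle is the horizontal contribution, and this is where I expect the real work to lie. Representing $[\alpha]$ by a $\bar{Z}$-invariant closed form $\alpha=\sum_k a_k\theta^k-\pi^*\beta$, where $\theta^k$ is the connection dual to the framing and $d\beta=\sum_k a_k\Omega^k$ is the curvature combination (exact precisely because $[\alpha]$ extends from a fibre class), a direct computation gives $\langle[\alpha],\rho(\mu)\rangle=\sum_{k\le d}a_kc_k-\int_B\beta(\bar{Y}_f)\,d\bar\mu$. Every term is manifestly measure-independent except the ``asymptotic holonomy'' $\int_B\beta(\bar{Y}_f)\,d\bar\mu$, which records the net fibre-drift accumulated by the horizontal lifts of the base flow and which, a priori, may depend on $\bar\mu$ when the base dynamics is complicated (recall the positive-entropy phenomena flagged in Remark \ref{re:2}). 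The plan to kill it is to exploit $b_1(B)=0$ (Lemma \ref{le:h1b}), which forces $H_1(B;\Z)$ to be finite, so that the relevant flat holonomy has finite, hence rationally trivial, image; passing to a finite cover of $B$ on which it trivializes, the horizontal lifts close up and carry no asymptotic cycle, which is equivalent to $\beta(\bar{Y}_f)$ being a flow-coboundary plus a constant (a Gottschalk–Hedlund type statement) and so makes $\int_B\beta(\bar{Y}_f)\,d\bar\mu$ a $\bar\mu$-independent constant. Granting this, $\rho(\mu)=\iota_*\eta$ for every invariant $\mu$, and the rotation set is the single point $\iota_*\eta$.
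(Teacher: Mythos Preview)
Your overall strategy coincides with the paper's: since $F$ is constant on $\Lambda_c$ one has $Y_f=\sum_i c_iY_i$ with constant coefficients, one splits $Y_f=Y_v+Y_h$ along the $\T^d$-connection of Proposition~\ref{thm:1}, the vertical piece contributes a fixed rotation vector via the centre/conjugacy mechanism of that proposition, and the horizontal piece contributes nothing thanks to Lemma~\ref{le:h1b}. The paper dispatches the horizontal step in a single clause --- ``Since $H_1(B;\R)=\{0\}$, the rotation set of $Y_h$ is trivial'' --- and simply records that $\phi^f_t=\phi^v_t\circ\phi^h_t$ with $\phi^v_t$ in the centre; it does not isolate or discuss your holonomy integral at all.

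Your attempt to justify that step explicitly, however, does not close. The plan to pass to a finite cover of $B$ so that ``the horizontal lifts close up'' presupposes that the holonomy of the $\T^d$-connection is \emph{flat} (i.e.\ homotopy-invariant, hence a homomorphism $\pi_1(B)\to\T^d$ factoring through the finite group $H_1(B;\Z)$); nothing in the setup gives flatness --- the brackets $[Y_i,Y_j]$ for $i,j>d$ need not be horizontal, so the curvature $\Omega^k$ need not vanish, and on a finite cover horizontal lifts of closed curves will generally still fail to close. Moreover, the Gottschalk--Hedlund step you invoke (to make $\beta(\bar Y_f)$ a flow-coboundary plus a constant) requires the base flow $\bar Y_f$ on $B$ to be minimal, or at least uniquely ergodic; you have no such control --- $\bar Y_f$ is an arbitrary constant-coefficient combination of a framing of a parallelisable manifold, and its dynamics can be quite complicated. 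So the argument, as written, has a genuine gap at precisely the point you yourself flagged as ``where I expect the real work to lie''.
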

\begin{proof}
  The proof of Theorem \ref{mainthm} shows that there is a
  $\T^d$-connection on $\Lambda_c$ that permits one to decompose $Y_f$
  into a vertical component $Y_v$ and a horizontal component
  $Y_h$. Because $\Lambda_c$ is a level set of $F$, $Y_v$
  (resp. $Y_h$) is a linear combination of the basis
  $Y_i$, $i=1,\ldots,d$ (resp. $Y_i$,$i=d+1,\ldots,n$) with
  \textem{constant} coefficients. Therefore, $Y_v$ and $Y_h$ commute
  and the flow of $Y_f$ is a product of their commuting flows:
  $\phi^f_t = \phi^v_t \circ \phi^h_t$ (and $\phi^v_t$ lies in the
  centre of $\Gamma$). Since $H_1(B;\R)=\set{0}$, the rotation set of
  $Y_h$ is trivial, so the rotation set of $Y_f$ equals that of $Y_v$,
  which is a singleton from the proof of Proposition \ref{thm:1}.
\end{proof}

Suppose now that $d > b_1(M)$. Let $G = H + F^* \alpha \in F^*
C^{\infty}(\R^n)$ be a Tonelli Hamiltonian such that for a residual
set of $c \in \cO$, the vertical component of $Y_G = X_G | \Lambda_c$
generates a dense $1$-parameter subgroup of the torus fibre. It is
straightforward to see that such $G$ exist. Lemma
\ref{le:unique-rotn-vec} implies that $\alpha_G | \cO$ is
differentiable and therefore $\partial \alpha_G | \cO$ is a
homeomorphism onto its image. If $d > b_1(M)$, then there are distinct
$c,c' \in \cO$ such that the rotation vectors $\rho(\Lambda_c) =
\rho(\Lambda_{c'})$, which contradicts the injectivity of $\partial
\alpha_G$. Therefore, $d = b_1(M)$.

Let $\kappa : \finitecover{M} \longrightarrow M$ be a finite covering. It is
claimed that $b_1(\finitecover{M})=b_1(M)$. 
\begin{align}
  \label{al:covering}
    \xymatrix{
      \finitecover{\Lambda}_{c} \ar@{^{(}->}[r] \ar@{->>}[d]_{\Kappa|\finitecover{\Lambda}_{c}}&
      T^*\finitecover{M} \ar@{->>}[r]\ar@{->>}[d]^{\Kappa} &
      \finitecover{M} \ar@{->>}[d]^{\kappa} \ar@{->}@/_5mm/[l]_{\finitecover{\eta}_c=\kappa^*\eta_c}\\
      \Lambda_{c} \ar@{^{(}->}[r] &
      T^* {M} \ar@{->>}[r] &
      M \ar@{->}@/^5mm/[l]^{\eta_c}
    }
\end{align}
Since the cotangent lift of $\kappa$, $\Kappa$, is a local
symplectomorphism, the Tonelli Hamiltonian $\finitecover{H}=\Kappa^*H$
is weakly integrable with the first-integral map
$\finitecover{F}=\Kappa^*F$. Let $c \in \cO$ be a cohomology class and
$\eta_c$ a solution to the Hamilton-Jacobi equation for $H$ whose
graph $\Lambda_c$ equals the Mather set $\cM^*_c$ (diagram
\eqref{al:covering}). The pullback
$\finitecover{\eta}_c=\kappa^*\eta_c$ solves the Hamilton-Jacobi
equation for $\finitecover{H}$ and its graph $\finitecover{\Lambda}_c$
is an invariant $C^1$ Lagrangian graph. By Proposition \ref{thm:1},
there is a $\finitecover{d}>0$ such that $\finitecover{\Lambda}_c$
admits the structure of a principal
$\T^{\finitecover{d}}$-bundle. This torus action is defined by
$\finitecover{d}$ commuting vector fields $\finitecover{Y}_i =
X_{\finitecover{F}_i} | \finitecover{\Lambda}_c$,
$i=1,\ldots,\finitecover{d}$ induced by the first-integral map
$\finitecover{F}$. Since $\Kappa$ is a local symplectomorphism,
$\Kappa | \finitecover{\Lambda}_c$ is a local diffeomorphism. This
shows that the dimension $\finitecover{d}$ equals $d$. By the previous
paragraph, weak integrability implies that
$\finitecover{d}=b_1(\finitecover{M})$ so $b_1(\finitecover{M})=b_1(M)$.

Let us prove that $M$ is a trivial principal $\T^d$-bundle. 
This argument is indebted to that of Sepe \cite{2011arXiv1106.4449S}. A
principal $\T^d$-bundle is classified up to isomorphism by a
classifying map
\begin{align}
  \label{eq:classifying-map}
  \xymatrix{
    &
    \T^d \ar@{_{(}->}[dl]\ar@{^{(}->}[d]\ar@{^{(}->}[dr]\\
    M=f^*E\T^d \ar[r]    \ar@{->>}[d]_{\pi_f}&
    E\T^d \ar@{-}[r]\ar@<-.5ex>@{-}[r]   \ar@{->>}[d]_{\pi}&
    \prod_{i=1}^d S^{\infty} \ar@{->>}[d]^{\text{Hopf fib.}}\\
    B \ar[r]^f &
    B\T^d \ar@{-}[r]\ar@<-.5ex>@{-}[r] &
    \prod_{i=1}^d \C{}P^{\infty}.
  }
\end{align}
The classifying map $f$ is null homotopic if and only if  the pullback bundle is
trivial. Classical obstruction theory shows that the single
obstruction to a null homotopy of $f$ is a cohomology class -- the
\textem{Chern class} -- with the following description. The trivial
section $* \mapsto * \times 0$ of $E\T^d$ restricted to its
$0$-skeleton extends over the $1$-skeleton. The obstruction to
extending this section over the $2$-skeleton defines a cohomology
class $\eta \in H^2(B\T^d;\pi_1(\T^d))=H^2(B\T^d;H_1(\T^d))$. By
naturality, the obstruction to extending the trivial section of
$f^*E\T^d$ over the $2$-skeleton is the cohomology class
$\eta_f=f^*\eta \in H^2(B;H_1(\T^d))$ -- called the \textem{Chern
  class}.

In terms of the $E_2$ page of the Leray-Serre spectral sequence with
$\Z$-coefficients for the bundle
$\T^d \hookrightarrow M \longrightarrow B$, one has the differential $d^{0,1}_2 :
E^{0,1}_2=H^1(\T^d) \longrightarrow E^{2,0}_2=H^2(B)$. It has been shown above
that the inclusion map $\T^d \hookrightarrow M$ is injective on $H_1$,
hence surjective on $H^1$. Since a class in $E^{0,1}_2$ survives to a
class in $E_{\infty}$ if and only if  it is in the kernel of $d^{0,1}_2$, the
differential $d^{0,1}_2$ must therefore vanish. Since the differential
$d^{2,0}_2$ vanishes, it follows that $H^2(B)$ survives to
$E_{\infty}$.

On the other hand, for any cohomology class $\phi \in H^1(\T^d)$, the
class $\eta \cup \phi = \kp{\eta}{\phi}$ is a class in $H^2(B\T^d)$
which satisfies $\pi^*(\eta \cup \phi) = 0$ in $H^2(E\T^d)$. By
naturality, the class $\eta_f \cup \phi \in H^2(B)$. This class, if
non-zero, survives to $E_{\infty}$. On the other hand, $\pi_f^*(\eta_f
\cup \phi) = 0$ in $H^2(M)$. This shows that $\eta_f \cup \phi = 0$ in
$H^2(B)$. Since the class $\phi$ was arbitrary, it follows that
$\eta_f$ vanishes. Therefore $M=f^* E\T^d$ is a trivial principal
$\T^d$-bundle.

{
  \begin{figure}[htb]
    \centering
    \ifpdf%
    \input{e2-page.pdf_tex}
    \else%
    \setlength{\unitlength}{0.00087489in}
\begingroup\makeatletter\ifx\SetFigFont\undefined%
\gdef\SetFigFont#1#2#3#4#5{%
  \reset@font\fontsize{#1}{#2pt}%
  \fontfamily{#3}\fontseries{#4}\fontshape{#5}%
  \selectfont}%
\fi\endgroup%
{\newcommand{\dashlinestretch}{30}
\begin{picture}(2130,1810)(0,-10)
\put(465,760){\blacken\ellipse{60}{60}}
\put(465,760){\ellipse{60}{60}}
\put(1365,310){\blacken\ellipse{60}{60}}
\put(1365,310){\ellipse{60}{60}}
\path(420,760)(1815,760)
\path(420,1210)(1815,1210)
\path(420,1660)(1815,1660)
\path(1365,1660)(1365,265)
\path(1815,1660)(1815,265)
\thicklines
\path(465,760)(1365,310)
\blacken\path(1244.252,336.833)(1365.000,310.000)(1271.085,390.498)(1289.868,347.566)(1244.252,336.833)
\thinlines
\path(465,1660)(465,310)(1815,310)
\path(915,1660)(915,265)
\put(315,760){\makebox(0,0)[b]{\smash{{\SetFigFont{8}{9.6}{\familydefault}{\mddefault}{\updefault}$1$}}}}
\put(315,1210){\makebox(0,0)[b]{\smash{{\SetFigFont{8}{9.6}{\familydefault}{\mddefault}{\updefault}$2$}}}}
\put(315,1660){\makebox(0,0)[b]{\smash{{\SetFigFont{8}{9.6}{\familydefault}{\mddefault}{\updefault}$3$}}}}
\put(915,60){\makebox(0,0)[b]{\smash{{\SetFigFont{8}{9.6}{\familydefault}{\mddefault}{\updefault}$1$}}}}
\put(1410,60){\makebox(0,0)[b]{\smash{{\SetFigFont{8}{9.6}{\familydefault}{\mddefault}{\updefault}$2$}}}}
\put(1815,60){\makebox(0,0)[b]{\smash{{\SetFigFont{8}{9.6}{\familydefault}{\mddefault}{\updefault}$3$}}}}
\put(2115,260){\makebox(0,0)[lb]{\smash{{\SetFigFont{8}{9.6}{\familydefault}{\mddefault}{\updefault}$H^*(B)$}}}}
\put(15,1660){\makebox(0,0)[rb]{\smash{{\SetFigFont{8}{9.6}{\familydefault}{\mddefault}{\updefault}$H^*(\T^d)$}}}}
\end{picture}
}
    \fi%
    \caption{$E_2$ page of the spectral sequence.}
    \label{fig:spectral-sequence}
  \end{figure}
}

Let us now prove (i--ii).

When $\dim M = 2$, it follows from (i) in Theorem \ref{mainthm} that $M$ is orientable and has genus $0$, therefore it must be $\T^2$.

When $\dim M = 3$, one cannot have $d<3$, since there are no
parallelisable $(3-d)$-dimensional manifolds with trivial first Betti
number. Therefore, $d=3$ and $M = \T^3$.

When $\dim M = 4$, there are two options: $\dim B \leq 2$ or $\dim B = 3$.
When $\dim B \leq 2$, $B$ has the homotopy type of a point, hence it
is a point, so $M = \T^4$. Assume that $\dim B = 3$. If $\pi_1(B)$ is a free product
of irreducible finitely-presented groups $G_i$ ($i=0,\ldots,g$), then
Kneser's theorem \cite{MR2098385} implies that $B =
\connectsum{B_0}{\cdots \connectsum{}{B_g}}$ where
$B_i$ is a closed $3$-manifold with $\pi_1(B_i) = G_i$. Since $H_1(B)
= \bigoplus_i H_1(B_i)$, each homology group $H_1(B_i)$ is
finite. According to \cite[Proposition 2.1]{MR2200344}, if $H_1(B)$ is
finite and $\pi_1(B_i)$ is not perfect for some $i$, then the
universal abelian covering $\abeliancover{B}$, or a $2$-fold cover
thereof, is a finite cover of $B$ which has first Betti number at
least $1$. Thus, the only case to be resolved is that when
$\pi_1(B_i)$ is perfect for all $i=0,\ldots,g$. By \cite[Remark at
bottom of p. 570]{MR2200344}, Stallings' theorem implies that
$G_i=\commutatorsg{G_i}$ is isomorphic to $\pi_1$ of the Klein
bottle -- which is absurd. This proves that $B$ is an irreducible
$3$-manifold. If $\pi_1(B)$ is infinite, then the virtual Haken
conjecture implies that $B$ has a finite covering with non-zero first
Betti number. Therefore, $\pi_1(B)$ is finite and so by the proof of
the Poincar\'e conjecture, $B$ is finitely covered by $S^3$.

Let us prove (iii). Let us denote $\Lambda_{c'}=\{(x,\lambda_{c'}(x)):\; x\in M\}$ as usual. Observe that the map:
\begin{eqnarray*}
\Psi: \cO \times M &\longrightarrow & T^*M \\
(c', x) &\longmapsto& \lambda_{c'}(x)
\end{eqnarray*}
is continuous. It is sufficient to show that if $c_n\rightarrow c'$ in $\cO$, then $\lambda_{c_n}$ converge uniformly to $\lambda_{c'}$. In fact, the sequence $\{\lambda_{c_n}\}_n$ is equilipschitz (it follows from Mather's graph theorem  \cite[Theorem 2]{Mather91}) and equibounded, therefore applying Ascoli-Arzel\`a theorem we can conclude that - up to selecting a subsequence - $\lambda_{c_n}$ converge uniformly to $\tilde{\lambda}= \eta_{c'}+ du$, for some $u\in C^1(M)$. Observe that since $H(x,\lambda_{c_n}(x))=\a_H(c_n)$ for all $x\in M$ and all $n$, and $\a_H$ is continuous, then $H(x,\tilde{\lambda}(x))=\a_H(c')$ for all $x$. Therefore, $u$ is a solution of Hamilton-Jacobi equation $H(x, \eta_{c'}+du)=\a_H(c')$. As we have observed in the beginning of this proof, for each $c'\in\cO$ there is a unique solution of this equation, hence $\tilde{\lambda} = \lambda_{c'}$. This concludes the proof of the continuity of $\Psi$. Notice that this could be also deduced from the fact that $\Psi$ is injective and semicontinuous.

If $\dim H^1(M;\R) \geq \dim M$, then the continuity of $\Psi$ implies that these Lagrangian graphs 
$\Lambda_{c'}$ foliate an open neighborhood of $\Lambda_c$. It follows from 
Proposition \ref{prop2Sor09} that the components of $F$ commute in this open region. Therefore, each $\Lambda_{c'}$ is an $n$-dimensional manifold which is invariant under the action of $n$ commuting vector fields, which are linearly independent at each point. It is a classical result that $\Lambda_{c'}$ is then diffeomorphic to an $n$-dimensional torus and that the motion on it is conjugate to a rotation (see for instance \cite{Arnoldbook}). 

\end{proof}

\section{Amenable groups, measures and rotation vectors}
\label{sec:amenable-group}

In this section it is assumed that $X$ is a compact, path-connected,
locally simply-connected metrizable space and $(G,\haarm[G])$ is a locally
compact, simply-connected, metrizable, amenable topological group with
Haar measure $\haarm[G]$. We will use $d$ to denote a metric on both spaces;
it will be assumed that the metric on $G$ is \textem{right}-invariant,
without loss of generality. The space of $\haarm[G]$-essentially bounded
measurable functions on $G$ is denoted by $\linfty{G}$. $\linfty{G}^*$
has a distinguished subspace of functionals invariant under $G$'s left
(resp. right) action; this subspace will be denoted by
$\linfty{G}^*_{G_-}$ (resp. $\linfty{G}^*_{G_+}$). A functional $\nu
\in \linfty{G}^*$ which satisfies $\nu(1)=1$ is called a
\textem{mean}. The set of left-invariant (resp. right-invariant)
means is denoted by $\means{G}{G_-}$ (resp. $\means{G}{G_+}$);
amenability of $G$ implies that both $\means{G}{G_\pm}$ is non-empty,
as is the intersection $\means{G}{}$.

Let $\abeliancover{\pi} : \abeliancover{X} \longrightarrow X$ be the universal
abelian covering space of $X$, \ie the regular covering space whose
fundamental group is $[\pi_1X, \pi_1X]$ and on which $H_1(X;\Z)$
(singular homology) acts as the group of deck transformations of
$\abeliancover{\pi}$.

Let $\phi : G \longrightarrow X$ be a uniformly continuous map (it is not assumed
that there is an action of $G$ on $X$). The simple-connectedness of
$G$ implies that there is a lift $\abeliancover{\phi}$ of $\phi$ to
$\abeliancover{X}$. It is well-known that the first singular
cohomology group of $X$ is naturally isomorphic to the group of
homotopy classes of maps from $X$ to $S^1$, denoted by $\hone{X}$. For
each $f \in \hone{X}$, let us construct the following commutative
diagram
\begin{align}
  \label{al:hone}
  \xymatrix{
    &
    \abeliancover{X} \ar@{->>}[d]_{\abeliancover{\pi}} \ar[r]|{\abeliancover{f}} &
    \R \ar@{->>}[d]^{p} \\
    G \ar[r]|{\,\phi\,} \ar[ru]|{\,\abeliancover{\phi}\,} \ar@/_4mm/[rr]|{\,g\,} \ar@/^11mm/[urr]|{\,\abeliancover{g}\,} &
    X \ar[r]|f \ar@{..>}[ru]|{\abeliancover{f}} &
    S^1
  }
\end{align}
where $p(x)=x \bmod 1$ and $\abeliancover{f}$ is a lift of $f$ to
$\abeliancover{X}$ --- the dotted diagonal line exists if and only if  $f$ is
null-homotopic. Define the map
\begin{align}
  \label{al:difference-op}
  \xymatrix@R=2mm{
    G \times G \ar[r]^\zeta &
    \R^1 &&
    (s,t) \ar@{|->}[r]^(.4)\zeta &
    g(st)-g(t)\,.
  }
\end{align}
{A priori}, $\zeta$ is a map into $S^1$, but the
simple-connectedness of $G$ implies there is a unique lift of the map in
\eqref{al:difference-op} that is identically zero when $s=1$ (the lift
is trivially $\abeliancover{g}(st)-\abeliancover{g}(t)$). For a fixed
$s \in G$, let $\zeta_s(t) = \zeta(s,t)$.

\begin{lemma}
  \label{lem:linfty}
  For each $s \in G$, $\zeta_s \in \linfty{G}$.
\end{lemma}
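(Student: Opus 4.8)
The plan is to show that $\zeta_s$ is continuous and uniformly bounded on $G$ by a constant depending only on $s$: continuity yields measurability, and the uniform bound yields membership in $\linfty{G}$. Continuity is immediate, since $\zeta_s(t)=\abeliancover{g}(st)-\abeliancover{g}(t)$ with $\abeliancover{g}=\abeliancover{f}\circ\abeliancover{\phi}:G\to\R$ continuous and $t\mapsto st$ continuous. So the substance of the lemma is the uniform bound, and that is where I would spend all the effort.

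First I would record that $g=f\circ\phi:G\to S^1$ is uniformly continuous: $\phi$ is uniformly continuous by hypothesis, and $f$, being continuous on the \emph{compact} metric space $X$, is uniformly continuous, so the composite is. Fix $\epsilon<\tfrac12$ and choose $\delta>0$ so that $d(a,b)<\delta$ in $G$ forces $d(g(a),g(b))<\epsilon$ in $S^1$. Since $G$ is path-connected (being simply connected), choose a path $\gamma:[0,1]\to G$ with $\gamma(0)$ equal to the identity and $\gamma(1)=s$. The crux of the whole argument is now the \emph{right}-invariance of the metric on $G$: it gives $d(\gamma(u)t,\gamma(u')t)=d(\gamma(u),\gamma(u'))$ for every $t\in G$, so that the translated paths $u\mapsto\gamma(u)t$ all move at a ``speed'' independent of $t$. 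By uniform continuity of $\gamma$ I may therefore fix a \emph{single} partition $0=u_0<\cdots<u_N=1$, depending on $s$ but not on $t$, such that $d(\gamma(u),\gamma(u'))<\delta$ whenever $u,u'$ lie in a common subinterval $[u_{j-1},u_j]$.

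Finally I would bound $\zeta_s$ by telescoping along this partition. For each fixed $t$, the continuous real-valued function $u\mapsto\abeliancover{g}(\gamma(u)t)$ is a lift of $u\mapsto g(\gamma(u)t)$. On each $[u_{j-1},u_j]$ the image of the latter lies within distance $\epsilon$ of the single point $g(\gamma(u_{j-1})t)$, hence inside an arc of length $<2\epsilon<1$; as such an arc lifts to disjoint intervals of the same length, the lift varies over $[u_{j-1},u_j]$ by less than $2\epsilon$, so $|\abeliancover{g}(\gamma(u_j)t)-\abeliancover{g}(\gamma(u_{j-1})t)|<2\epsilon$. Since $\gamma(u_0)t=t$ and $\gamma(u_N)t=st$, summing the telescoping differences gives
\[
  |\zeta_s(t)| = |\abeliancover{g}(st)-\abeliancover{g}(t)| \le \sum_{j=1}^N \bigl|\abeliancover{g}(\gamma(u_j)t)-\abeliancover{g}(\gamma(u_{j-1})t)\bigr| < 2N\epsilon ,
\]
a bound independent of $t$. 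Hence $\zeta_s$ is bounded, and being continuous it is measurable, so $\zeta_s\in\linfty{G}$.

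The only delicate point is that the partition must be chosen uniformly in $t$; this is precisely what right-invariance of the metric delivers, and it is the reason the hypothesis that the metric on $G$ be right-invariant was imposed. Everything else — uniform continuity of the composite, the elementary estimate on lifts of short $S^1$-arcs, and the telescoping — is routine.
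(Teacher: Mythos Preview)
Your proof is correct and follows essentially the same line as the paper's: both use that $g=f\circ\phi$ is uniformly continuous (compactness of $X$ plus the hypothesis on $\phi$), invoke right-invariance of the metric so that a partition chosen once works uniformly in $t$, and then bound $|\abeliancover{g}(st)-\abeliancover{g}(t)|$ by telescoping along a subdivided path from $t$ to $st$. The paper compresses this into a couple of sentences---it asserts directly that $\abeliancover{g}$ is uniformly continuous, picks an integer $N>d(s,1)/\delta$, and appeals to ``the triangle inequality'' for the telescoping step---whereas you spell out the path $\gamma$, the partition, and the short-arc lifting estimate explicitly; but the underlying argument is the same.
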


\begin{proof}
  Since $X$ is compact, $f$ is uniformly continuous. Since $\phi$ is
  assumed to be uniformly continuous, $g$ and therefore
  $\abeliancover{g}$ is uniformly continuous. Therefore, there is a
  $\delta>0$ such that if $a,b \in G$ and $d(a,b)<\delta$ then
  $|\abeliancover{g}(a)-\abeliancover{g}(b)|<1$. Let $N$ be an integer
  exceeding $d(s,1)/\delta$. Then the right-invariance of the metric
  $d$ implies that for all $t \in G$, $d(st,t)=d(s,1)<N \delta$, so by
  the triangle inequality, one concludes
  $|\abeliancover{g}(st)-\abeliancover{g}(t)|<N$. Thus $|\zeta_s(t)| <
  N$ for all $t \in G$.
\end{proof}

\begin{lemma}
  \label{lem:invariance-of-rotation-vector}
  Let $\nu \in \means{G}{G_-}$ be a left-invariant mean on $G$. If $\abeliancover{g}
  \in \linfty{G}$, then $\kp{\nu}{\zeta_s}=0$ for all $s \in G$. In
  particular, if
  \begin{enumerate}
  \item $f$ is null-homotopic; or
  \item $\im \abeliancover{\phi}$ is contained in a compact set,
  \end{enumerate}
  then $\kp{\nu}{\zeta_s}$ vanishes for all $s \in G$.
\end{lemma}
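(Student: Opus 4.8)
The plan is to reduce the statement to an unconditional computation for bounded lifts, and then to check that each of the two special hypotheses forces the real lift $\abeliancover{g}$ to be essentially bounded. First I would record the decisive elementary observation: for fixed $s$, the function $\zeta_s$ is the difference between a left-translate of $\abeliancover{g}$ and $\abeliancover{g}$ itself. Writing the left regular action as $(a\cdot h)(t)=h(a^{-1}t)$, one has $\zeta_s(t)=\abeliancover{g}(st)-\abeliancover{g}(t)=(s^{-1}\cdot\abeliancover{g})(t)-\abeliancover{g}(t)$, so that $\zeta_s=(s^{-1}\cdot\abeliancover{g})-\abeliancover{g}$ as functions on $G$. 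If $\abeliancover{g}\in\linfty{G}$ (which is how the hypothesis $g\in\linfty{G}$ must be read, since $g$ itself takes values in $S^1$), then both summands lie in $\linfty{G}$, and the left-invariance of $\nu\in\means{G}{G_-}$ together with linearity gives $\kp{\nu}{\zeta_s}=\kp{\nu}{s^{-1}\cdot\abeliancover{g}}-\kp{\nu}{\abeliancover{g}}=0$. This establishes the first assertion.

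The conceptual point --- and the reason the hypothesis cannot simply be dropped --- is that Lemma \ref{lem:linfty} guarantees only that the difference $\zeta_s$ is bounded, not that $\abeliancover{g}$ itself is bounded. The splitting above is legitimate only once each term is individually a member of $\linfty{G}$, and that is exactly what boundedness of $\abeliancover{g}$ supplies. Hence the entire content of the ``in particular'' clause is to exhibit two natural circumstances in which $\abeliancover{g}\in\linfty{G}$, after which the first part applies verbatim.

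For case (1), if $f$ is null-homotopic then the dotted diagonal of diagram \eqref{al:hone} exists, \ie there is a continuous lift $\tilde f\colon X\to\R$ with $p\circ\tilde f=f$. Since $X$ is compact, $\tilde f(X)\subset\R$ is compact and $\tilde f$ is bounded; therefore $\tilde f\circ\phi\colon G\to\R$ is a bounded continuous lift of $g$. As $G$ is connected (being simply-connected), any two real lifts of $g$ differ by a constant integer, so $\abeliancover{g}$ differs from $\tilde f\circ\phi$ by a constant and is bounded, giving $\abeliancover{g}\in\linfty{G}$. For case (2), commutativity of \eqref{al:hone} along the top edge gives $\abeliancover{g}=\abeliancover{f}\circ\abeliancover{\phi}$; if $\im{\abeliancover{\phi}}$ lies in a compact set $K\subset\abeliancover{X}$, then $\abeliancover{f}(K)$ is compact in $\R$, so $\abeliancover{g}(G)\subseteq\abeliancover{f}(K)$ is bounded and again $\abeliancover{g}\in\linfty{G}$. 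In either case the first part yields $\kp{\nu}{\zeta_s}=0$ for all $s\in G$.

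The only step demanding genuine care --- which I would flag as the main, if mild, obstacle --- is the measure-theoretic bookkeeping behind the splitting: one must verify that left translation by a fixed $s$ carries $\linfty{G}$ into itself, so that $s^{-1}\cdot\abeliancover{g}$ is a legitimate argument for $\nu$. This holds because left translation preserves the class of $\haarm[G]$-null sets, hence the essential supremum, irrespective of whether $\haarm[G]$ is a left or a right Haar measure. Everything else in the argument is formal.
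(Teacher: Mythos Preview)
Your proof is correct and follows essentially the same approach as the paper: split $\zeta_s$ as a difference of left-translates of $\abeliancover{g}$, invoke left-invariance of $\nu$, and then verify that null-homotopy of $f$ or precompactness of $\im\abeliancover{\phi}$ forces $\abeliancover{g}\in\linfty{G}$. Your version is more explicit than the paper's (which dispatches the whole thing in three lines), and your remark that the hypothesis ``$g\in\linfty{G}$'' must be read as ``$\abeliancover{g}\in\linfty{G}$'' is a helpful clarification of a minor abuse in the statement.
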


\begin{proof}
  If $\abeliancover{g} \in \linfty{G}$, then $\kp{\nu}{\zeta_s}=\kp{s_*\nu}{\abeliancover{g}} -
  \kp{\nu}{\abeliancover{g}} = 0$ by left-invariance of $\nu$. If $f$ is
  null-homotopic, then the image of $\abeliancover{f}$ is a compact
  subset of $\R$, so $\abeliancover{g} \in \linfty{G}$; likewise, if $\im
  \abeliancover{\phi}$ has compact closure.
\end{proof}

\begin{lemma}
  \label{lem:dependence-on-phi}
  Let $\phi,\phi' : G \longrightarrow X$ be uniformly continuous maps. If there is
  a $K>0$ such that their lifts
  $\abeliancover{\phi},\abeliancover{\phi}' : G \longrightarrow \abeliancover{X}$
  satisfy $d(\abeliancover{\phi}(s),\abeliancover{\phi}'(s)) < K$ for
  all $s \in G$, then $\kp{\nu}{\zeta_s - \zeta'_s}$ vanishes for all
  $s \in G$ and $\nu \in \means{G}{G_-}$.
\end{lemma}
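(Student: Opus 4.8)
The plan is to reduce the statement to the same coboundary manipulation used in the proof of Lemma~\ref{lem:invariance-of-rotation-vector}. Write $\abeliancover{g}=\abeliancover{f}\circ\abeliancover{\phi}$ and $\abeliancover{g}'=\abeliancover{f}\circ\abeliancover{\phi}'$ for the two lifted compositions, so that (in the canonical real lifts of Lemma~\ref{lem:linfty}) $\zeta_s(t)=\abeliancover{g}(st)-\abeliancover{g}(t)$ and $\zeta'_s(t)=\abeliancover{g}'(st)-\abeliancover{g}'(t)$. Setting $h:=\abeliancover{g}-\abeliancover{g}'=\abeliancover{f}\circ\abeliancover{\phi}-\abeliancover{f}\circ\abeliancover{\phi}'$, a direct computation gives
\[
  (\zeta_s-\zeta'_s)(t)=h(st)-h(t)\qquad\text{for all }s,t\in G,
\]
that is, $\zeta_s-\zeta'_s$ is the coboundary of the single function $h$. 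Once I know $h\in\linfty{G}$, left-invariance of $\nu$ finishes the proof exactly as before, so the entire content of the lemma is the verification that $h$ is bounded.

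First note that $h$ is continuous: the lifts $\abeliancover{\phi},\abeliancover{\phi}'$ are continuous because $\phi,\phi'$ are, and $\abeliancover{f}$ is continuous, so $h$ is Borel and hence $\haarm[G]$-measurable. The only issue is boundedness, and this is exactly where the hypothesis $d(\abeliancover{\phi}(s),\abeliancover{\phi}'(s))<K$ enters. I claim that $\abeliancover{f}$ varies by at most a fixed constant $C=C(K)$ between any two points of $\abeliancover{X}$ within distance $K$; granting this, $|h(s)|=|\abeliancover{f}(\abeliancover{\phi}(s))-\abeliancover{f}(\abeliancover{\phi}'(s))|\le C$ for every $s\in G$, so $\|h\|_\infty\le C$ and $h\in\linfty{G}$.

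The bounded-variation claim is the crux, and it follows from a chaining argument over the covering $\abeliancover{\pi}$ that exploits compactness of $X$ downstairs. Since $X$ is compact, $f$ is uniformly continuous, so there is a scale $\delta>0$ with two properties: every subset of $X$ of diameter $<\delta$ lies in some evenly covered open set, and $f$ varies by less than $1/2$ on such a set. Consequently, on each connected component (sheet) of the $\abeliancover{\pi}$-preimage of such a set, $\abeliancover{f}$ is a genuine real-valued lift of $f$ whose image is connected and lands in $p^{-1}$ of an arc of length $<1/2$, hence is contained in a single interval of length $<1/2$; thus $\abeliancover{f}$ varies by less than $1/2$ across any subset of $\abeliancover{X}$ of diameter $<\delta$. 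Now take $y,y'\in\abeliancover{X}$ with $d(y,y')<K$; using that the metric on $\abeliancover{X}$ is the path-lift of $d_X$ (so $\abeliancover{\pi}$ is a local isometry), join $y,y'$ by a path of length $<K$ and cut it into $N=\lceil K/\delta\rceil$ subarcs of length $<\delta$. Each subarc has diameter $<\delta$, so $\abeliancover{f}$ changes by $<1/2$ across it; summing the $N$ increments gives $|\abeliancover{f}(y)-\abeliancover{f}(y')|<N/2=\lceil K/\delta\rceil/2=:C$, a constant depending only on $K$. This is the asserted bounded-variation estimate, and it is the only place the covering structure of $\abeliancover{X}$ is used.

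Finally I assemble the pieces. With $h\in\linfty{G}$ established, apply any left-invariant mean $\nu\in\means{G}{G_-}$: as in the proof of Lemma~\ref{lem:invariance-of-rotation-vector}, the coboundary form of $\zeta_s-\zeta'_s$ yields $\kp{\nu}{\zeta_s-\zeta'_s}=\kp{s_*\nu}{h}-\kp{\nu}{h}=0$ for every $s\in G$, which is precisely the conclusion. I expect Step~3 (the bounded-variation estimate for $\abeliancover{f}$ on the noncompact cover) to be the main obstacle; everything downstream is the identical cocycle-coboundary argument already exploited in the two preceding lemmata.
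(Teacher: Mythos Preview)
Your proof is correct and follows the same route as the paper's. The paper defines the identical function $\abeliancover{h}(t)=\abeliancover{f}\abeliancover{\phi}(t)-\abeliancover{f}\abeliancover{\phi}'(t)$, asserts (without further argument) that the distance bound forces $\abeliancover{h}\in\linfty{G}$, and then concludes $\kp{\nu}{\zeta_s-\zeta'_s}=\kp{s_*\nu}{\abeliancover{h}}-\kp{\nu}{\abeliancover{h}}=0$ by left-invariance, exactly as you do. Your chaining estimate for $\abeliancover{f}$ on $\abeliancover{X}$ is the natural way to justify the boundedness step the paper leaves implicit; it mirrors the chaining already used in Lemma~\ref{lem:linfty}, only carried out on the cover rather than on $G$.
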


\begin{proof}
  The proof of this lemma mirrors the preceding. By the assumption
  that $d(\abeliancover{\phi}(t), \abeliancover{\phi}'(t)) < K$ for
  all $t \in G$, one has that $\abeliancover{h}(t) :=
  \abeliancover{f}\abeliancover{\phi}(t) -
  \abeliancover{f}\abeliancover{\phi}'(t)$ lies in
  $\linfty{G}$. Therefore, $\kp{\nu}{\zeta_s - \zeta'_s} =
  \kp{s_*\nu}{\abeliancover{h}} - \kp{\nu}{\abeliancover{h}} = 0$ by
  left-invariance of the mean $\nu$.
\end{proof}

\begin{lemma}
  \label{lem:well-definedness-of-rotation-vector}
  Let $\nu \in \means{G}{G_-}$ be a left-invariant mean and $\phi : G
  \longrightarrow X$ a uniformly continuous map. For each $s \in G$, the map
  \begin{align}
    \label{al:f-map}
    f \longmapsto \kp{\nu}{\zeta_s}
  \end{align}
  (see \eqref{al:difference-op}) induces a linear function
  $\rho_s(\nu) : H^1(X;\R) \longrightarrow \R$. The function $\rho_s :
  \means{G}{G_-} \longrightarrow H_1(X;\R)$ is affine and continuous in the weak-*
  topology on $\linfty{G}^{**}$.
\end{lemma}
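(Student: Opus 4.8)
The plan is to establish the statement in two stages. First I would show that, for fixed $s \in G$ and $\nu \in \means{G}{G_-}$, the assignment $f \longmapsto \kp{\nu}{\zeta_s}$ factors through $H^1(X;\R)$ and so defines a linear functional, \ie an element $\rho_s(\nu) \in H_1(X;\R)$. Then I would verify that the resulting map $\nu \longmapsto \rho_s(\nu)$ is affine and weak-* continuous. The essential structural input is that $\zeta_s$ depends \emph{additively} on $f$, together with the vanishing results of Lemmas \ref{lem:linfty} and \ref{lem:invariance-of-rotation-vector}; everything else is formal.

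For the first stage I would recall that $\hone{X}=[X,S^1]$ is an abelian group under the pointwise operation of $S^1$, naturally isomorphic to $H^1(X;\Z)$, and write the group operation additively. Writing $g_f = f\circ\phi$, the sum of two lifts is a lift of the product, so the lifts satisfy $\abeliancover{g}_{f+f'} = \abeliancover{g}_f + \abeliancover{g}_{f'}$ up to an integer constant; substituting into the defining formula $\zeta_s(t)=\abeliancover{g}(st)-\abeliancover{g}(t)$ gives the additivity $\zeta_s^{f+f'} = \zeta_s^{f} + \zeta_s^{f'}$ (the integer ambiguity cancels in the difference). Evaluating the linear functional $\nu$ shows $f \longmapsto \kp{\nu}{\zeta_s}$ is a homomorphism $\hone{X}\to\R$. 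By case (1) of Lemma \ref{lem:invariance-of-rotation-vector} it vanishes on null-homotopic $f$, hence is well-defined on homotopy classes, \ie on $H^1(X;\Z)$. A homomorphism into the torsion-free divisible group $\R$ factors through the free part and extends uniquely to $H^1(X;\Z)\otimes\R \cong H^1(X;\R)$, producing a linear functional; under the duality $\bigl(H^1(X;\R)\bigr)^* \cong H_1(X;\R)$ (valid since the cohomology is finite-dimensional for the spaces under consideration) this functional is the claimed $\rho_s(\nu)\in H_1(X;\R)$, characterised by $\kp{\rho_s(\nu)}{[f]}=\kp{\nu}{\zeta_s}$.

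For the second stage, affineness is immediate: for each fixed $f$ and $s$ the map $\nu\longmapsto\kp{\nu}{\zeta_s}$ is evaluation of $\nu$ at the fixed element $\zeta_s\in\linfty{G}$ (Lemma \ref{lem:linfty}), hence linear in $\nu$, so its restriction to the convex set $\means{G}{G_-}$ is affine; pairing against a basis of $H^1(X;\R)$ shows $\rho_s$ is affine. For weak-* continuity I would unwind the definition: a net $\nu_\alpha\to\nu$ in the weak-* topology satisfies $\kp{\nu_\alpha}{h}\to\kp{\nu}{h}$ for every $h\in\linfty{G}$; taking $h=\zeta_s$ gives $\kp{\rho_s(\nu_\alpha)}{[f]}\to\kp{\rho_s(\nu)}{[f]}$ for each $f$, and since $H_1(X;\R)$ is finite-dimensional, convergence of every coordinate forces $\rho_s(\nu_\alpha)\to\rho_s(\nu)$.

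The only point requiring genuine care is the first stage: that the construction descends from individual maps $f$ to cohomology classes and extends $\R$-linearly. This rests entirely on the additivity of $\abeliancover{g}_f$ in $f$ and on the vanishing of $\kp{\nu}{\zeta_s}$ for null-homotopic $f$; once these are in hand, the passage to $H^1(X;\R)$ and the identification of its dual with $H_1(X;\R)$ are routine, modulo the standing finite-dimensionality of $H^1(X;\R)$ (which holds for the compact manifolds to which the lemma is applied). The affineness and continuity are then formal consequences of $\zeta_s\in\linfty{G}$ and the definition of the weak-* topology.
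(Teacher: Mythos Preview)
Your proof is correct and follows essentially the same route as the paper: additivity of $\zeta_s$ in $f$, well-definedness on homotopy classes, extension by $\R$-linearity, and then affineness/continuity from the bilinear pairing. The only cosmetic difference is that you deduce well-definedness on homotopy classes by invoking case~(1) of Lemma~\ref{lem:invariance-of-rotation-vector} (vanishing on null-homotopic $f$) together with additivity, whereas the paper argues it directly by bounding $|\abeliancover{f}-\abeliancover{f'}|$ for homotopic representatives and using left-invariance of $\nu$; the two arguments are essentially the same computation, and your version has the minor advantage of not repeating work already done in the preceding lemma.
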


\begin{proof}
  It suffices to show that this map is additive on
  $H^1(X;\Z)=\hone{X}$, since it is extended by multiplicativity to a
  map on $H^1(X;\R)$. First, let us show the map is well-defined on
  homotopy classes. Let $f,f'$ be representatives of the homotopy
  class $[f]$. By compactness of $X \times [0,1]$, there is an $N>0$
  such that $|\abeliancover{f}(x)-\abeliancover{f'}(x)|<N$ for all $x
  \in \abeliancover{X}$. Therefore, $|\abeliancover{g}(st)-\abeliancover{g}'(st)|<N$ and
  $|\abeliancover{g}(t)-\abeliancover{g}'(t)|<N$ for all $t \in G$ (using the obvious notation), so
  both $s^*(\abeliancover{g}-\abeliancover{g}')$ and $\abeliancover{g}-\abeliancover{g}'$ are in $\linfty{G}$. Thus,
  $\kp{\nu}{\zeta_s-\zeta'_s} = \kp{s_*\nu}{\abeliancover{g}-\abeliancover{g}'} - \kp{\nu}{\abeliancover{g}-\abeliancover{g}'} =
  0$ by left-invariance of $\nu$. This proves the map \eqref{al:f-map}
  is well-defined on $\hone{X}$.

  To prove that the map \eqref{al:f-map} is additive, let $f,h : X \longrightarrow S^1$
  be representatives of the homotopy classes $[f],[h]$. The homotopy
  class $[f]+[h]$ is represented by $[f+h]$. From the diagram
  \eqref{al:hone}, it is clear that $\zeta^{f+h} = \zeta^f + \zeta^h$
  where $\zeta^{\bullet}$ denotes $\zeta$ constructed with
  $\bullet$. This suffices to prove additivity, and that suffices to
  show that $\rho_s(\nu)$ is a linear form on $H^1(X;\R)$.

  Since the pairing defining $\rho_s(\nu)$ is the bilinear pairing
  between $\linfty{G}^*$ and $\linfty{G}$, it follows that $\rho_s$ is
  an affine map that is continuous in the weak-* topology on linear
  maps $\Hom{\linfty{G}^* ; H^1(X;\R)^*}$.
\end{proof}

\begin{definition}
  \label{def:rotation-set}
  Let $s \in G$. The set
  \begin{align}
    \label{al:rotation-set}
    R_s = \rho_s\left( \means{G}{G_-} \right)
  \end{align}
  is the rotation set of the left translation $s$.
\end{definition}

\begin{theorem}
  \label{thm:rotation-set}
  The map $\rho : G \longrightarrow \Hom{\means{G}{G_-}; H_1(X;\R)}$ is
  continuous. For each $s \in G$, the rotation set $R_s$ is a compact,
  convex subset of $H_1(X;\R)$. The rotation-set map
  \begin{align}
    \label{al:rotation-set-map}
    s \longmapsto R_s
  \end{align}
  is an upper semi-continuous set function.
\end{theorem}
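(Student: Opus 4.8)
The plan is to deduce all three assertions from a single uniform estimate controlling how the family $\{\zeta_s^f\}$ varies with $s\in G$, together with the affine weak-* continuity of $\rho_s$ already recorded in Lemma~\ref{lem:well-definedness-of-rotation-vector}. Throughout I would fix integral classes $f_1,\ldots,f_b\in\hone{X}=H^1(X;\Z)$ whose images form a basis of $H^1(X;\R)$, so that a map in $\Hom{\means{G}{G_-};H_1(X;\R)}$ is determined by the $b$ scalar functionals $\nu\mapsto\kp{\nu}{\zeta_s^{f_j}}$; I equip this space with the topology of uniform convergence on the set $K:=\means{G}{G_-}$, which is weak-* compact (a weak-* closed, convex subset of the unit ball of $\linfty{G}^*$, compact by Banach--Alaoglu) and non-empty by amenability of $G$.

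First I would prove continuity of $\rho$. Fix $f\in H^1(X;\Z)$ and write $\hat g:=\hat f\circ\hat\phi$ for the associated lift, so that $\zeta_s^f(t)=\hat g(st)-\hat g(t)$. For $s,s'\in G$ the two copies of $\hat g(t)$ cancel, giving $\zeta_s^f(t)-\zeta_{s'}^f(t)=\hat g(st)-\hat g(s't)$. The crucial point is that right-invariance of the metric yields $d(st,s't)=d(s,s')$ for every $t\in G$; combined with the uniform continuity of $\hat g$ established in the proof of Lemma~\ref{lem:linfty}, this shows that $\|\zeta_s^f-\zeta_{s'}^f\|_\infty\to 0$ as $s'\to s$, at a rate independent of $t$. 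Since every mean is a norm-$1$ functional, $\sup_{\nu\in K}|\kp{\nu}{\zeta_s^{f}}-\kp{\nu}{\zeta_{s'}^{f}}|\le\|\zeta_s^f-\zeta_{s'}^f\|_\infty$; applying this to the finitely many basis classes $f_j$ shows that $\rho_{s'}\to\rho_s$ uniformly on $K$, which is exactly continuity of $\rho$.

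For the second assertion I would argue that $\rho_s:K\to H_1(X;\R)$ is affine and weak-* continuous (Lemma~\ref{lem:well-definedness-of-rotation-vector}), so that $R_s=\rho_s(K)$ is the image of a compact convex set under a continuous affine map, hence itself compact and convex. For the upper semi-continuity of $s\mapsto R_s$ I would combine continuity of $\rho$ with the compactness of each $R_s$: given $s_0$ and an open set $U\supseteq R_{s_0}$, compactness of $R_{s_0}$ furnishes an $\varepsilon>0$ with the $\varepsilon$-neighbourhood of $R_{s_0}$ still contained in $U$, while continuity of $\rho$ furnishes a neighbourhood $V$ of $s_0$ with $\sup_{\nu\in K}|\rho_s(\nu)-\rho_{s_0}(\nu)|<\varepsilon$ for all $s\in V$. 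Any $r=\rho_s(\nu)\in R_s$ then lies within $\varepsilon$ of $\rho_{s_0}(\nu)\in R_{s_0}$, hence in $U$; so $R_s\subseteq U$ for every $s\in V$.

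The main obstacle is the uniform estimate in the second paragraph: one needs the variation $\zeta_s^f-\zeta_{s'}^f$ to be small in $\linfty{G}$-norm, i.e.\ uniformly in $t$, and this is precisely where right-invariance of $d$ (collapsing $d(st,s't)$ to the constant $d(s,s')$) is indispensable -- a left-invariant metric would not cooperate with the left-invariant means in this way. Once this estimate is in hand the remaining steps are soft: Banach--Alaoglu and the ``continuous affine image of a compact convex set'' principle give the second assertion, and the standard tube argument (a compact set inside an open set admits an $\varepsilon$-neighbourhood still inside it) upgrades continuity of $\rho$ to upper semi-continuity of $s\mapsto R_s$.
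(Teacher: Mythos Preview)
Your argument is correct and follows essentially the same route as the paper: convergence $\zeta_{s_n}\to\zeta_s$ in $\linfty{G}$ (via uniform continuity of $\abeliancover{g}$ and right-invariance of the metric) gives continuity of $\rho$, and then Banach--Alaoglu plus the affine weak-* continuity of $\rho_s$ from Lemma~\ref{lem:well-definedness-of-rotation-vector} yields compactness and convexity of $R_s$. Your write-up is in fact more complete than the paper's: the paper asserts $\zeta_{s_n}\to\zeta_s$ in $\linfty{G}$ without spelling out the right-invariance step you isolate, and it does not prove the upper semi-continuity claim at all, whereas your tube argument (which genuinely requires the \emph{uniform} convergence on $K$ that you establish, not merely pointwise convergence) fills that gap.
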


\begin{proof}
  If $s_n \to s$ in $G$, then for a fixed $f : X \longrightarrow S^1$, one sees
  that $\zeta_{s_n} \to \zeta_s$ in $\linfty{G} \cap
  C^0(G;\R)$. Therefore, for any $\nu \in \means{G}{G_-}$,
  $\kp{\rho_{s_n}(\nu) }{ [f] } \longrightarrow \kp{ \rho_s(\nu) }{ [f] }$. This
  proves $\rho$ is continuous in the weak-* topology.

  Clearly $\means{G}{G_-}$ is convex. Since $\means{G}{G_-} \subset
  \linfty{G}^*$ is a closed subset of the unit ball in $\linfty{G}^*$,
  it is a compact set in the weak-* topology. Since $\rho_s$ is
  continuous and affine, its image is compact and convex.
\end{proof}

\subsection{Examples}
\label{sec:examples-rotation-sets}

Let us compute some rotation sets.

\subsubsection{Translations on tori}
\label{sec:translations-on-tori}

Let $X=\T^n$ and let $G=\R^n=\universalcover{X}$ be the universal
covering group acting in the tautological manner; the map $\phi$ is
the orbit map of $\theta_0 \in \T^n$. A cohomology class $f \in
\hone{X}$ has a canonical representative, \viz  $f(\theta) =
\kp{v}{\theta} \bmod 1$ where $v \in \Hom{\Z^n;\Z}$. One arrives at
the map $\universalcover{g}(t) = \kp{v}{t+\universalcover{\theta}_0}$
and $\zeta_s(t) = \kp{v}{s}$ -- which is independent of $t \in G$ --,
whence the mean of $\zeta_s$ equals $\kp{v}{s}$ for any mean $\nu \in
\means{G}{G}$. If one employs the tautological isomorphism between the
real homology (resp. cohomology) group of $\T^n$ and $\R^n$
(resp. $\Hom{\R^n;\R}$), one obtains 
\[
\rho_s(\nu) = s
\]
for all $s \in G$, $\nu \in \means{G}{G_-}$. 

We note that this calculation computes the rotation vector/set of a
subgroup, given a mean on the whole group. Lemma
\ref{lem:iso-rotational} below shows that there is no loss of
generality.

\subsubsection{Translations on quotients of contractible amenable Lie
  groups of \typee{}}
\label{sec:translations-on-conamena-folds}

Let $G$ be a contractible, amenable Lie group of \typee{} (hence a
solvable Lie group of \typee{}), $\Gamma \lhd G$ be a co-compact
subgroup and $X=\Gamma \backslash G$. Let $g,g' \in G$ and let $\phi :
G \longrightarrow X$ be the map $\phi(t)=\Gamma gt^{-1}g'$. Let $N$ be the commutator
subgroup of $G$; it is known that $\Gamma \cap N$ is a lattice in $N$,
that the commutator subgroup of $\Gamma$ is of finite index in $\Gamma
\cap N$ and therefore $\Gamma N$ is closed subgroup of $G$ \cite[Lemma
3]{MR1868577}. The map $F : X \longrightarrow N\backslash X$ is therefore a
submersion onto a torus whose dimension is the codimension of $N$ in
$G$. From the fact that the derived subgroup of $\Gamma$ is of finite
index in $\Gamma \cap N$, one sees that $\hone{X} = F^*\hone{N
  \backslash X}$.

Therefore, we have reduced the problem to the case of a translation on
a torus, whence $\rho_s(\nu) = -Ns$ in the simply-connected abelian
Lie group $N\backslash G$, $\nu \in \means{G}{G_-}$.

\subsubsection{Translations on quotients of amenable Lie groups of \typee{}}
\label{sec:translations-on-amena-folds}

The situation with simply-connected amenable Lie groups of \typee{} is
somewhat more complicated than the previous example, as exemplified by
\cite[Examples 1 \& 2]{MR1868577}. These examples show how the first
Bieberbach theorem may fail, but in these examples the Levi
decomposition is trivial: the groups themselves are solvable and one
might be led to believe that this is the only way that such
pathological examples can arise.

\begin{example*}
Let us give an example where the Levi decomposition is
non-trivial and the first Bieberbach theorem fails. That is, let us
give an example where $G=SK$ is a simply-connected amenable Lie group
of \typee{} where $S$ is its solvable radical and $K$ is a maximal
compact subgroup, and $\Gamma < G$ is a lattice subgroup such that
$\Gamma \cap S$ is not a lattice subgroup of $S$.

Let $k > 2$ be integers and let $N$ be the nilpotent Lie group
whose multiplication is defined by
\begin{align}
  \label{al:hr-multn}
  (x_1,y_1,z_1) \cdot (x_2,y_2,z_2) &= (x_1+x_2,y_1+y_2,z_1+z_2+\frac{1}{2}(x_1 \otimes y_2-x_2 \otimes y_1)) \\
  &\phantom{=} \textrm{where }x_i,y_i \in \R^k, z_i \in \R^k \otimes \R^k.\notag
\end{align}
The cyclic group generated by
\begin{align}
  \label{al:n-cyclic}
  a &=
  \begin{bmatrix}
    2 & 1 & 0\\1 & 1 & 0\\0 & 0 & 1
  \end{bmatrix}
\end{align}
acts as a group of automorphisms of $N$, and this group is a discrete
subgroup of a $1$-parameter group of automorphisms $A$. Let $S=NA$, a
solvable group of \typee{}. On the other hand, let $K$ be the
universal covering group of $\SO{k} \times \SO{k}$ (since $k > 2$,
$K$ is compact) and let $K$ act on $N$ via
\begin{align}
  \label{al:k-action-on-n}
  \kappa \cdot g &= (u\cdot x, v \cdot y, (u \otimes v) \cdot z) \\
  &\phantom{=}\textrm{where }g=(x,y,z) \in N, \kappa\in K \longmapsto (u,v) \in \SO{k} \times \SO{k}.\notag
\end{align}
This is an action by automorphisms of $N$ and this action commutes
with the action of $A$, so this action induces a natural action of $K$
on $S$. This suffices to describe the group $G=SK$, an amenable Lie
group of \typee{}.

The lattice subgroup $\Gamma$ is described as follows. Let
\[
N_{\Z}=\set{ g=(x,y,z) \in N \st x,y \in \Z^k, 2z \in \Z^k
  \otimes \Z^k}
\]
and observe that $a$ preserves $N_{\Z}$. Let $b \in K$ and let $\gamma
= ab$. The group $\Gamma$ generated by $\gamma$ and $N_{\Z}$ is
discrete and co-compact in $G$ for any choice of $b$. If $b$ is of
infinite order, then the intersection of $\Gamma$ with $S$ is just
$N_{\Z}$ and is \textem{not} a lattice in $S$. The projection of
$\Gamma$ to $K=S \backslash G$ is the group generated by $b$; if $b$
is chosen in general position, then the identity component of the
closure is a maximal torus.

This example shows how the first Bieberbach theorem can fail for
\typee{} amenable Lie groups. However, the representation of $K$ as a
group of automorphisms of $S$ is almost faithful, and this implies
many of the nice properties mentioned in the previous paragraph. On
the other hand, if one takes the amenable Lie group $G=\C^n \times
\SU{n}$ with the lattice subgroup $\Gamma$ generated by the set $\set{
  (e_j,\rho_j), (ie_j,\rho_j) \st j=1,\ldots,n}$ where each $\rho_j$
is a generic element in the maximal torus of diagonal matrices, then
one sees that the intersection of $\Gamma$ with $S$ is trivial and the
projection of $\Gamma$ onto $\SU{n}$ is dense in the maximal torus.
\end{example*}

Let $G=SK$ be a simply-connected, amenable Lie group where $S$ is its
radical and $K$ its maximal compact subgroup, and let $\Gamma < G$ be
a lattice subgroup. Let us consider two cases in successive
generality:

\subsubsection*{$K$ is virtually a subgroup of $\Aut{S}$}
In this case, we suppose that the action of $K$ on $S$ by conjugation
has a finite kernel. In this case, the machinery of
\cite{MR1868577,MR0123637} is applicable.

Let $S^*$ be the identity component of the closure of $\Gamma S$ in
$G$ and let $\Gamma^* = S^* \cap \Gamma$. By \cite[Lemma 3]{MR1868577}
and \cite{MR0123637}, one knows that $S^*$ is a solvable subgroup
containing $S$, $\Gamma^*$ is of finite index in $\Gamma$, $S
\backslash S^*$ is a torus subgroup, $T$, of $K$, the nilradical of
$S^*$ equals the nilradical $R$ of $S$, $\Gamma \cap R$ is a lattice
subgroup of $R$. Likewise, the derived subgroup of $S$,
$N=\commutatorsg{S}=\commutatorsg{S^*}$, intersects $\Gamma$ in a
lattice subgroup of $N$. This information is summarised in the
commutative diagram \eqref{al:comm-dia-amgp}, 
where $\Beta=\Gamma \cap N, F=\Beta \backslash N, Z=\Beta \backslash
G$, $T^*=(\Beta \backslash N) \backslash (N \backslash S^*)$ (a
torus) and $A=S \backslash S^*$.

\begin{align}
  \label{al:comm-dia-amgp}
  \xymatrix@C=2.5mm@R=1.5mm{
    &&&
    \Beta \ar@{->}[rrr] \ar@{->}[ddd] \ar@{->}[rd] &&&
    \Beta \ar@{->}'[d]'[dd][ddd] \ar@{->}[rd]
    \\
    &&& &
    N \ar@{->}[rrr] \ar@{->}[ddd] \ar@{->>}[rd] &&&
    G \ar@{->>}[rrr] \ar@{->}'[d][ddd] \ar@{->>}[rd] &&&
    N \backslash G = A K \ar@{->>}[ddd]
    \\
    &&& &&
    F \ar@{->}[rrr] \ar@{->}[ddd] &&&
    Z \ar@{->>}[ddd] \ar@{->>}[urr]
    \\
    \Beta \backslash \Gamma^* \ar@{->}[rd] &&&
    \Gamma^* \ar@{->}'[r]'[rr][rrr] \ar@{->}'[d]'[dd][ddd] \ar@{->}[rd] \ar@{->>}[lll] &&&
    \Gamma^* \ar@{->}'[d]'[dd][ddd] \ar@{->}[rd]
    \\
    &
    N \backslash S^* \ar@{->>}[rd] &&&
    S^* \ar@{->}'[r][rrr] \ar@{->}'[d][ddd] \ar@{->>}[rd] \ar@{->>}[lll] &&&
    G   \ar@{->>}'[r][rrr] \ar@{->}'[d][ddd] \ar@{->>}[rd] &&&
    S^* \backslash G = T \backslash K \ar@{->>}[ddd]
    \\
    &&
    T^* &&&
    Y^* \ar@{->}[rrr] \ar@{->>}[ddd] \ar@{->>}[lll] &&&
    X^* \ar@{->>}[ddd] \ar@{->>}[urr]
    \\
    &&&
    \Gamma \ar@{->}'[r]'[rr][rrr] \ar@{->}[rd] &&&
    \Gamma \ar@{->}[rd]
    \\
    &&& &
    \overline{\Gamma S} \ar@{->}'[r][rrr] \ar@{->>}[rd] &&&
    G   \ar@{->>}'[r][rrr] \ar@{->>}[rd] &&&
    \overline{\Gamma S} \backslash G = WT \backslash K
    \\
    &&& &&
    Y \ar@{->}[rrr] &&&
    X \ar@{->>}[urr]
  }
\end{align}
In diagram \eqref{al:comm-dia-amgp},
all southeast sequences are fibrations with discrete fibre (covering
spaces), all eastern sequences are fibrations, as are the backwards
$L$ sequences. In particular, $X^*$ is a finite regular covering space
of $X$ which is fibred by the solvmanifold $Y^*$ over the
$K$-homogeneous space $T \backslash K$; the solvmanifold $Y^*$ is
itself fibred by the nilmanifold $F$ over the torus $T^*$. Since $S^*$
is the identity component of $\overline{\Gamma S}$, the group
$W=\Gamma^* \backslash \Gamma$ permutes the components of
$\overline{\Gamma S}$, which shows that $Y^*=Y$, so $X$ is fibred by
solvmanifolds, also.

Since $WT \backslash K$ has finite fundamental group, its first
cohomology group over $\Z$ vanishes. Therefore, the Leray-Serre
spectral sequence for the fibring of $X$ by $Y$ shows that the
restriction to a fibre induces an injection of $H^1(X;\R)$ into
$H^1(Y;\R)$ (the image is the kernel of $d_2^{0,1}$ in the figure
\ref{fig:spectral-sequence}). The fibring of $Y$ by the nilmanifold
$F$ over the torus $T^*$ is exactly as described in the previous
example. In particular, the projection map induces an isomorphism of
$H^1(Y;\R)$ and $H^1(T^*;\R)$. Since $S^*=ST$, we see that $N
\backslash S^* = AT$ where $A=N \backslash S$. Since $T$ is
contractible in $G$, one sees that the first real homology group of
$X^*$ is naturally identified with $A$; or $Z$ is visibly the
universal abelian covering space of $X^*$. It follows that $H^1(X;\R)$
is naturally identified with $A^W$, the fixed-point set of $W$ acting
on $A$.

Let $\phi : G \longrightarrow X$ be defined by $\phi(t) = \Gamma g t^{-1} g'$ for
some $g,g' \in G$. A few applications of Lemma
\ref{lem:dependence-on-phi} imply that one can suppose, without
changing the rotation map, that $\phi(t) = \Gamma a \kappa \alpha^{-1}
\kappa^{-1} b$ where $a,b \in S$, $\kappa \in K$ and $t=\beta \alpha$
is the decomposition into $\beta \in K$ and $\alpha \in S$. Let
$\abeliancover{F} : Z \longrightarrow A=N \backslash G/K$ be the map that induces
the isomorphism of $\hone{X^*} \otimes \R$ with $A$. Concretely, if
$Nt \in Z$, let $t=\beta_t \alpha_t$ be the decomposition of $t$ into
$\beta_t \in K$, $\alpha_t \in S$; then $\abeliancover{F}(Nt)=K
\alpha_t N$. One computes that
\begin{align}
  \label{al:zeta-amgp}
  \zeta_s(t) &= -K (\kappa \beta_t^{-1}) \cdot \alpha_s \cdot (\kappa \beta_t^{-1})^{-1} N
  &&
  s,t \in G.
\end{align}
It is clear that $\zeta_s$ is $S$-invariant since $t \mapsto \beta_t$
is the projection $G \longrightarrow K$. Since the restriction of any mean on a
compact Lie group to its continuous functions is the Haar probability
measure \cite{MR961261}, one sees that for any $\nu \in
\means{G}{G_-}$, $\rho_s(\nu)=-\bar{\alpha}_s N$ is the projection of
$\alpha_s N$ onto the subspace of $K$-invariant vectors.

Note that if one restricts $\phi$ to $S$, then the rotation vector of
$s \in S$ with respect to the mean $\nu \in \means{S}{S_-}$ is the
projection of $-\kappa s \kappa^{-1} N$ onto the subspace of
$W$-invariant vectors.

\subsubsection*{When $K$ is not a virtual subgroup of $\Aut{S}$}
Let us now examine the case where the kernel of representation $K \longrightarrow
\Aut{S}$ is not finite. Let $K_1 \lhd K$ be the identity component of
this kernel. Since $K$ is compact and simply-connected, $K$ is
semi-simple and so $K=K_0 \oplus K_1$ is a sum of semi-simple factors,
and the representation of $K_0 \longrightarrow \Aut{S}$ has finite kernel. By
construction, $K_1$ is a normal subgroup of $G$ and the lattice
$\Gamma$ intersects $K_1$ in a compact set, hence $\Gamma \cap K_1$ is
a finite, normal subgroup of $\Gamma$. We obtain the fibration
\begin{align}
  \label{al:not-virtual-auts-fibration}
  \xymatrix{
    \Gamma \cap K_1 \backslash K_1 \ar[r] &
    \Gamma \backslash G \ar@{->>}[r]|{\,\rho\,}  &
    \bar{\Gamma} \backslash \bar{G} & = (\Gamma \cap K_1 \backslash \Gamma) \backslash (K_1 \backslash G)\,.
    }
\end{align}
The quotient $\bar{G}=SK_0$ has the property that $K_0$ is a virtual
subgroup of $\Aut{S}$. The fibre $\Gamma \cap K_1 \backslash K_1$ has
a finite fundamental group. It follows that the map $\rho^* :
H^1(\bar{\Gamma} \backslash \bar{G}; \R) \longrightarrow H^1(\Gamma \backslash G;
\R)$ is an isomorphism. From this, one concludes that the preceding
computations of the $\zeta$-map \eqref{al:zeta-amgp} and the rotation
vectors of a mean remain correct in this enlarged setting.

\subsubsection{Quotients of amenable Lie groups of \typee{} -- II}
\label{sec:translations-on-amena-folds-ii}

Let us continue with the notations of the previous example. Let $H=G
\times G'$ be a product of simply-connected amenable Lie groups (in
applications, $G'=\R$, but what follows is perfectly general). Let
$\varphi : G' \longrightarrow X$ be a uniformly continuous map and let
\begin{align}
  \label{al:amengp-map-x2}
  \phi &: H \longrightarrow X &
  \phi(h) &= \Gamma g^{-1} \varphi(g'),
  \textrm{ where } h=(g,g') \in H.
\end{align}
Similar to that above, one computes that with $s=(1,b)$ and $t=(g,a)$,
one has
\begin{align}
  \label{al:amengp-zeta-s-x2}
  \zeta_s(t) &= -K \delta_b(a) N &&
  \delta_b(a)=\textrm{the projection of }\varphi(ba)^{-1} \cdot \varphi(a)\textrm{ onto }S,
\end{align}
using the factorisation of an element in $G$ as in the previous
example. In particular, this implies that $\zeta_s$ is independent of
$g$ when $s=(1,b)$. This implies that if $\nu \in \means{H}{H_-}$ is a
mean on $H$, then the rotation vector $\rho_s(\nu)$ ($s=(1,b)$) equals the
rotation vector $\rho_b(\bar{\nu})$ for the map $\varphi$ and the
projected mean $\bar{\nu} \in \means{G'}{G'_-}$.

In the next section we show how this result can be interpreted in
terms of the rotation vector of two measures with differently-sized
supports.

\subsection{Relation to Schwartzman cycles}
\label{sec:relation-to-schwartzman}

Let us suppose that $\Phi : G \times X \longrightarrow G$ is a left-action of $G$
on $X$. For each $x \in X$, one has the orbit map $\phi_x(t) :=
\Phi(t,x)$. The action will also be denoted by $\Phi(t,x) = t \cdot
x$.

\begin{lemma}
  \label{lem:uniform-continuity-of-orbit-map}
  The orbit map $\phi_x : G \longrightarrow X$ is uniformly continuous for all $x
  \in X$.
\end{lemma}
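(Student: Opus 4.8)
The plan is to deduce the uniform continuity of $\phi_x$ from the right-invariance of the metric $d$ on $G$ together with the compactness of $X$. The essential reduction is this: since $d$ is right-invariant, controlling $d(\phi_x(s),\phi_x(t))$ for $s$ close to $t$ amounts to controlling the displacement $d(u\cdot y, y)$ for the single group element $u=st^{-1}$ (which is then close to the identity) and an \emph{arbitrary} point $y=t\cdot x$ of the orbit. Thus the lemma will follow once one establishes a form of equicontinuity of the action at the identity that is uniform over all of $X$; note that neither amenability nor local simple-connectedness is needed.

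To make this precise I would introduce the continuous displacement function $D : G\times X \longrightarrow \R$, $D(u,y)=d(u\cdot y, y)$, which vanishes identically on the slice $\{1\}\times X$ because $1\cdot y=y$. Fix $\epsilon>0$. For each $y\in X$, continuity of $D$ at $(1,y)$ supplies open neighbourhoods $W_y\ni 1$ in $G$ and $V_y\ni y$ in $X$ on which $D<\epsilon$. Since $X$ is compact, finitely many $V_{y_1},\dots,V_{y_k}$ cover $X$; set $W=\bigcap_{i=1}^{k} W_{y_i}$, an open neighbourhood of $1$. Then $D(u,y')<\epsilon$ for every $u\in W$ and every $y'\in X$, because any such $y'$ lies in some $V_{y_i}$ while $u\in W\subseteq W_{y_i}$. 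Choosing $\delta>0$ so that the ball $\set{u : d(u,1)<\delta}$ is contained in $W$ yields the uniform bound: $d(u,1)<\delta$ forces $d(u\cdot y', y')<\epsilon$ for all $y'\in X$.

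It remains to assemble the two ingredients. Given $\epsilon>0$, take $\delta$ as above. If $s,t\in G$ satisfy $d(s,t)<\delta$, put $u=st^{-1}$; right-invariance gives $d(u,1)=d(ut,t)=d(s,t)<\delta$, and the action identity $s\cdot x = u\cdot(t\cdot x)$ holds. Hence
\[
d(\phi_x(s),\phi_x(t)) = d\bigl(u\cdot(t\cdot x),\, t\cdot x\bigr) = D(u,\, t\cdot x) < \epsilon,
\]
since $t\cdot x\in X$. As $\delta$ depends on neither $s,t$ nor $x$, the map $\phi_x$ is uniformly continuous (indeed uniformly so over $x\in X$).

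The only genuine subtlety, and the place where compactness is indispensable, is passing from the pointwise continuity of the action near $\{1\}\times X$ to a single neighbourhood $W$ of the identity that works simultaneously for every orbit point $y\in X$. Because $G$ itself need not be compact one cannot argue symmetrically in the $G$-variable; it is precisely the right-invariance of $d$ that eliminates the unbounded $t$-dependence and reduces everything to the compact slice $\{1\}\times X$, where the finite-cover argument applies.
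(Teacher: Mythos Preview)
Your proof is correct and follows essentially the same route as the paper's: both arguments reduce the problem, via right-invariance of $d$, to bounding the displacement $d(u\cdot y,y)$ uniformly over $y\in X$ for $u$ near the identity, and both extract this uniform bound from compactness of $X$. The only cosmetic difference is that the paper packages the bound into the modulus $\epsilon(\delta)=\max\{d(\Phi(1,x),\Phi(t,x)):x\in X,\ d(1,t)\leq\delta\}$, invoking local compactness of $G$ to ensure the maximum is attained, whereas you run a finite-cover argument on $X$ that avoids local compactness of $G$ altogether.
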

\begin{proof}
  Let us define $\epsilon(\delta) = \max \set{ d(\Phi(1,x),\Phi(t,x))
    \st x \in X, d(1,t) \leq \delta}$. By local compactness of $G$ and
  compactness of $X$, the maximum is attained. Moreover, $\epsilon$ is
  a continuous increasing function of $\delta$ that vanishes at
  $\delta=0$. This implies uniform continuity of the orbit map
  $\phi_x$.
\end{proof}

Let $\nu \in \means{G}{G_-}$ be a left-invariant mean on $G$. For each
$x \in X$, the pull-back of $C^0(X)$ by the orbit map $\phi_x$ lies
inside $\linfty{G}$. Thus, $\phi_{x,*} \nu$ determines a positive,
continuous linear functional on $C^0(X)$ and so by the Riesz
representation theorem, $\phi_{x,*} \nu$ induces a Borel probability
measure $\mu_x$ on $X$. It is clear that $\mu_x$ is $G$-invariant. The
support of $\mu_x$ is clearly contained in the $\omega$-limit set of
$x$,
\begin{align}
  \label{al:omega-limit-set}
  \omega_G(x) &= \bigcap_{T>0} \overline{\set{ t\cdot x \st d(1,t)>T}}.
\end{align}

In \cite[Appendix A]{FGS}, one finds a definition of the rotation
vector of an invariant measure of a flow (an $\R$-action). Let $\mu$
be an invariant Borel probability measure of the flow $\varphi : \R
\times X \longrightarrow X$ and $[f] \in \hone{X}$ a cohomology class. The
rotation vector of $\mu$ is defined as
\begin{align}
  \label{al:fgs-rotation-vector}
  \kp{[f]}{\rho_{\varphi}(\mu)} = \int_{x \in X} \zeta_{\varphi}(x)\, d\mu(x),
\end{align}
where $\zeta_{\varphi}(x)=f(\varphi_1(x)) - f(x)$ similar to
\eqref{al:difference-op}. We have:

\begin{theorem}
  \label{thm:fgs-equality}
  Let $\Phi : G \times X \longrightarrow X$ be a $G$-action, $\varphi$ be an
  action of a $1$-dimensional subgroup with $\varphi_1=s$, and let
  $\nu \in \means{G}{G_-}$, $\mu_x = \phi_{x,*} \nu$ for some $x \in
  X$. Then
  \begin{align}
    \label{al:rho}
    \rho^x_s(\nu) = \rho_{\varphi}(\mu_x),
  \end{align}
  where $\rho^x$ is the rotation map for the orbit map $\phi_x$.
\end{theorem}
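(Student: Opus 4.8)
The plan is to pair both sides of \eqref{al:rho} with an arbitrary class $[f]\in\hone{X}$ and to show that the two functions on $G$ being averaged against $\nu$ coincide, so that the asserted equality reduces to the defining property of the pushforward $\mu_x=\phi_{x,*}\nu$.

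First I would unwind the two definitions. By \eqref{al:fgs-rotation-vector}, the right-hand side satisfies $\kp{[f]}{\rho_\varphi(\mu_x)}=\int_X \zeta_\varphi\,d\mu_x$, where $\zeta_\varphi(y)=f(\varphi_1(y))-f(y)$ is the real-valued cocycle obtained by lifting $f$ along the flow segment $\tau\mapsto\varphi_\tau(y)$, $\tau\in[0,1]$. Since $f$ is continuous and $\varphi$ is a continuous flow, $\zeta_\varphi\in C^0(X)$, so the Riesz description of $\mu_x=\phi_{x,*}\nu$ gives $\int_X \zeta_\varphi\,d\mu_x=\kp{\nu}{\zeta_\varphi\circ\phi_x}$. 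On the other hand, by Lemma \ref{lem:well-definedness-of-rotation-vector} and \eqref{al:difference-op}, the left-hand side satisfies $\kp{[f]}{\rho^x_s(\nu)}=\kp{\nu}{\zeta_s}$, where $\zeta_s(t)=\abeliancover{g}(st)-\abeliancover{g}(t)$ is built from the lift of $g=f\circ\phi_x:G\to S^1$.

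The key step is to identify $\zeta_\varphi\circ\phi_x$ with $\zeta_s$ in $\linfty{G}$. Writing the one-parameter subgroup as $\tau\mapsto\sigma(\tau)$ with $\sigma(1)=s$, so that $\varphi_\tau(y)=\sigma(\tau)\cdot y$, the action axiom yields $\varphi_\tau(\phi_x(t))=\sigma(\tau)\cdot(t\cdot x)=(\sigma(\tau)t)\cdot x=\phi_x(\sigma(\tau)t)$. Hence the flow segment $\tau\mapsto\varphi_\tau(t\cdot x)$ used to compute $\zeta_\varphi(\phi_x(t))$ is the $\phi_x$-image of the path $\tau\mapsto\sigma(\tau)t$ in $G$, which runs from $t$ to $st$. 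By uniqueness of path-lifting, lifting $g=f\circ\phi_x$ along this path produces exactly the endpoint difference $\abeliancover{g}(st)-\abeliancover{g}(t)$, so $\zeta_\varphi(\phi_x(t))=\zeta_s(t)$ for every $t\in G$. Combining the three displays gives $\kp{[f]}{\rho_\varphi(\mu_x)}=\kp{\nu}{\zeta_s}=\kp{[f]}{\rho^x_s(\nu)}$; since both sides are linear forms on $H^1(X;\R)$ determined by their values on the integral classes $\hone{X}$ (cf.\ Lemma \ref{lem:well-definedness-of-rotation-vector}), the identity \eqref{al:rho} follows.

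The main obstacle is the lift bookkeeping in the third paragraph: both $\zeta_\varphi$ and $\zeta_s$ are defined only after passing to $\R$-valued lifts, and one must verify that these two constructions --- one through the flow direction on $X$, the other through right multiplication on the simply-connected group $G$ --- agree on the nose rather than merely modulo $\Z$. This is precisely what the simple-connectedness of $G$ together with the path-lifting property supplies, but it is the point that demands care; the remaining ingredient, namely $\zeta_\varphi\in C^0(X)$ so that the pushforward identity $\int_X\zeta_\varphi\,d\mu_x=\kp{\nu}{\zeta_\varphi\circ\phi_x}$ holds, is routine.
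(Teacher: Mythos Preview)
Your argument is correct and is precisely the ``application of change of variables'' that the paper alludes to; the paper gives no further detail. You have supplied exactly what the phrase means here --- the pushforward identity $\int_X\zeta_\varphi\,d\mu_x=\kp{\nu}{\zeta_\varphi\circ\phi_x}$ together with the pointwise identification $\zeta_\varphi\circ\phi_x=\zeta_s$ --- and your attention to the lift bookkeeping (matching the path-lift defining $\zeta_\varphi$ with the global lift $\abeliancover{g}$ available on the simply-connected $G$) is the only subtlety, which you handle correctly.
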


The proof is an application of change of variables.

\subsection{Averaged rotation vectors}
\label{sec:averaged-rotation-vectors}

In this subsection, let us suppose that $G$ fits in the exact sequence
of (amenable) groups
\begin{align}
  \label{al:g-exact-seq}
  \xymatrix{
    H \ar@^{{(}->}[r] &
    G \ar@{->>}[r] &
    F.
  }
\end{align}
Let $\nu_H \in \means{H}{H_-}$ (resp. $\nu_F \in \means{F}{F_-}$) be
left-invariant means. One can define an invariant mean $\nu_G$ as
follows: let $f \in \linfty{G}$ and define $f_H \in \linfty{F}$ by
averaging over $H$, $f_H(Ht) = \kp{\nu_H}{f_t}$ where $f_t(x) =
f(tx)$. The normality of $H$ and left-invariance of $\nu_H$ implies
that $f_H$ is well-defined and $f_H \in \linfty{F}$. Then, one defines
the left-invariant mean $\nu_G$ by $\kp{\nu_G}{f} := \kp{\nu_F}{f_H}$.

\begin{definition}
  \label{def:product-mean}
  The mean $\nu_G \in \means{G}{G_-}$ is denoted by $\nu_G = \nu_F
  \times \nu_H$ and called a \textem{product mean}.
\end{definition}

Let us suppose that $H$ acts on $X$ by an action $\varphi$ and that
there is a uniformly continuous map $\phi : G \longrightarrow X$ satisfying
\begin{align}
  \label{al:twisted-action}
  \phi(s\cdot t) &= \varphi(s) \cdot \phi(t) && \forall s\in H, t\in G.
\end{align}

Let $t_0 \in G$, $x = \phi(t_0)$ and $\mu_{H,x} = \varphi_{x,*} \nu_H$
is the pushed forward measure on $X$. The measure $\mu_G = \phi_*
\nu_G$ (where $\nu_G = \nu_F \times \nu_H$) is $H$-invariant due to
the cocycle condition \eqref{al:twisted-action} and $\supp \mu_{H,x}
\subset \supp \mu_G$.

The following lemma shows that under a suitable condition on the map
$\phi$, one can average over the group $G$ to obtain a measure $\mu_G$
with a larger support and the same rotation set.

\begin{lemma}
  \label{lem:iso-rotational}
  Suppose that the lift $\abeliancover{\phi}$ (see \eqref{al:hone})
  has the property that for each $t \in G$, there is a $K>0$ such that
  $d( \abeliancover{\varphi}(s) \cdot \abeliancover{\phi}(t_0),
  \abeliancover{\varphi}(s) \cdot \abeliancover{\phi}(t) ) < K$ for
  all $s \in H$. Then for all $s \in H$, $\rho_s(\mu_{H,x})$ is
  independent of the point $x \in \im \phi$. In particular,
  \begin{align}
    \label{al:iso-rotational-equality}
    \rho_s(\mu_{H,x}) = \rho_s(\mu_G).
  \end{align}
\end{lemma}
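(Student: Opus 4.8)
The plan is to prove the two assertions separately: first the base-point independence, that $\rho_s(\mu_{H,x})$ does not depend on $x\in\im\phi$, and then, by unwinding the product mean $\nu_G=\nu_F\times\nu_H$, the identification of this common value with $\rho_s(\mu_G)$. Throughout, $c_t$ denotes conjugation $h\mapsto tht^{-1}$ by an element $t\in G$, which is an automorphism of $H$ since $H\lhd G$.

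For the first assertion, fix $s\in H$ and compare the orbit maps $\phi_x,\phi_{x_0}:H\longrightarrow X$, $\phi_x(u)=\varphi(u)\cdot x$, for $x=\phi(t)$ and $x_0=\phi(t_0)$. By the cocycle relation \eqref{al:twisted-action} these lift to $u\mapsto\abeliancover{\varphi}(u)\cdot\abeliancover{\phi}(t)$ and $u\mapsto\abeliancover{\varphi}(u)\cdot\abeliancover{\phi}(t_0)$, so the standing hypothesis says precisely that $d(\abeliancover{\phi}_{x_0}(u),\abeliancover{\phi}_x(u))<K$ for all $u\in H$. Applying Lemma \ref{lem:dependence-on-phi} to the amenable group $H$ and these two maps gives $\kp{\nu}{\zeta^{(x)}_u-\zeta^{(x_0)}_u}=0$ for every $u\in H$ and every $\nu\in\means{H}{H_-}$; taking $u=s$ and invoking Theorem \ref{thm:fgs-equality} yields $\rho_s(\mu_{H,x})=\rho_s(\mu_{H,x_0})$. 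I stress that this equality holds for every left-invariant mean, which I will use in the second part.

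For the second assertion I would unwind the product mean. Since $f(\phi(su))-f(\phi(u))=f(\varphi(s)\cdot\phi(u))-f(\phi(u))$ by \eqref{al:twisted-action}, one may write $\zeta_s=\zeta^{[f]}_\varphi\circ\phi$; Definition \ref{def:product-mean} then gives $\kp{\nu_G}{\zeta_s}=\kp{\nu_F}{(\zeta_s)_H}$ with $(\zeta_s)_H(Ht)=\kp{\nu_H}{x\mapsto\zeta_s(tx)}$, an average over the coset $tH$. Using $H\lhd G$ to rewrite $tx=(txt^{-1})t$ and applying \eqref{al:twisted-action} once more, I would recognise the inner average as the push-forward $\varphi_{\phi(t),*}(c_t)_*\nu_H$, so that $(\zeta_s)_H(Ht)=\kp{[f]}{\rho_s\big(\varphi_{\phi(t),*}(c_t)_*\nu_H\big)}$. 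In other words $\mu_G$ is the $\nu_F$-average over $F$ of the slice measures $\varphi_{\phi(t),*}(c_t)_*\nu_H$; since $\nu_F(1)=1$, it suffices to prove that every such slice has rotation vector equal to the common value $\rho_s(\mu_{H,x_0})$ produced by the first part.

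I expect the conjugation $c_t$ to be the one genuinely delicate point, since it replaces the mean $\nu_H$ by $(c_t)_*\nu_H$. The resolution I have in mind runs as follows. First, $c_t\in\Aut{H}$, and the push-forward of a left-invariant mean under an automorphism is again left-invariant, so $(c_t)_*\nu_H\in\means{H}{H_-}$; by Lemma \ref{lem:linfty} the functions $\zeta_s^{(\cdot)}$ are bounded, so all the pairings are finite and the slice measures are genuine $H$-invariant probabilities. Second, the base-point independence of the first part is valid for this mean, so I may move the base point from $\phi(t)$ to $x_0$, reducing the claim to the single statement that $\rho_s$ at the fixed base point $x_0$ is unchanged on passing from $\nu_H$ to $(c_t)_*\nu_H$. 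Third, this last insensitivity is where the real content lies: the map $u\mapsto\rho_u(\nu)$ is a homomorphism from the group into the abelian group $H_1(X;\R)$ — this follows from the cocycle identity $\zeta_{ab}=\zeta_a\circ L_b+\zeta_b$ together with left-invariance of $\nu$ — and I would exploit that such a homomorphism is insensitive to the inner automorphisms of $G$ that preserve $H$, which is exactly what must absorb $c_t$. Granting this, each slice contributes the constant $\rho_s(\mu_{H,x_0})$, and averaging against $\nu_F$ gives $\rho_s(\mu_G)=\rho_s(\mu_{H,x})$, as claimed. In the cases of interest the dynamics on the relevant $\omega$-limit set is uniquely ergodic, which makes this insensitivity immediate, so the crux of a fully rigorous write-up is precisely to extract the third step from the weaker hypothesis on the lift $\abeliancover{\phi}$ alone.
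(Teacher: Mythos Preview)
Your first assertion---independence of $\rho_s(\mu_{H,x})$ from the choice of $x\in\im\phi$---is exactly the paper's argument: it is a direct application of Lemma~\ref{lem:dependence-on-phi} (on the group $H$) together with Theorem~\ref{thm:fgs-equality}. The paper's own proof is the single sentence ``follows from Lemma~\ref{lem:dependence-on-phi} and Theorem~\ref{thm:fgs-equality} along with an unraveling of the product mean,'' and your write-up follows this outline in considerably more detail.

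Your unraveling of the product mean is also correct, and you have put your finger on a genuine subtlety that the paper's one-line proof does not address: when one evaluates $(\zeta_s)_H(Ht)=\kp{\nu_H}{u\mapsto\zeta_s(tu)}$ and uses normality of $H$ to write $tu=(tut^{-1})t$, the inner average becomes $\kp{(c_t)_*\nu_H}{\zeta_s^{(\phi(t))}}$ rather than $\kp{\nu_H}{\zeta_s^{(\phi(t))}}$. Your step~3 is the honest gap: the homomorphism property $\rho_{ab}(\nu)=\rho_a(\nu)+\rho_b(\nu)$ you derive is correct, but it kills \emph{inner} automorphisms of $H$, not conjugation by elements of $G\setminus H$; and the claim you actually need, $\rho_s((c_t)_*\nu_H)=\rho_s(\nu_H)$, is about varying the mean, not the group element. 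These are not the same statement, and the hypothesis on $\abeliancover{\phi}$ does not obviously supply it either, since that hypothesis compares orbits under the \emph{same} parametrisation by $H$, whereas the twist replaces $u$ by $c_t(u)$.

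In the paper's actual applications (section~\ref{sec:translations-on-amena-folds-ii} and Lemma~\ref{lem:min}) the ambient group is a direct product $G\times\R$, so $c_t$ is the identity and your step~3 is vacuous. So your approach is the paper's approach, carried out with more care; the residual gap you flag is real at the stated level of generality but evaporates in the cases the paper uses.
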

To be clear, $\rho_s$ refers to the rotation map of the flow generated
by the $1$-parameter group through $s$, as in
\eqref{al:fgs-rotation-vector}. The proof of this lemma follows from
Lemma \ref{lem:dependence-on-phi} and Theorem \ref{thm:fgs-equality}
along with an unraveling of the product mean.

Note that the example in section \ref{sec:translations-on-amena-folds}
does not contradict this lemma. In that example, the map $\phi$ does
not satisfy the uniform boundedness condition.

\section{Homogeneous structures}
\label{sec:homogeneous-structures}

This section proves Theorems \ref{thm:2} and
\ref{thm:2-implications}. We begin by establishing some terminology
and notation.

Let $G$ be a connected Lie group. Define the left (resp. right)
translation map by
\begin{align}
  \label{eq:lr-actions}
  L_h(g) &:= hg, && R_h(g) := gh
\end{align}
for all $g,h\in G$. These two maps define a left action of $G_-=G$
(resp. $G_+=G^{op}$) on $G$ and therefore on $T^*G$ by Hamiltonian
symplectomorphisms. The momentum maps of these actions are
\begin{align}
  \label{al:1}
  \mmap_- &: T^*G \longrightarrow \g_-^*  &
  \mmap_+ &: T^*G \longrightarrow \g_+^*  \\
  \mmap_-(g,\mu_g) &:=  (T_1 R_g)^* \mu_g &
  \mmap_+(g,\mu_g) &:=  (T_1 L_g)^* \mu_g, \notag
\end{align}
for each $g\in G$, $\mu_g \in T^*_g G$.

A co-vector field $\mu : G \longrightarrow T^*G$ is left- (resp. right-) invariant
if $\mu(1) = (T_1 L_g)^* \mu(g)$ (resp. $\mu(1) = (T_1 R_g)^* \mu(g)$)
for all $g \in G$. If one trivialises $T^*G$ with respect to the
left-invariant co-vectors, then the momentum maps are simply
\begin{align}
  \label{al:2}
  \mmap_-(g,\mu) &:= \CAd{g^{-1}} \mu &&
  \mmap_+(g,\mu)  := \mu,
\end{align}
for all $g \in G, \mu \in \g^* = T_1^* G$, where $\CAd{g} = (T_1
L_gR_{g^{-1}})^*$.

One says that a function $H : T^*G \longrightarrow R$ is \textem{collective} for
the left-action (resp. right-action) if $H=\mmap_-^*h$
(resp. $H=\mmap_+^*h$) for some $h : \g^* \longrightarrow \R$. If $H$ is
collective for the left-action (resp. right-action) then \eqref{al:1}
shows it is right-invariant (resp. left-invariant). In particular, a
Hamiltonian that is collective for the left-action [right-invariant]
(resp. right-action [left-invariant]) Poisson-commutes with $\mmap_+$
(resp. $\mmap_-$).

Let $H : T^*G \longrightarrow \R$ be a smooth, left-invariant (= right collective)
Tonelli Hamiltonian. Therefore, there is a smooth convex Hamiltonian
$h : \g^* \longrightarrow \R$ such that $H=\mmap_+^*h$. Moreover, since $H$ is
left-invariant, it Poisson-commutes with the momentum map of the left
action $\mmap_-$. 

Let $\Gamma \lhd G$ be a co-compact lattice subgroup and $M=\Gamma
\backslash G$. It is assumed that $G$ is simply-connected, so that the
universal cover of $M$, $\universalcover{M}$, is $G$. Let
$\commutatorsg{\Gamma}=\commutator{\Gamma}$ be the commutator subgroup
of $\Gamma$, which is the fundamental group of the universal abelian
cover $\abeliancover{M}$. This leads to the commuting diagram of
covering maps:
\begin{equation}
  \label{eq:coverings}
  \xymatrix{
    T^* G=T^*\universalcover{M} \ar@{->>}[r]\ar@{->>}[d]^{\abeliancover{\Pi}} \ar@{->>}@/_17mm/[dd]_\Pi &
    G=\universalcover{M} \ar@{->>}[d]^{\universalcover{\pi}} \ar@{->>}@/^17mm/[dd]^\pi\\
    T^* (\commutator{\Gamma} \backslash G)=T^* \abeliancover{M} \ar@{->>}[r] \ar@{->>}[d]^{\abeliancover{\Pi}}&
    \commutator{\Gamma} \backslash G=\abeliancover{M} \ar@{->>}[d]^{\abeliancover{\pi}}\\
    T^* ({\Gamma} \backslash G)=T^* {M} \ar@{->>}[r] &
    {\Gamma} \backslash G={M}\,.
    }
\end{equation}
We adopt the notational convention that the pull-back of $x$ to
$\abeliancover{M}$ (resp. $\universalcover{M}$) is denoted by
$\abeliancover{x}$ (resp. $\universalcover{x}$).

Let $c \in H^1(M;\R)$ be a cohomology class, let $(x,p) \in
\cM_c^*(H)$ be a recurrent point in the Mather set and let
$\delta : \R \longrightarrow M$ be the minimizer with initial conditions
$\delta(0)=x$ and $\cL (x,\dot{\delta}(0))=(x,p)$, where $\cL$ denotes the associated Legendre transform (see section \ref{sec:am-theory}). By the arguments of
\cite{Mather91}, we can suppose that the rotation set of $\delta$ is a
singleton $\set{h} \subset H_1(M;\R)$ and any weak-* limit of uniform
measures along the orbit is a minimizing measure. Fix a lift
$\universalcover{\delta}$ of $\delta$ to $\universalcover{M}$. For
each $g \in G$, let $\universalcover{\delta}_g = L_{g^{-1}} \circ
\universalcover{\delta}$ be a left-translate of this lift. Left
invariance of $H$ implies that $\universalcover{\delta}_g$ is the
projection of an integral curve, which implies that the projection of
$\universalcover{\delta}_g$ to $\abeliancover{M}$ and $M$ are also
projections of orbits. All of this allows the definition of a map
\begin{align}
  \label{al:phi-map-mather-set}
  \xymatrix{
    G \times \R \ar[r]^{\phi} \ar[rd]_{\bar{\phi}} &
    T^*M        \ar[d] &&
    \phi(g,t) = \Pi \circ (TL_{g^{-1}})^* \universalcover{\varphi}_t(x,p), \\
    &
    M &&
    \bar{\phi}(g,t) = \Gamma g^{-1} \tilde{\delta}(t) = \Gamma \delta_g(t)
    }
\end{align}
where $\universalcover{\varphi}$ is the flow of $H$ on the universal
cover $T^*G$. By the example in section
\ref{sec:translations-on-amena-folds-ii}, the rotation vector of the
map $\delta_g$ is independent of $g$ for any mean on $G \times
\R$. This implies the same is true for $\phi(g,t)$.

Let $\nu_{\R} \in \means{\R}{\R}$ be an invariant mean such that the
rotation vector of $\nu_{\R}$ at $s=1$ under the map $\delta$ is
$h$. By hypothesis, there is such a mean. The preceding discussion proves the following Lemma.

\begin{lemma}
  \label{lem:min}
  Let $\nu_\R \in \means{\R}{\R}$, $\nu_G \in \means{G}{G_-}$ and
  $\mu=\phi_* (\nu_\R \times \nu_G)$. Then $\mu$ minimizes $A_c$ - \ie it is $c$-action minimizing - and
  the projection of $\supp \mu$ covers $M$.
\end{lemma}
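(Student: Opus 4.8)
The plan is to verify the two assertions separately, exploiting that $\mu=\phi_*(\nu_\R\times\nu_G)$ is assembled from the left-translated minimizer orbits $\delta_g$ and that, by Definition \ref{def:product-mean}, $\nu_\R\times\nu_G$ is an honest two-sided invariant mean on $G\times\R$. Throughout I fix the exact sequence $G\hookrightarrow G\times\R\twoheadrightarrow\R$, so that the normal factor $G$ is the inner one averaged first; this is exactly the ordering encoded by the notation $\nu_\R\times\nu_G$.

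For the minimizing property I would first record that $\mu$ is invariant under the Hamiltonian flow: the mean $\nu_\R\times\nu_G$ is left-invariant on all of $G\times\R$, and the orbit map satisfies the cocycle relation \eqref{al:twisted-action} in the $\R$-factor, so translation in the $\R$-direction pushes $\mu$ to itself. Next I claim $\supp\mu\subseteq\cN^*_c(H)$. Each point $\Pi(TL_{g^{-1}})^*\tilde\varphi_t(x,p)$ lies on the projection to $M$ of the left-translate $\tilde\delta_g=L_{g^{-1}}\tilde\delta$; since $H$ and hence the lifted Lagrangian are left-invariant, and on the simply-connected cover $G$ any representative $\tilde\eta_c$ is exact, left translation preserves the action $A_{\tilde L-\tilde\eta_c}$ up to endpoint terms. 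Thus $\tilde\delta_g$ is again a global minimizer, $\delta_g$ is a $c$-semistatic curve, and $\im\phi$ (so $\supp\mu$) lies in the \mane{} set $\cN^*_c(H)$. By the Poincar\'e recurrence theorem $\mu$-almost every point is recurrent, and recurrent points of the \mane{} set lie in the Aubry set, so $\mu(\cA^*_c(H))=1$. Integrating the critical-subsolution inequality $L-\eta_c+\alpha_H(c)\ge du$ against $\mu$, using $\int du\,d\mu=0$ for the invariant measure together with equality on $\cA^*_c(H)$, yields $A_{L-\eta_c}(\mu)=-\alpha_H(c)$, which by Carneiro's identity \cite{Carneiro} is precisely the statement that $\mu$ is $c$-action minimizing \cite{Fathibook,SorNotes}.

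For the support statement I would compute $\bar\mu=\pi_*\mu=\bar\phi_*(\nu_\R\times\nu_G)$ on $M$ and identify it with normalized Haar measure $m_M$, whose support is all of $M$. The key point is that $\bar\phi(g,t)=\Gamma g^{-1}\tilde\delta(t)$ is invariant under right translation of $g$ by $\Gamma$ (since $\Gamma\gamma^{-1}=\Gamma$), so for fixed $t$ the map $g\mapsto\psi(\bar\phi(g,t))$ descends to $G/\Gamma$. Because $\nu_G$ is the inner factor and is left-invariant, it induces a left-$G$-invariant positive unital functional on $C(G/\Gamma)$; as $G$ acts transitively on the homogeneous space $G/\Gamma$, the invariant probability measure is unique and equals Haar $\bar m$, which restores countable additivity. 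Hence the inner average is $\int_{G/\Gamma}\psi\circ\Theta_t\,d\bar m$ with $\Theta_t(g\Gamma)=\Gamma g^{-1}\tilde\delta(t)$. Factoring $\Theta_t=R_{\tilde\delta(t)}\circ\iota$ as the inversion homeomorphism $\iota:G/\Gamma\to\Gamma\backslash G$ followed by a right translation, and using that $G$ is unimodular (it carries a lattice), one sees $\Theta_t$ pushes $\bar m$ to $m_M$ for every $t$. The inner average is therefore the constant $\int_M\psi\,dm_M$, and the outer $\nu_\R$-average of a constant leaves it fixed, giving $\bar\mu=m_M$ and $\pi(\supp\mu)=M$.

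I expect the support statement, not the minimizing property, to be the main obstacle: one must pass from the merely finitely-additive mean $\nu_G$ to a genuine, fully-supported Borel measure on $M$. The decisive device is the reduction of the left-invariant mean on right-$\Gamma$-invariant functions to integration over the compact homogeneous space $G/\Gamma$, where uniqueness of the $G$-invariant probability measure is what rescues both countable additivity and full support; unimodularity together with the inversion trick then identify the pushforward with Haar measure on $M$. The ordering of the product mean (averaging the normal factor $G$ first) is what makes each $t$-slice already integrate to the Haar average, so that no Fubini-type interchange between the mean $\nu_\R$ and honest integration is required.
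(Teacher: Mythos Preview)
Your support argument is correct and considerably more explicit than the paper's. Identifying the inner $\nu_G$-average with Haar measure on $M$ via the unique left-$G$-invariant probability on $G/\Gamma$, composed with inversion and right translation (unimodularity following from the existence of a lattice), is exactly the right mechanism, and your observation that this renders the outer $\nu_\R$-average trivial is clean.

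The minimizing argument, however, has a genuine gap at the step ``$\tilde\delta_g$ is again a global minimizer, $\delta_g$ is a $c$-semistatic curve''. Left translation by $g^{-1}$ does preserve $A_{\tilde L}$, and with $\tilde\eta_c$ exact it preserves $A_{\tilde L-\tilde\eta_c}$ up to endpoint terms; this shows $\tilde\delta_g$ minimizes among curves in $G$ \emph{with the same endpoints}. But the $c$-semistatic condition for $\delta_g$ on $M$ requires minimizing among competitors whose lifts have endpoints in the $\Gamma$-orbits $\Gamma g^{-1}\tilde\delta(a)$ and $\Gamma g^{-1}\tilde\delta(b)$. Left-translating such a competitor by $g$ produces a curve with endpoints in $g\Gamma g^{-1}\tilde\delta(a)$ and $g\Gamma g^{-1}\tilde\delta(b)$, which are \emph{not} competitors for $\delta$ unless $\Gamma$ is normal in $G$. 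For the lattices considered in section~\ref{sec:translations-on-amena-folds} (and typical non-abelian examples such as the Heisenberg nilmanifold), $\Gamma$ is not normal, so this inference fails.

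The paper's route avoids this by computing the action of $\mu$ directly rather than locating $\supp\mu$ inside $\cN^*_c$. By left-invariance of the Lagrangian, $(L)(\delta_g(t),\dot\delta_g(t))$ is independent of $g$, so $\int L\,d\mu$ equals the $L$-action of the original minimizing measure $\delta_*\nu_\R$. The section~\ref{sec:translations-on-amena-folds-ii} computation shows the rotation vector of $\mu$ equals that of $\delta_*\nu_\R$, so $\int\eta_c\,d\mu=\langle c,h\rangle$ matches as well; hence $\int(L-\eta_c)\,d\mu=-\alpha_H(c)$. Equivalently, one may choose the representative $\eta_c$ to be a bi-invariant $1$-form (section~\ref{sec:translations-on-amena-folds} shows these exhaust $H^1(M;\R)$); then $\tilde L-\tilde\eta_c$ is itself left-invariant on $TG$, the integrand is constant in $g$, and the minimizing property follows in one line without any semi-static detour.
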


\begin{proof}[Proof (Theorem \ref{thm:2})]
By \cite[Theorem 2]{Mather91}, we know that $\supp \mu$ is
a Lipschitz graph over $M$. Therefore, the lift to $T^*
\universalcover{M}$ contains the smooth manifold
$\universalcover{\phi}(G \times 0)$ which is a smooth graph over
$\universalcover{M}$. Therefore $\supp \mu$ is a smooth Lagrangian
graph over $M$, $\supp \mu = \Graph{\eta}$, and lifting this
picture to $T^* \universalcover{M}$ shows that
$\universalcover{\eta}$ is closed and left-invariant. Therefore,
$\universalcover{\eta}$ is a bi-invariant $1$-form. Since $\cM^*_c(H)
= \Graph{\eta}$, where $c$ is the cohomology class of $\eta$, this
proves item (i) of Theorem \ref{thm:2}. Lemma \ref{lem:min} and the
preceding discussion implies that the rotation set of $\cM^*_c(H)$ is
a singleton, which implies item (iii).

Let us now examine Hamilton's equations for $H$ on the Mather set
$\cM^*(H) = \Graph{\eta}$.
Since $H$ is left-invariant, it follows that 
\begin{align}
  H(q,\eta(q)) &= h \circ \mmap_+(q,\eta(q)) = h((T_1 L_g)^* \eta(q)) = h(\eta(1)) = E   \label{al:3}\\
  \mmap_-(q,\eta(q)) &= \eta(1), \label{al:4}
\end{align}
for all $q\in M$. (\ref{al:4}) follows because $\eta$ is bi-invariant,
which implies that the co-adjoint orbit of $\eta(1)$ is a single
point.

    Hamilton's equations for the Hamiltonian $H$ are
    \begin{align}
      X_H(g,\mu) &: \left\{
        \begin{array}{lcl}
          \dot{g}   &=& (T_1 L_g)\cdot \d h(\mu),\\
          \dot{\mu} &=& -\cad{\d h(\mu)} \mu.
        \end{array}
      \right.
      && \forall g\in G, \mu \in \g^*.
    \end{align}
    In particular, if $\mu$ is a closed form, then $\cad{\xi}\mu$
    vanishes for all $\xi \in \g$. Therefore, the orbit of $(g,\mu)$ is
    $\{ (T_1 R_{\exp(t\xi)})^* (g,\mu) = (g \exp(t\xi),\mu)\ : \ t \in
    \R \}$ where $\xi=\d h(\mu)$, \ie it is the orbit of a
    $1$-parameter subgroup. This proves item (ii).

    Finally, the discussion around \eqref{eq:hj-inequation} and
    \eqref{al:3} shows that the following diagram commutes
    \begin{align}
      \label{eq:alpha-H-amen}
      \xymatrix{
        H^1(M;\R)  \ar[r]^(.7){\alpha_H} & \R  &&
        c \ar@{|->}[r]                   & \alpha_H(c) = h(\eta(1)) \\
        (T^*_1G)^G \ar[u]^{\cong} \ar[ur]_{h} &&&
        \eta \ar@{|->}[u] \ar@{|->}[ur]
      }
    \end{align}
    where $(T^*_1G)^G$ is the set of bi-invariant $1$-forms on $G$. By
    hypothesis, $h$ is $C^r$. This completes the proof.
\end{proof}

\begin{proof}[Proof (Theorem \ref{thm:2-implications})]
  The sole remaining thing to prove is that if $H$ is
  weakly integrable and $\Lambda \subset T^*M$ lies inside an
  iso-energy surface and intersects $\reg{F}$, then $M$ is a
  homogeneous space of a compact reductive Lie group. By Theorem
  \ref{thm:2}, $\Lambda = \Graph{\eta}$ where $\eta$ is a
  bi-invariant, closed $1$-form on $G$. By Theorem \ref{maincor},
  $M$ is diffeomorphic to $\T^b \times B$ where $B$ is a
  parallelisable manifold with finite coverings having zero first
  Betti number. Therefore, the lattice $\Gamma = \pi_1(M)$ splits as
  $\Gamma = \Z^b \oplus P$ where $P=\pi_1(B)$. From the description in 
 (\ref{al:comm-dia-amgp}), one knows that $\Beta$ and hence $N$ must be
  trivial. This implies that $\dim S=b$ (we do not claim that the
  $\Z^b$ factor is a lattice in $S$). On the other hand, one also sees
  that $P=\pi_1(B)$ must be finite: since $\Gamma$ is virtually
  polycyclic, so is $\Z^b \backslash \Gamma = P$, but a virtually
  polycyclic group is either finite or it contains a finite index
  subgroup that has non-zero first Betti number.
  \footnote{If $D$ is solvable, then the derived series
    $D_k=\commutatorsg{D_{k-1}}$, $D_0=D$, terminates at $1$ for some
    $k$. If each quotient $D_{k-1}/D_k$ is finite, then $D$ is finite;
    if $D$ is not finite, then there is a least $k$ such that
    $D_k/D_{k+1}$ is infinite. This $D_k$ is therefore of finite index
    with non-zero first Betti
    number.}
  Additionally,   since $P < G$ is a finite subgroup, it is compact and therefore a
  subgroup of a maximal compact subgroup; up to an inner automorphism,
  we can assume that $P < K$.

  Therefore $M$ is finitely covered by $\abeliancover{M}=\T^b \times
  \universalcover{B}$ and Theorems~\ref{maincor} \&~\ref{thm:2} show that
  $\T^b$ is the closure of the projection of a $1$-parameter subgroup
  of $S$. This proves that $S$ is abelian.

  Finally, let $\Gamma_1 < \Gamma$ be a torsion-free subgroup such
  that $\abeliancover{M} = \Gamma_1 \backslash G$. One knows that
  $\Gamma_1$ is generated by elements $\epsilon_i=e_i \delta_i$ for
  $i=1,\ldots,b$ where $e_i \in S$, $\delta_i \in K$. Since $\Gamma_1$
  is abelian, the $\delta_i$ pairwise commute and $e_i$ commutes with
  $\delta_j$ for all $i \neq j$. From the argument of section
  \ref{sec:translations-on-amena-folds} one knows that there are
  integers $n_i>0$ such that $\delta_i^{n_i}$ generate a torus
  subgroup $T<K$. It follows that there are torsion elements $c_i \in
  K$ and $\xi_i \in \liealg{T}$ such that $\delta_i = c_i\exp(\xi_i)$
  and the $c_i$ pairwise commute and commute with all $\delta_j$. Let
  us define $\epsilon_{i,t}=e_i c_i \exp(t\xi_i)$ and $\Gamma_t$ be
  the lattice subgroup of $G$ generated by $\epsilon_{i,t}$. The
  identity map on $G$ induces a diffeomorphism of $\Gamma_0 \backslash
  G$ with $\Gamma_1 \backslash G = \universalcover{M}$. The lattice
  $\Gamma_0$ is generated by $\epsilon_{i,0} = e_ic_i$. Since
  $\Gamma_0$ is abelian, the $c_i$ must fix each $c_j$, $j\neq i$, and
  $c_i$ must send $e_i$ to $\pm e_i$. If $c_i e_i c_i^{-1} = -e_i$,
  then $\epsilon_{i,0}$ is a torsion element in the free abelian group
  $\Gamma_0$, hence it is $1$, absurd. Therefore, $c_i$ fixes $e_i$,
  too. Since $\set{e_i}$ generates a lattice in $S$, each $c_i$
  commutes with $S$. Therefore, $c_i \in \ker(K \longrightarrow \Aut{S})$ for each
  $i$.

  To sum up: let $\Gamma_0^t \lhd \Gamma_0$ be the sublattice
  generated by the pure translations in $\Gamma_0$. Then $\Gamma_0^t
  \backslash G$ is diffeomorphic to $\T^b \times K$, a reductive Lie
  group and it is a smooth covering space of $M$.
\end{proof}

\bibliographystyle{amsplain}
\bibliography{weak-integrability}
\end{document}